\documentclass[11pt,a4paper]{amsart}

\usepackage{amsmath,amssymb}
\usepackage{amsthm}
\usepackage{amssymb} % 数学記号（mathbbなど）を使うため
\usepackage{mathrsfs} % 筆記体（mathscr）を使うため
\usepackage{bm}
\usepackage{enumitem}
\usepackage[margin=25truemm]{geometry}
\usepackage{color}

\theoremstyle{plain} % イタリック体（定理、補題など）
\newtheorem{theorem}{Theorem}[section]
\newtheorem{proposition}[theorem]{Proposition}
\newtheorem{lemma}[theorem]{Lemma}
\newtheorem{corollary}[theorem]{Corollary}

\theoremstyle{definition} % 通常の書体（定義、例など）

\newtheorem{condition}[theorem]{Condition}

\theoremstyle{remark} % 備考など
\newtheorem{remark}[theorem]{Remark}

\numberwithin{equation}{section}

\allowdisplaybreaks

\newcommand{\Prob}{\mathbb{P}}
\newcommand{\E}{\mathbb{E}}

\newcommand{\bz}{\bm{0}}

\newcommand{\N}{\mathbf{N}}
\newcommand{\Z}{\mathbf{Z}}
\newcommand{\Nnn}{\mathbf{Z}_{\geq 0}}
\newcommand{\Np}{\mathbf{Z}_{\geq 1}}
\newcommand{\R}{\mathbf{R}}
\newcommand{\Rd}{\mathbf{R}^d}
\newcommand{\BN}{\mathbb{N}}
\newcommand{\CB}{\mathcal{B}}

\newcommand{\CI}{\mathcal{I}}
\newcommand{\CJ}{\mathcal{J}}
\newcommand{\CK}{\mathcal{K}}

\newcommand{\CN}{\mathcal{N}}
\newcommand{\CP}{\mathcal{P}}
\newcommand{\CQ}{\mathcal{Q}}
\newcommand{\CS}{\mathcal{S}}
\newcommand{\CU}{\mathcal{U}}
\newcommand{\CX}{\mathcal{X}}
\newcommand{\CY}{\mathcal{Y}}

\title[Maximum degree for random geometric graphs]{Maximum degree for random geometric graphs in the connectivity regime and beyond}
\author{Junpei Otsuka}
\address{Graduate School of Science and Technology, University of Tsukuba, Ibaraki, 305-8571, Japan.}
\email{jotsuka@math.tsukuba.ac.jp}

\begin{document}

\begin{abstract}
A random geometric graph is a random graph model whose vertex set is a binomial point process, where two vertices are connected by an edge if each lies within the ball of radius $r$ centered at the other.
In this paper, we investigate the asymptotic fluctuations of the threshold radius required for the maximum degree of the graph to reach a target value.
Assuming this target value grows with the number of vertices $n$ at an order of at least $\log n$ but strictly less than $(\log n)^d$, we show that these fluctuations follow a Gumbel distribution.
We also investigate the spatial configurations of vertices with extremely large degrees, and show that the point process marked by their degrees converges in distribution to a Poisson point process. 
\end{abstract}

% タイトルや著者名の出力
\maketitle

%\tableofcontents

\section{Introduction}

\subsection{Preliminaries}
Let $(X_i)_{i \in \N}$ be independent and identically distributed (i.i.d.) $\Rd$-valued random variables with a bounded density $f$, and let $f_{\max}$ denote its finite essential supremum.
For each $n \in \N$, we define the binomial point process $\CX_n := \sum_{i=1}^n \delta_{X_i}$, which is simple.
For $r > 0$, the random geometric graph $G(\CX_n; r)$ takes $\CX_n$ as its vertex set, where an edge connects any two distinct vertices if each is contained in the ball of radius $r$ centered at the other with respect to an arbitrarily fixed norm on $\Rd$.
For a comprehensive treatment of the model, see the monograph by Penrose~\cite{MR1986198}.
Much attention has been paid to the asymptotic behavior of $G(\CX_n; r_n)$ for a radii $(r_n)_{n \in \N}$ that is typically decreses to $0$.
We also consider random geometric graphs constructed on Poisson point processes.
Letting $N_{\lambda}$ be a Poisson random variable with mean $\lambda$ independent of $(X_i)_{i \in \N}$, we define $\CP_{\lambda} := \sum_{i=1}^{N_{\lambda}} \delta_{X_i}$, which forms a Poisson point process with intensity $\lambda f$.
Following the same construction as before, we define the graph $G(\CP_{\lambda}; r)$.

By a slight abuse of notation, we write $x \in \CX$ instead of $x \in \mathrm{supp}(\CX)$ for a point measure $\CX$ (see Section~\ref{Section.02}).
The maximum degree $\Delta_n$ of the graph $G(\CX_n; r_n)$ is defined as
\begin{equation*}
    \Delta_n := \max \{\deg_n(X) : X \in \CX_n\},
\end{equation*}
where $\deg_n(X)$ denotes the degree of $X$ in $G(\CX_n; r_n)$.
The almost sure limits of $\Delta_n$ have been extensively studied.
If $nr_n^d$ is bounded above by $n^{-\varepsilon}$ for some $\varepsilon > 0$, then $\Delta_n$ remains bounded, implying that no non-trivial almost sure limit for $\Delta_n$ exists.
In all other regimes satisfying $r_n \to 0$, non-trivial almost sure limits (or limits in probability) of $\Delta_n$ have been established.
However, when $nr_n^d$ is of order at least $\log n$, these results additionally require the bounded density $f$ to be compactly supported.
For more detail, see Penrose~\cite[Chapter~6]{MR1986198}.
Regarding the fluctuations of $\Delta_n$, if $nr_n^d$ is bounded above by $n^{-\varepsilon}$ for some $\varepsilon > 0$, then $\Delta_n$ concentrates on at most two bounded consecutive integers~\cite[Theorem~6.3]{MR1986198}.
M\"{u}ller~\cite{MR2501248} showed that for $nr_n^d = n^{o(1)}$ and $nr_n^d = o(\log n)$, a similar concentration holds on diverging consecutive integers.
Within his framework, he extended this concentration for $nr_n^d = o(\log n)$ to other key graph-theoretic quantities, including the clique number, chromatic number, and degeneracy.
The paper~\cite{MR2501248} also contains the fluctuation result for the regime where $nr_n^d$ is of order at least $\log n$, see Remark~\ref{Remark.01}.

For a sequence of positive integers $(k_n)_{n \in \N}$, the threshold radius $S_{k_n}(\CX_n)$ for the maximum degree to reach $k_n$ is defined as
\begin{equation*}
    S_{k_n}(\CX_n) := \inf \{r > 0 : (\text{the maximum degree of $G(\CX_n; r)$}) \geq k_n\}.
\end{equation*}
This is also known as the smallest $k_n$-nearest neighbor link~\cite{MR1986198}.
When $k_n$ is of order at least $\log n$, it has been shown that $S_{k_n}(\CX_n)$ admits a non-trivial almost sure limit, provided that $f$ is bounded and compactly supported~\cite[Chapter~6]{MR1986198}.
Furthermore, the fluctuation results in \cite{otsuka2026limit} show that $S_{k_n}(\CX_n)$ asymptotically follows a Weibull distribution for fixed $k_n$, and a Gumbel distribution when $k_n \to \infty$ with $k_n = o(\log n)$. 
This implies that a non-trivial limit in probability exists only in the latter case.
Note that the fluctuation result in the latter case were established not for an arbitrary bounded $f$, but specifically for a density $f$ satisfying Condition~\ref{Condition.01} of the present paper.
Under the same assumption, and based on M\"{u}ller's two-point concentration result for $\Delta_n$, the previous work~\cite{otsuka2026limit} investigated the point configurations realizing the maximum degree by analyzing those corresponding to the two consecutive integer degrees.
This revealed that the configurations exhibit compound Poisson behavior in the regime where $\Delta_n$ is bounded, and Poisson behavior where $\Delta_n$ diverges.

Regarding the threshold radius $M_{k_n}(\CX_n)$ for the \emph{minimum} degree, which is defined in a similar manner to $S_{k_n}(\CX_n)$, its fluctuations have been studied for fixed $k_n$.
The results for the uniform density $f$ on $[0,1]^d$ or the standard normal density $f$ can be found in \cite[Chapter~8]{MR1986198}.
Recently, Penrose and Yang~\cite{MR4993794} have addressed the case where the density $f$ is continuous and bounded away from $0$ on its support, assuming the support is connected and has a smooth ($d \geq 2$) or polygonal ($d = 2$) boundary.
The fluctuation of $M_{k_n}(\CX_n)$ is asymptotically Gumbel distributed, except under a special balance between $d$ and $k_n \equiv k$, in which case the convergence breaks down as multiple boundary effects of the support of $f$ compete with each other.

In the present paper, we show that $S_{k_n}(\CX_n)$ asymptotically follows a Gumbel distribution when $k_n$ is of order at least $\log n$ but strictly less than $(\log n)^d$.
Additionally, we examine the spatial configurations of vertices with exceptionally high degrees, verifying that they exhibit Poisson behavior.
The results in \cite{MR2501248} show that the fluctuations of $\Delta_n$ are no longer bounded in this regime, unlike the setting in \cite{otsuka2026limit}.
Therefore, it is natural to investigate the convergence of a point process marked by vertex degrees, which requires more involved computations than those in \cite{otsuka2026limit}.
As corollaries of these results, we can recover the fluctuation results for $\Delta_n$ in \cite[Section~5]{MR2501248}.
In fact, we obtain a slight refinement of the result for the regime where $nr_n^d = \Theta(\log n)$, see Remark~\ref{Remark.01}.

\subsection{Notation}\label{Section.02}
Let $\BN(\Rd)$ denote the space of all non-negative integer-valued locally finite measures on $\Rd$.
This space is equipped with the vague topology induced by the following mappings, along with the $\sigma$-algebra generated by this topology:
\begin{equation*}
    \mu \mapsto \int_{\Rd} f \,d\mu, \quad (f \in C_K^+(\Rd)).
\end{equation*}
Here, $C_K^+(\Rd)$ denotes the space of all non-negative, compactly supported continuous functions on $\Rd$.
Although the elements of $\BN(\Rd)$ are technically counting measures, we frequently encounter situations where we want to consider the set inclusion of the point configurations naturally represented by them.
For notational convenience, we define the inclusion $\CX \subset \CY$ between $\CX, \CY \in \BN(\Rd)$ by $\mathrm{supp}(\CX) \subset \mathrm{supp}(\CY)$.
This is a natural definition, since we deal exclusively with simple counting measures in the present work.
For the same reason, we simply write $X \in \CX$ instead of $X \in \mathrm{supp}(\CX)$.

Let $W_0$ denote the principal branch of the Lambert $W$ function, that is, the inverse function of $te^t$ for $t \in [-1,\infty)$.
For $\alpha \in (0,\infty)$, let $Z(\alpha)$ denote a Gumbel random variable scaled by $\alpha$ with $P[Z(\alpha) \leq x] = \exp(-\exp(-\alpha^{-1}x))$.
In particular, $Z(1)$ is a standard Gumbel variable, and $Z(\alpha)$ has the same distribution as $\alpha Z(1)$.
The letters $c, c_0, c_1, c_2$ represent non-random positive constants independent of $n$, and their values may change from line to line.
Throughout this paper, $\theta$ denotes the volume of a $d$-dimensional unit ball.
We omit explicitly stating its dependence on $d$, since it does not affect the calculations.

\section{Main Results}
We determine the limiting distribution of the threshold radius associated with the degree parameters $(k_n)_{n \in \N}$ corresponding to the connectivity and super-connectivity regimes.
We also characterize the limiting spatial point process of the point configuration in these regimes, taking into account the fluctuations of degrees as information marked on each point.

\subsection{Connectivity regime}
In this section, we discuss results concerning the so-called connectivity regime, which corresponds to the condition $nr_n^d = \Theta(\log n)$.

\begin{theorem}\label{Theorem.01}
    Let $d \geq 2$.
    Assume that a sequence of integers $(k_n)_{n \in \N}$ satisfying $k_n/\log n \to b$ for some $b \in (0,\infty)$.
    Suppose that $f$ is the uniform density function on a $d$-dimensional unit cube.
    Define the positive constants
    \begin{equation*}
        a_n := -W_0 \bigg(-\frac{1}{en^{1/k_n}}\bigg), \quad b_n := \frac{1}{en^{1/k_n}} W_0' \bigg(-\frac{1}{en^{1/k_n}}\bigg)
    \end{equation*}
    and let their respective limits as $n \to \infty$ be denoted by $a_{\infty} := -W_0(-e^{-1-1/b})$ and $b_{\infty} := e^{-1-1/b} W_0'(-e^{-1-1/b})$.
    Then, it holds that
    \begin{equation*}
        -n\theta S_{k_n}(\CX_n)^d + a_n k_n + \frac{1}{2} b_n \log k_n + b_{\infty} \log ((1-a_{\infty}) \sqrt{2\pi}) \xrightarrow{d} Z(b_{\infty})
    \end{equation*}
    as $n \to \infty$.
\end{theorem}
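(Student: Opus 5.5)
Since $\{S_{k_n}(\CX_n) \le r\} = \{\Delta(G(\CX_n;r)) \ge k_n\}$, it suffices to fix $x\in\R$ and define $r_n$ by
\begin{equation*}
n\theta r_n^d = a_nk_n + \tfrac12 b_n\log k_n + b_\infty\log((1-a_\infty)\sqrt{2\pi}) - x.
\end{equation*}
We then show $\Prob[\text{no vertex of } G(\CX_n;r_n) \text{ has degree} \ge k_n] \to \exp(-e^{-x/b_\infty})$, which is exactly $\Prob[Z(b_\infty)\le x]$. The strategy has two parts: a first-moment computation showing that the expected number of points $X\in\CX_n$ with $\deg_n(X)\ge k_n$ converges to $e^{-x/b_\infty}$, and a Poisson approximation for this count. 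The approximation will be carried out for the Poissonized process $\CP_n$, using Penrose's Poisson approximation for dissociated/local counts (Stein--Chen via the Palm/Mecke formula), and then transferred to $\CX_n$ by depoissonization.

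\textbf{Step 1: first moment.} By the Mecke formula, $\E\#\{X\in\CP_n:\deg\ge k_n\} = n\int_{[0,1]^d}\Prob[\mathrm{Po}(n|B(y,r_n)\cap[0,1]^d|)\ge k_n]\,dy$. Write $\lambda_n = n\theta r_n^d\sim a_\infty k_n$ with $a_\infty<1$; note $a_\infty<1$ because $W_0$ takes values in $(-1,0)$ on $(-e^{-1},0)$. The tail is then dominated by its first term, so
\begin{equation*}
\Prob[\mathrm{Po}(\lambda)\ge k] \sim \frac{e^{-\lambda}\lambda^k}{k!}\cdot\frac{1}{1-\lambda/k}.
\end{equation*}
Stirling gives
\begin{equation*}
\log\Bigl(n\,\frac{e^{-\lambda}\lambda^k}{k!}\Bigr) = \log n - \lambda + k\log(\lambda/k) + k - \tfrac12\log(2\pi k).
\end{equation*}
The defining relation of $a_n$, namely $a_ne^{-a_n} = e^{-1}n^{-1/k_n}$, shows that at $\lambda = a_nk_n$ the order-$k$ terms cancel exactly against $\log n$. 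Linearizing in $\lambda - a_nk_n$, the derivative of the exponent is $-1 + k/\lambda \approx (1-a_n)/a_n$. One checks that $b_n = a_n/(1-a_n)$ from the formula $W'(z) = W/(z(1+W))$. The corrections $\tfrac12 b_n\log k_n$ and $b_\infty\log((1-a_\infty)\sqrt{2\pi})$ are therefore exactly what is needed to absorb the factors $(2\pi k)^{-1/2}$ and $(1-a)^{-1}$. The outcome is that the interior contribution tends to $e^{-x/b_\infty}$.

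For boundary points, with $y$ at distance $s<r_n$ from the boundary, the ball volume is reduced by a factor of order $(1-cs/r_n)$. The expected count then involves $\lambda$ decreased by $c\lambda s/r_n$, and since $\lambda\sim\log n$ this produces a loss factor $e^{-c' k_n s/r_n}$. The boundary region has measure of order $r_n$, and after integrating over $s$ its contribution is $O(1/\log n)$ relative to the interior. Corners are handled the same way. This is where $d\ge 2$ is irrelevant but the uniform (flat) density is used: the density enters only through $\lambda$.

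\textbf{Step 2: Poisson approximation.} We bound the two terms $b_1$ and $b_2$ of the dependency-graph bound, with neighborhoods $B(y,3r_n)$. The term $b_1$ is of order $(\text{mean})^2 r_n^d\to 0$. The main obstacle is $b_2$: the probability that two points $y,z$ at distance $\le 2r_n$ both have degree $\ge k_n$. The plan is to decompose by the shared lens $B(y,r)\cap B(z,r)$ and condition on the number of points there. For $|y-z| = t r_n$, the non-shared regions have volume of order $t\lambda$, and a large-deviation (Chernoff) bound gives a joint probability at most $p_n e^{-c t k_n}$ times a polynomial factor, where $p_n\approx 1/n$ is the single-point tail. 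Integrating over $z$ gives
\begin{equation*}
n^2 p_n\int r_n^d\, e^{-ctk_n}t^{d-1}\,dt \approx n r_n^d k_n^{-d}\approx k_n^{1-d},
\end{equation*}
which tends to $0$. Here $d\ge2$ is used, and this is where I expect the delicate estimates to lie, namely for very small $t$ (with $t k_n = O(1)$) where the lens nearly coincides with the ball. In that range one uses the crude bound $\Prob\le p_n$, and the volume factor $nr_n^d k_n^{-d}$ still vanishes.

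\textbf{Step 3: depoissonization.} Couple $\CX_n$ between $\CP_{n-n^{3/4}}$ and $\CP_{n+n^{3/4}}$. Monotonicity of degrees in the point set, together with the fact that the change in mean is negligible (a relative change $n^{-1/4}\lambda\to 0$), yields the binomial statement and hence the theorem.
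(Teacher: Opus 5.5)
Your proposal is correct and follows the paper's route. You choose $r_n$ so that the expected number of vertices of degree $\ge k_n$ converges, bound $I_2$ in Penrose's Stein--Chen estimate by the lens/Chernoff argument (rate $k_n^{1-d}$), and de-Poissonize via the $\CP_{n\mp n^{3/4}}$ sandwich. The differences are cosmetic: defining $r_n$ explicitly and linearizing with $b_n=a_n/(1-a_n)$ is equivalent to the paper's Lambert-$W$ Taylor expansion, and bounding the full tail $\{\deg\ge k_n\}$ directly replaces the paper's truncation lemma.

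The only slip is your Step~1 boundary heuristic, which has the geometry backwards: the lost volume is largest at distance $0$ and is not linear in $s$. It is unnecessary anyway, since the $r_n$-strip has measure $O(r_n)$ and the integrand there is at most its interior value. The same crude bound also disposes of boundary pairs in $I_2$ at cost $O(k_nr_n)$.
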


We can extend the above result to densities having a strict maximum at the origin and sufficient smoothness nearby.
More precisely, we impose the following condition:
\begin{condition}\label{Condition.01}
    The density $f$ is the uniform density on $[-1/2,1/2]^d$ or satisfies following two conditions:
    \begin{itemize}
        \item For some $\rho_0 > 0$ and $m \in \Np$, $f$ is of class $C^{2m}$ on $B(\bz ; \rho_0)$.
        Further, the origin $\bz$ is the unique global maximum point of $f$ with $f(\bz) = f_{\max}$.
        In addition, $\min \{\ell > 0 : \partial^{\ell} f(\bz) \neq 0\} = 2m$ and the $2m$-th derivative $\partial^{2m} f(\bz)$ is a negative definite $2m$-linear form on $\Rd$.
        \item There exist $\rho_1 \in (0,\rho_0)$ and $\varepsilon_0 > 0$ such that $f(x) \leq (1-\varepsilon_0) f_{\max}$ for all $x \in \Rd \setminus B(\bz; \rho_1)$.
    \end{itemize}
\end{condition}

For simplicity, we write $\partial^{2m} f(\bz)[z^{\otimes 2m}]$ instead of $\partial^{2m} f(\bz)[z, \cdots, z]$ in what follows.
\begin{theorem}\label{Theorem.02}
    Let $d \geq 2$.
    Let $(k_n)_{n \in \N}$ be a sequence of integers satisfying $k_n/\log n \to b$ for some $b \in (0,\infty)$.
    Suppose that $f$ is a non-uniform density function satisfying Condition~\ref{Condition.01}.
    Then, it holds that
    \begin{equation*}
        -n\theta f_{\max} S_{k_n}(\CX_n)^d + a_n k_n + \bigg(\frac{1+(d/m)}{2}\bigg) b_n \log k_n + b_{\infty} \log \frac{(1-a_{\infty})^{1+d/(2m)} \sqrt{2\pi}}{\gamma f_{\max}^{1+d/(2m)}} \xrightarrow{d} Z(b_{\infty})
    \end{equation*}
    as $n \to \infty$, where we define the constant $\gamma := \int_{\Rd} \exp(\partial^{2m}f(\bz)[z^{\otimes 2m}]/((2m)!)) \,dz$.
\end{theorem}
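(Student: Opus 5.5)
Throughout, fix $x\in\R$ and set $r_n := r_n(x)$ by
\begin{equation*}
    n\theta f_{\max} r_n^d = a_n k_n + \tfrac{1+d/m}{2}\, b_n\log k_n + b_\infty \log\frac{(1-a_\infty)^{1+d/(2m)}\sqrt{2\pi}}{\gamma f_{\max}^{1+d/(2m)}} - x .
\end{equation*}
Since $S_{k_n}(\CX_n) > r_n$ is equivalent to no vertex of $G(\CX_n;r_n)$ having degree $\ge k_n$, the claim reduces to showing
\begin{equation*}
    \Prob[\Delta_n < k_n] \to \exp(-e^{-x/b_\infty}),
\end{equation*}
where $\Delta_n$ is computed at radius $r_n$. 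The plan is to Poissonize first (replace $\CX_n$ by $\CP_n$ and compare the two via the standard coupling $\CP_{n-n^{3/4}}\subset\CX_n\subset\CP_{n+n^{3/4}}$, the shift being absorbed since it moves $n\theta f_{\max}r_n^d$ by $o(1)$). Then I study the count
\begin{equation*}
    W_n:=\sum_{X\in\CP_n}\mathbf 1\{\CP_n(B(X;r_n))-1\ge k_n\}.
\end{equation*}
I would show $W_n$ is asymptotically Poisson. Because high-degree vertices cluster (a neighbour of a high-degree vertex is itself likely to have high degree), I would count instead local maximizers, or equivalently thin $W_n$ to one representative per cluster, for instance the lexicographically smallest point of a high-degree cluster within distance $2r_n$. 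The total variation bound then comes from the Barbour–Holst–Janson / Penrose dissociated coupling bound.

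The first-moment computation is the heart of the matter. By Mecke,
\begin{equation*}
    \E W_n = n\int f(y)\,\Prob[\mathrm{Po}(n\mu_n(y))\ge k_n]\,dy,
\end{equation*}
where $\mu_n(y)=\int_{B(y;r_n)}f$. Near the origin, Condition~\ref{Condition.01} gives
\begin{equation*}
    \mu_n(y)=\theta r_n^d\big(f_{\max}+\partial^{2m}f(\bz)[y^{\otimes 2m}]/(2m)!+O(r_n^2|y|^{2m-2}+|y|^{2m+1})\big).
\end{equation*}
Set $\lambda=n\theta f_{\max}r_n^d\approx a_nk_n$ with $a_n<1$. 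The Poisson tail with $k_n>\lambda$ satisfies
\begin{equation*}
    \Prob[\mathrm{Po}(\lambda')\ge k]\sim \frac{e^{-\lambda'}\lambda'^k}{k!}\cdot\frac{1}{1-\lambda'/k}
\end{equation*}
uniformly for $\lambda'/k$ near $a_\infty$. Stirling contributes $1/\sqrt{2\pi k}$, producing the $\tfrac12 b_n\log k_n$ and $\sqrt{2\pi}(1-a_\infty)$ terms.

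Writing $\lambda'=\lambda(1-\delta(y))$, the log-density $k\log\lambda'-\lambda'$ has derivative $k/\lambda-1=1/a-1$ in $\lambda'$, so
\begin{equation*}
    \Prob[\mathrm{Po}(\lambda')\ge k] \approx \Prob[\mathrm{Po}(\lambda)\ge k]\,\exp\!\big((1-a_\infty)\lambda\,\partial^{2m}f(\bz)[y^{\otimes2m}]/(f_{\max}(2m)!)\big).
\end{equation*}
Substituting $z=((1-a_\infty)\lambda/f_{\max})^{1/(2m)}y$ turns the integral into $\gamma\,(f_{\max}/((1-a_\infty)\lambda))^{d/(2m)}$. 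This accounts for the extra $\tfrac{d}{2m}b_n\log k_n$ and the $(1-a_\infty)^{d/(2m)}f_{\max}^{-d/(2m)}$ factors. Away from $B(\bz;\rho_1)$, the gap $\varepsilon_0$ makes the contribution exponentially smaller in $k_n$. Tuning $a_n$ through $W_0$ so that $n e^{-\lambda}\lambda^{k}/k!$ has the correct exponential order, and linearizing in $x$ (the derivative of the $\lambda$-dependence gives the factor $1/b_\infty$), yields $\E W_n\to e^{-x/b_\infty}$. The reverse Taylor bounds and dominated convergence on the rescaled region $|y|\lesssim (\log n)^{-1/(2m)+\epsilon}$ make this rigorous.

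The main obstacle will be the second-order (dependence) term, i.e.\ controlling
\begin{equation*}
    \E\sum_{X\ne Y,\ |X-Y|\le 3r_n}\mathbf 1\{\deg X\ge k_n,\ \deg Y\ge k_n\}
\end{equation*}
(after thinning, the probability that two cluster representatives sit close). Since $k_n\asymp\log n$, two nearby points with both degrees $\ge k_n$ share most neighbours, so this sum is \emph{not} small. This is exactly why thinning to cluster representatives is needed. I would then show that the count of pairs at distance in $[\epsilon r_n, 3r_n]$ both exceeding $k_n$ is $o(1)$, by a large-deviation bound on the joint Poisson counts of $B(X;r_n)\setminus B(Y;r_n)$, $B(X;r_n)\cap B(Y;r_n)$, and $B(Y;r_n)\setminus B(X;r_n)$. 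This is the point where $d\ge2$ and the norm's geometry enter: the symmetric difference volume is $\asymp |X-Y|\,r_n^{d-1}$, which must beat the $\log k_n$ corrections. Finally, for the first moment of representatives, I need the probability that a high-degree point is the representative to tend to $1$ at the relevant scale. I would establish this by showing that, conditioned on $\deg X\ge k_n$, the overshoot is geometric with ratio $a_\infty$, and that nearby points lose degree at rate proportional to their displacement. If that conditional probability instead converges to a constant $<1$, it must be incorporated into the constant. I would check this against Theorem~\ref{Theorem.01}'s constant to decide.
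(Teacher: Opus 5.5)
\textbf{Verdict: genuine gap.} Your first-moment strategy and Poissonization step follow the paper's route. The dependence step, however, rests on a false claim, and it leaves the result undecided.

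\textbf{The false claim.} You assert that $\E\sum_{X\neq Y,\ \|X-Y\|\le 3r_n}\mathbf 1\{\deg X\ge k_n,\ \deg Y\ge k_n\}$ is \emph{not} small, and you thin to cluster representatives because of it. You then leave open whether the probability that a high-degree point is a representative tends to $1$ or to a constant that would change the centering. As written, the proposal therefore cannot confirm the stated constant $b_\infty\log\frac{(1-a_\infty)^{1+d/(2m)}\sqrt{2\pi}}{\gamma f_{\max}^{1+d/(2m)}}$.

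In fact, for $d\ge2$ the pair term tends to $0$. This is the estimate $I_2(n)\le c(k_n^{1-d}+k_nr_n)$ in the proof of Lemma~\ref{Lemma.02}, together with its non-uniform modification. The argument runs as follows.
\begin{itemize}
\item Condition on $B(X;r_n)$ holding about $k_n$ points. A point $Y$ at distance $tr_n$ then has degree roughly $\deg X-\mathrm{Bi}(\deg X,F_t)+\mathrm{Po}(a_\infty k_nF_t)$.
\item Here $F_t$ is the normalized volume of $B(Y;r_n)\setminus B(X;r_n)$, and it satisfies $F_t\ge ct$.
\item Since $a_\infty<1$, the mean of this degree sits $(1-a_\infty)k_nF_t$ below $k_n$. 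The binomial and Poisson tail bounds then give probability at most $e^{-c_1k_nt}$ that $Y$ also lands in the degree window.
\item Integrating against the $\asymp k_nt^{d-1}\,dt$ expected neighbours gives $k_n\int_0^3 t^{d-1}e^{-c_1k_nt}\,dt=O(k_n^{1-d})\to0$.
\end{itemize}
Only points within distance $O(r_n/k_n)$ of $X$ share almost all of its neighbours, and there are $O(k_n^{1-d})$ of those in expectation. This is where $d\ge2$ enters, not in a comparison of the symmetric-difference volume with $\log k_n$ corrections.

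\textbf{What this means for your plan.} No thinning is needed. Penrose's bound (Lemma~\ref{Lemma.01}) applies directly to the number of vertices with degree in $[k_n+J_1,k_n+J_2]$, and the full tail follows from the truncation Lemma~\ref{Lemma.07}. The ``representative probability'' you worry about tends to $1$, and that is exactly what confirms the stated constant. The ingredient you list at the end, that nearby points lose degree at a rate proportional to their displacement, is precisely what proves this. It refutes your clustering claim rather than supporting it.

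\textbf{A separate slip in the first moment.} The exponent should be $(k_n/\lambda-1)(\lambda'-\lambda)=(1-a_\infty)k_n\,\partial^{2m}f(\bz)[y^{\otimes 2m}]/((2m)!\,f_{\max})$, not $(1-a_\infty)\lambda(\cdots)$. Since $\lambda\approx a_\infty k_n$, your version introduces a spurious factor $a_\infty^{-d/(2m)}$ into the constant. The correct rescaling is $y=k_n^{-1/(2m)}z$, as in the paper. The integral then equals $\gamma\,(f_{\max}/((1-a_\infty)k_n))^{d/(2m)}$, and after multiplying by the factor $f_{\max}/(1-a_\infty)$ it gives the paper's $\gamma\,(f_{\max}/(1-a_\infty))^{1+d/(2m)}k_n^{-d/(2m)}$.
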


For a chosen sequence of radii $(r_n)_{n \in \N}$, we write $\Phi_n \in \BN(\Rd \times \Z)$ for the degree-marked point configuration of vertices in $G(\CX_n; r_n)$, spatially scaled according to the chosen density function $f$, with the degree marks centered at $k_n$.
The precise definition will be given within the statement of the following theorem.
Let $W_{j,n}'$ denote the number of vertices with degree $j$ in $G(\CP_n; r_n)$.

\begin{theorem}\label{Theorem.03}
    Let $d \geq 2$.
    Let $(k_n)_{n \in \N}$ be a sequence of integers satisfies $k_n/\log n \to b$ for some $b \in (0,\infty)$.
    Suppose that $f$ is a density function satisfying Condition~\ref{Condition.01}.
    Let $(r_n)_{n \in \N}$ be a sequence of radii satisfies $\E[\sum_{j=k_n}^\infty W_{j,n}'] \to \beta$ for some $\beta \in (0,\infty)$.
    Then, in the uniform case, $\Phi_n := \sum_{X \in \CX_n} \delta_{(X,~ \deg_n(X) - k_n)}$ converges in distribution, with respect to the vague topology, to a Poisson point process $\CQ$ on $\Rd \times \Z$ with intensity measure $\Lambda$ uniquely determined by
    \begin{equation*}
        \Lambda(A \times B) := \beta \mathrm{Leb}(A \cap [-1/2,1/2]^d) \cdot G_b(B) \quad (A \in \CB(\Rd), B \subset \Z),
    \end{equation*}
    where $G_b$ denotes the measure on $\Z$ determined by its values on singletons:
    \begin{equation*}
        G_b(\{m\}) := (-W_0(-e^{-1-1/b}))^m (1+W_0(-e^{-1-1/b}))
    \end{equation*}
    for each $m \in \Z$.
    In the non-uniform case, $\Phi_n := \sum_{X \in \CX_n} \delta_{(k_n^{1/(2m)} X,~ \deg_n(X) - k_n)}$ converges in distribution, with respect to the vague topology, to a Poisson point process $\CQ$ on $\Rd \times \Z$ with intensity measure $\Lambda$ uniquely determined by
    \begin{equation*}
        \Lambda(A \times B) := \beta \gamma^{-1} \int_A \exp\bigg(\frac{\partial^{2m}f(\bz)[z^{\otimes 2m}]}{(2m)!}\bigg) \,dz \cdot G_b(B) \quad (A \in \CB(\Rd), B \subset \Z).
    \end{equation*}
\end{theorem}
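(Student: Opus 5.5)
We prove Theorem~\ref{Theorem.03} by first establishing the statement for the Poissonized process $\Phi_n' := \sum_{X \in \CP_n} \delta_{(\text{scaled } X,\ \deg(X) - k_n)}$, with degrees computed in $G(\CP_n; r_n)$, and then de-Poissonizing.

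Since $\Rd \times \Z$ is locally compact and the limit $\CQ$ is a simple Poisson process with diffuse spatial part, Kallenberg's theorem reduces vague convergence in distribution to two facts, verified over a dissecting semiring of sets $A \times \{m\}$ with $A$ a bounded box. The first is convergence of means: $\E[\Phi_n'(A\times\{m\})] \to \Lambda(A\times\{m\})$. The second is the void-probability condition: $\Prob[\Phi_n'(U) = 0] \to \exp(-\Lambda(U))$ for finite unions $U$ of such sets.

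\textbf{Step 1 (intensity).} By the Mecke formula,
\[
\E[\Phi_n'(A\times\{m\})] = n\int \Prob\bigl[\mathrm{Poi}(n\mu_n(x)) = k_n+m\bigr]\, \mathbf 1\{\text{scaled } x\in A\}\, f(x)\,dx,
\]
where $\mu_n(x) = \int_{B(x;r_n)} f$. We write $\lambda_n := n\theta f_{\max} r_n^d$. The hypothesis $\E[\sum_{j\ge k_n} W_{j,n}'] \to \beta$ forces $\lambda_n = a_n k_n + O(\log k_n)$, essentially the centering already identified in Theorems~\ref{Theorem.01}--\ref{Theorem.02}, so $\lambda_n/k_n \to a_\infty$. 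The ratio of consecutive Poisson weights is
\[
\frac{\Prob[\mathrm{Poi}(\lambda)=j+1]}{\Prob[\mathrm{Poi}(\lambda)=j]} = \frac{\lambda}{j+1} \to a_\infty
\]
uniformly for $j = k_n + O(1)$. Since $a_\infty<1$, the tail $\sum_{j\ge k_n}$ is geometric, and
\[
\Prob[\mathrm{Poi}=k_n+m] \Big/ \Prob[\mathrm{Poi}\ge k_n] \to a_\infty^m(1-a_\infty) = G_b(\{m\}),
\]
using $a_\infty = -W_0(-e^{-1-1/b})$.

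The spatial profile comes from the position dependence of $\mu_n(x)$.
\begin{itemize}
\item In the uniform case $\mu_n$ is constant on the interior. The boundary layer has volume $O(r_n)$, and there $\mu_n$ is smaller by a constant factor, giving an exponentially smaller contribution in $k_n$. The limit is therefore $\beta\,\mathrm{Leb}(A\cap[-1/2,1/2]^d)\,G_b$.
\item In the non-uniform case we use Condition~\ref{Condition.01}: $\mu_n(x) \approx \theta r_n^d\bigl(f_{\max} + \partial^{2m}f(\bz)[x^{\otimes 2m}]/(2m)!\bigr)$ for $|x| \lesssim k_n^{-1/(2m)}$. The Poisson probability depends on $x$ through the factor $(\mu_n(x)/\mu_n(\bz))^{k_n}\approx \exp\bigl(\partial^{2m}f(\bz)[z^{\otimes 2m}]/((2m)!)\bigr)$ with $z = k_n^{1/(2m)}x$, up to lower-order corrections. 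Points outside $B(\bz;\rho_1)$ contribute $e^{-ck_n}$.
\end{itemize}
Normalizing by the total mass $\beta$ yields $\beta\gamma^{-1}\int_A \exp(\cdots)\,dz\cdot G_b(\{m\})$.

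\textbf{Step 2 (void probabilities).} We apply a Poisson approximation, namely the Arratia--Goldstein--Gordon bound or Penrose's Theorem 6.7-type dependency-graph bound, to the counts of points of $\CP_n$ in scaled $A$ with degree in the prescribed finite set. Points at distance greater than $2r_n$ have independent indicators, so the dependency neighbourhood is $B(x;3r_n)$. The error then consists of two terms:
\begin{itemize}
\item $b_1 \lesssim \E[\text{count}]^2\, r_n^d/\mathrm{vol}$, which tends to $0$ since the relevant region in unscaled coordinates has volume $\gg r_n^d$;
\item $b_2$, the sum over pairs $x,y$ with $|x-y|\le 3r_n$ of joint probabilities that both have degree about $k_n$.
\end{itemize}
\textbf{The main obstacle is $b_2$.} For close pairs the joint event requires the union $B(x;r_n)\cup B(y;r_n)$ to contain roughly $2k_n - |B(x)\cap B(y)|$ points. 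When $|x-y| = s r_n$, the resulting exponent is worse than that of a single point by a factor $\exp(-c\,k_n \min(s,1))$. We plan to split into $s\le k_n^{-1}\log^2 k_n$ and larger $s$. In the first range we use the volume factor $(s r_n)^d n \cdot(\ldots)$ together with the fact that $x,y$ then share almost all neighbours, and we need $d\ge2$ to beat the $1/k_n$ scaling. In the second range we use an explicit binomial-tail bound on the symmetric difference $B(x)\triangle B(y)$, which has volume $\ge c\,s r_n^d$. This mirrors the corresponding estimate for the threshold radius. The hypothesis $k_n = \Theta(\log n)$ makes the gain $e^{-ck_n s}$ comparable to the polynomial factors, and the careful balance of these terms is where we expect most of the work.

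\textbf{Step 3 (de-Poissonization).} We couple $\CX_n$ with $\CP_n$ by adding or removing $|N_n - n| = O_P(\sqrt n)$ points. Each added or removed point changes the degree of only $O(k_n)$ vertices. The expected number of vertices whose degree lies in a window $k_n+O(1)$ and which are within $r_n$ of one of these $O(\sqrt n)$ perturbed points is
\[
O\bigl(\sqrt n\, r_n^d\cdot n\cdot p_n\bigr) = O\bigl(\sqrt n\, k_n\cdot n^{-1}\bigr) \to 0,
\]
since $n p_n = O(1)$ by Step 1. Hence $\Phi_n - \Phi_n'$ vanishes on compacts in probability, and the theorem follows.
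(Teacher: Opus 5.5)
The overall plan matches the paper's proof. You Poissonize, check Kallenberg's criterion with a dependency-graph Poisson approximation whose error terms are the $I_1(n)$ and $I_2(n)$ already bounded in Lemma~\ref{Lemma.02}, and then de-Poissonize. Your variations are all fine:
\begin{itemize}
\item You use the means-plus-void-probabilities form of Kallenberg's theorem with a univariate bound for $\Phi_n'(U)$. The paper instead uses Lemma~\ref{Lemma.03} with the multivariate Lemma~\ref{Lemma.04}. Your route still needs the same extension to position-dependent degree sets that Lemma~\ref{Lemma.04} supplies.
\item You de-Poissonize by counting window-degree vertices near the perturbed points. The paper uses the sandwich $\CP_n^-\subset\CX_n\subset\CP_n^+$ of Proposition~\ref{Proposition.01}.
\item Your split of $I_2$ at $s\approx k_n^{-1}(\log k_n)^2$ works: the near range costs $O(k_n^{1-d}(\log k_n)^{2d})$ by the trivial bound. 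The paper instead integrates $e^{-c_1k_n\|z\|_2}$ directly and gets $O(k_n^{1-d})$.
\end{itemize}
One correction to your diagnosis: the prefactor in $I_2$ is $O(k_n)$, not polynomial in $n$. What $k_n=\Theta(\log n)$ really supplies is $a_\infty<1$, and that is the source of the linear rate $e^{-ck_ns}$.

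The genuine problem is Step~1 in the non-uniform case. The $x$-dependence of $\Prob[\mathrm{Po}(n\mu_n(x))=k_n+j]$ does not enter only through $(\mu_n(x)/\mu_n(\bz))^{k_n}$. The factor $e^{-n\mu_n(x)}$ varies on the same scale and is not a lower-order correction, and your computation also drops a $1/f_{\max}$. Write $\lambda_n:=n\theta f_{\max}r_n^d$, $P(z):=\partial^{2m}f(\bz)[z^{\otimes 2m}]/(2m)!$ and $x=k_n^{-1/(2m)}z$. Then $n\mu_n(x)=\lambda_nf(x)/f_{\max}+o(1)$, and hence
\[
\frac{\Prob[\mathrm{Po}(n\mu_n(x))=k_n+j]}{\Prob[\mathrm{Po}(\lambda_n)=k_n+j]}=\Big(\frac{f(x)}{f_{\max}}\Big)^{k_n+j}e^{\lambda_n(1-f(x)/f_{\max})}(1+o(1))\longrightarrow\exp\Big(\frac{1-a_\infty}{f_{\max}}P(z)\Big).
\]
This holds because $(f(x)/f_{\max})^{k_n}\to e^{P(z)/f_{\max}}$ while $e^{\lambda_n(1-f(x)/f_{\max})}\to e^{-a_\infty P(z)/f_{\max}}$.

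So Step~1, done correctly, gives the spatial density $c^{d/(2m)}\gamma^{-1}e^{cP(z)}$ with $c=(1-a_\infty)/f_{\max}$. This is exactly the profile the paper integrates in its expectation computation for Theorem~\ref{Theorem.02}. It equals the displayed $\gamma^{-1}e^{P(z)}$ only when $1-a_\infty=f_{\max}$. Equivalently, the displayed $\Lambda$ is the limit under the spatial scaling $(ck_n)^{1/(2m)}X$, not $k_n^{1/(2m)}X$.

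The paper's proof of the non-uniform case only points to earlier work, so it does not resolve this. Still, your argument as written cannot reach the stated intensity. You need to keep the factor $e^{-n\mu_n(x)}$, and then either change the spatial scaling or correct the exponent and normalization in $\Lambda$.

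In the uniform case your Step~1 is fine, with one correction to the reasoning. The boundary layer is negligible because it has volume $O(r_n)$ and the intensity of window-degree vertices is $O(1)$. It is not because $\mu_n$ drops there by a uniform constant factor, which is false near the inner edge of the layer.
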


\subsection{Super-connectivity regime}
In this section, we focus on the super-connectivity regime, which is characterized by the condition $nr_n^d/\log n \to \infty$.

\begin{theorem}\label{Theorem.04}
    Let $d \geq 2$.
    Let $(k_n)_{n \in \N}$ be a sequence of integers satisfies $\log n \ll k_n \ll (\log n)^d$.
    Suppose that $f$ is a density function satisfying Condition~\ref{Condition.01}.
    Then, there exist two sequences of real numbers $(a_n)_{n \in \N}$ and $(b_n)_{n \in \N}$ such that
    \begin{equation*}
        \frac{-n\theta f_{\max} S_{k_n}(\CX_n)^d + a_n}{b_n} \xrightarrow{d} Z(1)
    \end{equation*}
    holds as $n \to \infty$.
    Furthermore, we can choose $b_n := \sqrt{k_n/(2\log n)}$ and the sequence $(a_n)_{n \in \N}$ exhibits the following asymptotic behavior up to the third order as $n \to \infty$:
    \begin{equation*}
        a_n = k_n - \sqrt{2k_n \log n} + \bigg(\frac{2}{3}\log n + \sqrt{\frac{k_n}{8\log n}} \bigg\{\bigg(1+\frac{d}{2m}\bigg)\log \log n + \frac{d}{4m} \log k_n\bigg\}\bigg) (1+o(1)).
    \end{equation*}
    Here, in the uniform case, the degree of flatness near the maximum point is treated as $m = \infty$; thus, by convention, we set $m^{-1} = 0$.
\end{theorem}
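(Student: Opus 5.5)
We reduce the statement to a Poisson approximation for the number of high-degree vertices. Since $\{S_{k_n}(\CX_n) > r\}$ is the event that no vertex of $G(\CX_n;r)$ has degree at least $k_n$, it suffices to show the following. Let $r_n = r_n(x)$ be defined by $n\theta f_{\max} r_n^d = a_n - b_n x$. Then
\begin{equation*}
\Prob\Big[\textstyle\sum_{j\ge k_n} W_{j,n} = 0\Big] \to \exp(-e^{-x}),
\end{equation*}
where $W_{j,n}$ counts the vertices of degree $j$ in $G(\CX_n;r_n)$. We first Poissonize, comparing $\CX_n$ with $\CP_{n\pm n^{3/4}}$ by the usual coupling. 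This changes $n\theta f_{\max}r_n^d$ by a relative error $O(n^{-1/4})$, which is negligible against $b_n$. On the Poisson side, a dependency-graph Poisson approximation (Arratia--Goldstein--Gordon, or Penrose's Theorem 2.1) applied after discretizing into cubes of side about $r_n$ reduces everything to two facts. The first is that the mean $\E[\sum_{j\ge k_n}W'_{j,n}]$ converges to $e^{-x}$. The second is that the pair-correlation terms vanish, i.e.\ the expected number of pairs of points within distance $3r_n$ that both have degree $\ge k_n$ tends to $0$.

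\textbf{First moment.} By Mecke's formula,
\begin{equation*}
\E\Big[\textstyle\sum_{j\ge k_n}W'_{j,n}\Big]=n\int f(y)\,\Prob[\mathrm{Po}(\mu_n(y))\ge k_n]\,dy, \qquad \mu_n(y)=n\int_{B(y;r_n)}f.
\end{equation*}
Writing $\lambda_n = n\theta f_{\max}r_n^d$, we have $\lambda_n/k_n \to 1$ because $k_n\gg\log n$. The Poisson tail is in the moderate-deviation regime, with $(k_n-\lambda_n)/\sqrt{\lambda_n}\sim\sqrt{2\log n}\ll\sqrt{k_n}$ since $k_n\ll(\log n)^d$ (the weaker condition $\log n\ll k_n$ already suffices here). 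The refined Stirling/Bahadur--Rao expansion gives
\begin{equation*}
\Prob[\mathrm{Po}(\lambda)\ge k]\sim \frac{\sqrt{\lambda}}{(k-\lambda)\sqrt{2\pi}}\exp\big(-k\log(k/\lambda)+k-\lambda\big).
\end{equation*}
For the uniform case, $\mu_n(y)=\lambda_n$ on the interior of the cube. The boundary region has volume $O(r_n)$ and a smaller mean, so its contribution is negligible, and this is where $d\ge2$ and $k_n\ll(\log n)^d$ are used to control the boundary layer. In the non-uniform case, Taylor's expansion under Condition~\ref{Condition.01} gives
\begin{equation*}
\mu_n(y)=\lambda_n\big(1+\partial^{2m}f(\bz)[y^{\otimes 2m}]/((2m)!f_{\max})+O(r_n^2 + |y|^{2m+1})\big).
\end{equation*}
The $r_n^2$ error is harmless because $k_n r_n^2\to0$, which again uses $k_n\ll(\log n)^d$ since $r_n^d\approx k_n/n$. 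The derivative of the exponent in $\mu$ is about $\log(k_n/\lambda_n)\,k_n \sim \sqrt{2k_n\log n}$. A Laplace integral over $y$ at scale $(\sqrt{k_n\log n})^{-1/(2m)}$ then produces the extra factor $\gamma\,(\sqrt{2k_n\log n}/f_{\max})^{-d/(2m)}$, up to constants. Away from $B(\bz;\rho_1)$, the drop of $f$ by the factor $1-\varepsilon_0$ kills the contribution exponentially in $k_n$.

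\textbf{Solving for the centering.} Setting the resulting expression equal to $e^{-x}$ and solving for $\lambda_n$ asymptotically gives the constants. Write $\lambda_n=k_n-\delta$. The leading balance $\delta^2/(2k_n)\approx\log n$ gives $\delta\approx\sqrt{2k_n\log n}$. The cubic term of $k\log(k/\lambda)$ gives the $\frac23\log n$ correction. The prefactors $\sqrt{\lambda}/(k-\lambda)$ and the Laplace factor yield the $\log\log n$ and $\log k_n$ terms with the coefficient $\sqrt{k_n/(8\log n)}$. Finally, a shift of $\lambda_n$ by $b_n x$ multiplies the mean by $e^{-x}(1+o(1))$, because $\partial(\text{exponent})/\partial\lambda\approx\delta/k_n=\sqrt{2\log n/k_n}=b_n^{-1}$. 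This identifies $b_n$ and an $a_n$ with the stated third-order expansion.

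\textbf{Main obstacle: the second-moment term.} We must bound the probability that two points $y,z$ with $|y-z|\le 3r_n$ both have degree $\ge k_n$. The degrees are highly correlated when $|y-z|\ll r_n$. We plan to split the count by the shared region $B(y;r_n)\cap B(z;r_n)$ and the two private regions. For $|y-z|=s r_n$, the private regions have mean of order $\lambda_n s$ (for $d\ge2$, the volume of the symmetric difference is linear in $s$). Conditioning on the shared count $N$, both private counts must exceed $k_n-N$. Optimizing over $N$ should give a joint probability of roughly
\begin{equation*}
p_n\cdot\exp\big(-c\min(s\sqrt{k_n\log n},\,\log n)\big),
\end{equation*}
where $p_n$ is the single-point probability. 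Integrating over $z$ then gives $n p_n\cdot n r_n^d\int e^{-cs\sqrt{k_n\log n}}s^{d-1}\,ds \approx k_n(k_n\log n)^{-d/2}\to0$ as $d\ge2$, plus the far region with factor $n^{-c}k_n\to0$. Making this rigorous uniformly, with the Chernoff-type bounds valid in the moderate-deviation regime, is where most of the work lies. The non-uniform case also requires restricting to $|y|\lesssim k_n^{-1/(2m)}\cdot\mathrm{polylog}$, where the analysis proceeds as in the uniform case.
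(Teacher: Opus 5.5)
Your overall architecture is the paper's. It consists of the monotone reduction to $\Prob[\sum_{j\ge k_n}W_{j,n}=0]$, sandwiching $\CX_n$ between Poisson processes of intensity $n\pm n^{3/4}$, and a Mecke/Stirling first moment with the geometric factor $\varepsilon_n^{-1}$ and a Laplace integral at scale $(\varepsilon_nk_n)^{-1/(2m)}$. It closes with a dependency-graph bound whose only delicate term is the pair term. That pair term is exactly where $k_n\ll(\log n)^d$ is needed, and your heuristic decay rate for it is wrong.

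Given $\deg(y)\ge k_n$, the counts are effectively inflated by $k_n/\lambda_n\approx 1+\varepsilon_n$. So $B(y;r_n)\setminus B(z;r_n)$ carries about $k_nF$ points, where $F\asymp s$ is its normalized volume, while $B(z;r_n)\setminus B(y;r_n)$ has mean $(1-\varepsilon_n)k_nF$. Raising the latter count by $\varepsilon_nk_nF$ is a \emph{moderate} deviation of a count of size $k_nF$. Its cost is $\asymp(\varepsilon_nk_nF)^2/(k_nF)=\varepsilon_n^2k_nF\asymp s\log n$, not the linear cost $s\sqrt{k_n\log n}$ you wrote. So the joint probability is $p_n e^{-cs\log n}$, which is what the paper proves: $h_n(x,y)\le cn^{-1}\exp(-c_1\varepsilon_n^2k_nF_{y-x})\le cn^{-1}\exp(-c_1F_{y-x}\log n)$.

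This rate is sharp. For $s\lesssim 1/\log n$ the shift $\varepsilon_nk_nF$ is comparable to the standard deviation $\sqrt{k_nF}$, so the joint probability is $\Theta(p_n)$. Hence the pair term is of order $np_n\cdot nr_n^d\int_0^3e^{-cs\log n}s^{d-1}\,ds\asymp k_n(\log n)^{-d}$, which tends to $0$ precisely because $k_n\ll(\log n)^d$. Your bound $k_n(k_n\log n)^{-d/2}$ would make it vanish for every subpolynomial $k_n\gg\log n$. In fact it diverges once $k_n\gg(\log n)^d$, because high-degree vertices cluster (cf.\ the remark after Lemma~\ref{Lemma.06}).

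Consistent with this, the places where you invoke the upper hypothesis are misattributed. The boundary layer has volume $O(r_n)$ and smaller mean, so it contributes only $O(r_n)$ to the first moment. Likewise, $k_nr_n^2\to0$ needs only subpolynomial growth of $k_n$.

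To repair the argument, redo the pair estimate with moderate-deviation (Chernoff) bounds, for instance as the paper does:
\begin{enumerate}
\item condition on $\CP(B_y)$ at the top of a degree window of width $O(\varepsilon_n^{-1})$;
\item bound the binomial thinning of the shared part and the Poisson private part using $H(1\pm\varepsilon_n/4)\asymp\varepsilon_n^2$;
\item treat separations with $F_{y-x}\lesssim(\varepsilon_n^2k_n)^{-1}\asymp(\log n)^{-1}$ by the trivial bound $h_n\le c/n$ on a set of rescaled volume $\asymp(\log n)^{-d}$;
\item recover the full tail by truncation (Lemma~\ref{Lemma.07}).
\end{enumerate}

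One further caution, since you only assert the displayed $a_n$ without computing it. In your centering computation, $\log n-k_n(-\varepsilon_n-\log(1-\varepsilon_n))=\frac{d}{4m}\log k_n+\frac12(1+\frac{d}{2m})\log\log n+O(1)$ gets multiplied by $\sqrt{k_n/(2\log n)}=2\sqrt{k_n/(8\log n)}$. So the coefficient of $\log k_n$ inside the braces comes out as $\frac{d}{2m}$, not $\frac{d}{4m}$. This discrepancy is invisible unless $f$ is non-uniform and $\sqrt{k_n/\log n}\,\log\log n\gtrsim\log n$.
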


\begin{remark}
    As a simple example in the uniform case of Theorem~2.5, let us choose $k_n = \lfloor (\log n)^3 \rfloor$ for $d \geq 4$.
    Then, we can explicitly set
    \begin{equation*}
        a_n = (\log n)^3 - \sqrt{2} (\log n)^2 + \frac{\log \log n}{2\sqrt{2}} \log n + \bigg(\frac{\log (2\sqrt{\pi})}{\sqrt{2}} + \frac{2}{3}\bigg) \log n,
    \end{equation*}
    which follows from the Taylor expansion (\ref{equation.07}).
    If we choose $k_n = \lfloor (\log n)^2 \rfloor$ for $d \geq 3$, the determination of $a_n$ becomes more complicated, requiring the Taylor expansion up to the third order.
\end{remark}

For a pair of real sequences $(a_n)_{n \in \N}$ and $(b_n)_{n \in \N}$, we write $\Phi_n \in \BN(\Rd \times \R)$ for the degree-marked point configuration of vertices in $G(\CX_n; r_n)$, spatially scaled according to the chosen density function $f$, with the degree marks centered at $a_n$ and scaled by $b_n$.
The precise definition will be given within the statement of the following theorem:

\begin{theorem}\label{Theorem.06}
    Let $d \geq 2$.
    Let $(r_n)_{n \in \N}$ be a sequence of radii satisfies $\log n \ll n\theta r_n^d \ll (\log n)^d$.
    Suppose that $f$ is the uniform density function $\mathbf{1}_{[-1/2,1/2]^d}$.
    Then, there exist two sequences of real numbers $(a_n)_{n \in \N}$ and $(b_n)_{n \in \N}$ such that $\Phi_n := \sum_{X \in \CX_n} \delta_{(X,~ (\deg_n(X) - a_n)/b_n)}$ converges in distribution, with respect to the vague topology, to a Poisson point process $\CQ$ on $\Rd \times \R$ with intensity measure $\Lambda$ uniquely determined by
    \begin{equation*}
        \Lambda(A \times B) := \mathrm{Leb}(A \cap [-1/2,1/2]^d) \cdot E(B) \quad (A \in \CB(\Rd), B \in \CB(\R)).
    \end{equation*}
    Here, $E$ denotes the measure on $\R$ having the density function $e^{-x}$ with respect to the Lebesgue measure on the real line.
    Furthermore, for $\mu_n := n\theta r_n^d$, we can choose $b_n := \sqrt{\mu_n/(2\log n)}$, and the sequence $(a_n)_{n \in \N}$ exhibits the following asymptotic behavior as $n \to \infty$:
    \begin{equation*}
        a_n = \mu_n + \sqrt{2\mu_n \log n} + \bigg(\frac{1}{3}\log n - \sqrt{\frac{\mu_n}{8\log n}} \log \log n\bigg) (1+o(1)).
    \end{equation*}
\end{theorem}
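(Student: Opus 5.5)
The plan is to follow the Poisson-approximation route that underlies Theorem~\ref{Theorem.03}, now with real-valued marks. First I will Poissonize: work with $\CP_n$, identify $\CX_n$ with the first $n$ points of $\CP_n$, and note that $|N_n - n| = O_P(\sqrt n)$. Such a perturbation changes each degree by at most $O_P(\sqrt n\, r_n^d)$ vertices. Since $\sqrt n\, r_n^d \ll b_n$, this shift is negligible on the scale of the marks.

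Next I will choose the normalization. For a point $x$ in the interior of the cube, write $\mu_n = n\theta r_n^d$; then $\deg(x)\sim\mathrm{Poisson}(\mu_n)$. Define $a_n$ implicitly by
\begin{equation*}
n\,\Prob[\mathrm{Poisson}(\mu_n)\geq a_n+b_n t]\to e^{-t}\quad\text{locally uniformly in } t.
\end{equation*}
A moderate/large-deviation (Cramér) expansion of the Poisson tail, with $a_n-\mu_n\asymp\sqrt{\mu_n\log n}$, gives the form
\begin{equation*}
\exp\Bigl(-\tfrac{(a-\mu)^2}{2\mu}+\tfrac{(a-\mu)^3}{6\mu^2}-\cdots\Bigr)\big/\bigl(\sqrt{2\pi}\,(a-\mu)/\sqrt\mu\bigr).
\end{equation*}
Solving this yields $b_n=\sqrt{\mu_n/(2\log n)}$ and the stated three-term expansion of $a_n$. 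The cubic correction produces the $\tfrac13\log n$ term, and the prefactor produces the $\log\log n$ term. The hypothesis $\mu_n\ll(\log n)^d$ is used only through $\mu_n\ll(\log n)^{d}$ in the boundary step below, and through $\mu_n\ll n$. Boundary points have a smaller mean ball volume and so a much lighter tail. Points within distance $r_n$ of the boundary occupy volume $O(r_n)$, and their contribution is at most $n r_n\,\Prob[\mathrm{Poi}(c\mu_n)\geq a_n]$ with $c<1$, and this tends to $0$. Under the Mecke formula this shows the intensity of $\Phi_n$ restricted to $\{t\ge t_0\}$ converges to $\Lambda$.

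For the Poisson limit I will use the criterion that it suffices to show, for finite unions of rectangles $A\times[t,\infty)$, that the void probabilities and the means converge (Kallenberg). For this I will apply the Arratia--Goldstein--Gordon / Penrose dependency-graph bound, or equivalently a Stein--Chen estimate for the Poisson process $\CP_n$ with exceedance indicators that are dependent only within distance $2r_n$. The error has two terms. The first, $b_1$, is of order $n^2 r_n^d p_n^2 \to 0$ and is trivial. The second, $b_2$, is the main obstacle. It is
\begin{equation*}
n^2\int_{|x-y|\le 2r_n}\Prob[\deg x\ge a_n,\ \deg y\ge a_n]\,dy,
\end{equation*}
and it must vanish. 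I will split by $s=|x-y|/r_n$ and condition on the overlap count $U\sim\mathrm{Poi}(\mu_n\,\nu(s))$, where $\nu(s)$ is the normalized overlap volume. Given $U$, the two degrees are independent Poissons of mean $\mu_n(1-\nu(s))$, plus $U$. A joint large-deviation computation bounds the probability by
\begin{equation*}
\exp\bigl(-\tfrac{2}{1+\nu(s)}\cdot\tfrac{(a_n-\mu_n)^2}{2\mu_n}(1+o(1))\bigr)=n^{-2/(1+\nu(s))+o(1)}.
\end{equation*}
For $s$ bounded away from $0$ this beats the factor $n^2 r_n^d$ since $\nu<1$. For small $s$, use $1-\nu(s)\asymp s$ so the exponent is $1+cs$, and the integral is at most
\begin{equation*}
n\,p_n\int_0^2 n^{-cs}\,d(s^d)=O\bigl((\log n)^{-d}\,\mu_n\bigr)\cdot O(1).
\end{equation*}
Here the factor $n^2 r_n^d$ combines with $p_n\approx 1/n$. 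This is exactly where $\mu_n\ll(\log n)^d$ is needed, and it is the delicate step: the $o(1)$ terms in the exponent must be controlled uniformly in $s$, which requires keeping the subexponential prefactors.

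Finally, the marks are integer-valued but scaled by $b_n\to\infty$. Because the tail estimate is uniform in $t$, the convergence $\E\Phi_n(A\times[t,\infty))\to\mathrm{Leb}(A\cap[-1/2,1/2]^d)e^{-t}$ holds for every $t$. Together with the Stein--Chen bound and de-Poissonization, this gives convergence to $\CQ$.
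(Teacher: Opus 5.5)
Your plan has the same architecture as the paper's proof:
\begin{itemize}
\item a normalization read off from the Poisson tail (your Cram\'er expansion reproduces the paper's expansion of $a_n$, which the paper obtains from $\widetilde{g}^{-1}(x)=\sqrt2x+\frac13x^2+O(x^3)$);
\item Kallenberg's criterion;
\item a dependency-graph Poisson approximation whose product term is trivially $O(\mu_n/n)$;
\item de-Poissonization.
\end{itemize}
The gap is the step you flag yourself, and it is the heart of the proof: the near-diagonal part of $b_2$. A bound of the form $n^{-2/(1+\nu(s))+\eta_n}$ with $\eta_n\to0$ only yields $b_2\lesssim n^{\eta_n}\mu_n(\log n)^{-d}$, and $n^{\eta_n}$ may be unbounded. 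Even a sharp Chernoff bound for the joint event misses the polynomial prefactor of the Poisson tail, $\Prob[\mathrm{Po}(\mu_n)\ge a_n]\approx e^{-\mu_nH(a_n/\mu_n)}/(2\sqrt{\pi\log n})$. Because $\mu_n$ may be as large as $o((\log n)^d)$, no polylogarithmic loss is affordable. You need $\Prob[\deg x,\deg y\text{ both large}]\le c\,n^{-1}e^{-c_1s\log n}$, with the exact order $1/n$ and constants uniform in $s$. ``Keeping the subexponential prefactors'' in a two-dimensional large-deviation estimate is not yet an argument for that.

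The paper gets this bound by conditioning rather than by a joint large-deviation estimate. Put $\varepsilon_n:=(a_n-\mu_n)/\mu_n\sim\sqrt{2\log n/\mu_n}$, so that $b_n\sim\varepsilon_n^{-1}$ and $\varepsilon_n^2\mu_n\sim2\log n$. Let $A_n$ be a \emph{bounded} window of degrees of width $O(\varepsilon_n^{-1})$ near $a_n$.
\begin{itemize}
\item The joint probability is at most $\Prob[\deg y\in A_n]\cdot\sup_{j\in A_n}\Prob[\deg x\in A_n\mid\CP(B_y)=j]$, and the first factor is at most $c/n$.
\item Conditionally, in scaled coordinates, the count in $B_x$ is $j-\mathrm{Bi}(j,F_{y-x})+\mathrm{Po}(\mu_nF_{y-x})$, where $F_{y-x}=\mathrm{Leb}(B_{y-x}\setminus B_{\bz})/\theta$. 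The increment has mean $-(1+o(1))\varepsilon_n\mu_nF_{y-x}$.
\item Chernoff bounds on the two pieces then give the pure exponential $\exp(-c_1\varepsilon_n^2\mu_nF_{y-x})\le\exp(-c_1'F_{y-x}\log n)$, with no prefactor needed. This holds whenever the window width is at most $\alpha\varepsilon_n\mu_nF_{y-x}$.
\item On the complementary set $F_{y-x}\lesssim1/\log n$, use the trivial bound $c/n$. Since $F_z\ge c\|z\|_2$, this set has scaled volume $O((\log n)^{-d})$.
\end{itemize}
Together with the boundary layer this gives $I_2=O(\mu_n(\log n)^{-d}+\mu_nr_n)\to0$.

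This conditioning trick needs bounded windows. With your upper sets $\{\deg\ge a_n+b_nt\}$ the supremum over $j$ equals $1$. You would then have to sum over $j$ against the geometric decay $\Prob[\deg y=a+i]\approx\Prob[\deg y=a](1-\varepsilon_n)^i$. Alternatively, as the paper does, work with bounded windows and reach $[t,\infty)$ by truncation (Lemma~\ref{Lemma.07}). Bounded windows are what the convergence criterion uses anyway, since $A\times[t,\infty)$ is not relatively compact in $\Rd\times\R$; the paper takes bounded boxes $(x,y]_{\R^{d+1}}$.

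A minor point: in your boundary step there is no uniform $c<1$, because points at distance nearly $r_n$ from a face have almost full balls. The trivial bound $O(r_n)\cdot n\Prob[\mathrm{Po}(\mu_n)\ge a_n]=O(r_n)$ suffices instead.
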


\begin{theorem}\label{Theorem.07}
    Let $d \geq 2$.
    Let $(r_n)_{n \in \N}$ be a sequence of radii satisfies $\log n \ll n\theta r_n^d \ll (\log n)^d$.
    Suppose that $f$ is a non-uniform density function satisfying Condition~\ref{Condition.01}.
    Then, there exist $(a_n)_{n \in \N}$ and $(b_n)_{n \in \N}$ such that $\Phi_n := \sum_{X \in \CX_n} \delta_{(k_n^{1/(2m)} X,~ (\deg_n(X) - a_n)/b_n)}$ converges in distribution, with respect to the vague topology, to a Poisson point process $\CQ$ on $\Rd \times \R$ with intensity measure $\Lambda$ uniquely determined by
    \begin{equation*}
        \Lambda(A \times B) := \gamma^{-1} \int_A \exp\bigg(\frac{\partial^{2m}f(\bz)[z^{\otimes 2m}]}{(2m)!}\bigg) \,dz \cdot E(B) \quad (A \in \CB(\Rd), B \in \CB(\R)).
    \end{equation*}
    Furthermore, for $\mu_n := n\theta r_n^d$, we can choose $b_n := \sqrt{f_{\max} \mu_n/(2\log n)}$, and the sequence $(a_n)_{n \in \N}$ exhibits the following asymptotic behavior up to the third order as $n \to \infty$:
    \begin{equation*}
        \begin{aligned}
            a_n &= f_{\max} \mu_n + \sqrt{2f_{\max} \mu_n \log n}\\
            &\quad + \bigg(\frac{1}{3}\log n - \sqrt{\frac{f_{\max} \mu_n}{8\log n}} \bigg\{\bigg(1+\frac{d}{2m}\bigg)\log \log n + \frac{d}{4m} \log \mu_n\bigg\}\bigg) (1+o(1)).
        \end{aligned}
    \end{equation*}
\end{theorem}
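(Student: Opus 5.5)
We prove Theorem~\ref{Theorem.07} through a Poisson approximation for marked point processes, treating the spatial scaling and the degree scaling separately. \textbf{First, Poissonize.} We work with $\CP_n$ in place of $\CX_n$. By Kallenberg's theorem it suffices to prove, for every finite union $R$ of bounded rectangles $A \times (x,\infty)$ (with $A \subset \Rd$), that $\E[\Phi_n(R)] \to \Lambda(R)$ and $\Prob[\Phi_n(R)=0] \to e^{-\Lambda(R)}$. The binomial and Poisson versions differ only when $O(\sqrt n)$ points are added or removed. Such a change alters the degree of a fixed vertex by $O(\sqrt{n}\, r_n^d)=o(1)$ in expectation. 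Since $b_n\to\infty$, a coupling plus a union bound over the rare high-degree vertices shows that de-Poissonization costs only $o(1)$.

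\textbf{Second, the first-moment (intensity) computation.} By the Mecke formula,
\begin{equation*}
\E\big[\Phi_n^{\CP}(A\times(x,\infty))\big] = n\int_{k_n^{-1/(2m)}A} f(y)\,\Prob\big[\mathrm{Po}(n\textstyle\int_{B(y;r_n)}f) \ge a_n+b_nx\big]\,dy ,
\end{equation*}
where $k_n$ is read as $f_{\max}\mu_n$. We expand the local mean $n\int_{B(y;r_n)} f = f_{\max}\mu_n(1+\tfrac{1}{(2m)!}f_{\max}^{-1}\partial^{2m}f(\bz)[y^{\otimes 2m}]+o(|y|^{2m}))$. The correction term is negligible because $r_n\ll |y|\sim \mu_n^{-1/(2m)}$ requires $r_n^{2m}\mu_n \to 0$, which holds since $\mu_n \ll(\log n)^d$ forces $r_n$ to be polynomially small.

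Next we apply a sharp Poisson tail estimate (Cramér/Bahadur–Rao type): for $j=\lambda+t$ with $t\sim\sqrt{2\lambda\log n}=o(\lambda)$,
\begin{equation*}
\Prob[\mathrm{Po}(\lambda)\ge j]\sim \frac{\sqrt{\lambda}}{t\sqrt{2\pi}}\exp\Big(-\frac{t^2}{2\lambda}+\frac{t^3}{6\lambda^2}-\cdots\Big).
\end{equation*}
The hypothesis $\mu_n\ll(\log n)^d$ (indeed $\mu_n \gg \log n$) means that only finitely many cumulant corrections matter, up to the third order stated. Substituting $y=k_n^{-1/(2m)}z$, the quadratic term gives the factor $\exp(\partial^{2m}f(\bz)[z^{\otimes 2m}]/(2m)!)$. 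This holds because perturbing $\lambda$ by $\lambda\,\delta$ shifts the exponent by $t\delta\approx\sqrt{2\log n/\lambda}\cdot\lambda\delta$. So the correct spatial scale is the one where $t\delta\asymp1$, i.e.\ $|y|^{2m}\asymp 1/(\sqrt{\mu_n\log n})$. This scale differs from $k_n^{-1/(2m)}$, and I expect the precise scale must be matched to the paper's convention. The factor $\gamma$ together with the Jacobian $k_n^{-d/(2m)}$ produces the $\frac{d}{4m}\log\mu_n$ and $(1+\frac d{2m})\log\log n$ terms in $a_n$.

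We then define $a_n$ implicitly by requiring $n\gamma k_n^{-d/(2m)} f_{\max}\Prob[\mathrm{Po}(f_{\max}\mu_n)\ge a_n]\to1$. The choice $b_n=\sqrt{f_{\max}\mu_n/(2\log n)}$ makes shifting the threshold by $b_nx$ multiply the tail by $e^{-x}$, because $t b_n x/\lambda \to x$. Contributions from outside a shrinking neighborhood of $\bz$ are killed by the second part of Condition~\ref{Condition.01}: there the mean is at most $(1-\varepsilon_0)f_{\max}\mu_n$, whose tail is $n^{-c\mu_n/\log n}$-small. Solving for $a_n$ by iteration yields the stated three-term expansion.

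\textbf{Third, the second-moment factorial bound and the main obstacle.} We use a dependency-graph or Stein-type Poisson approximation, e.g.\ the Penrose bound for $\Prob[\Phi_n(R)=0]$. This requires
\begin{equation*}
n^2\iint_{|y-y'|\le 2r_n}\Prob[\deg(y),\deg(y')\text{ both large}]\,dy\,dy'\to0 .
\end{equation*}
This is the hard step. Nearby high-degree points are strongly correlated, since their balls overlap. For $|y-y'|=s r_n$, we write the joint event via independent Poissons on $B(y)\cap B(y')$ and the two differences. The joint tail is then roughly $\exp(-\frac{t^2}{\lambda(1+\kappa(s))})$, where $\kappa(s)$ is the normalized overlap volume. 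A gain appears as soon as $s$ is not tiny. Points with $s\lesssim t^{-1}$ (very close) must be handled separately. Since $r_n\to 0$ there are only $O(1)$ expected neighbors in that range per high vertex, but a clustering issue remains. I would show that the expected number of pairs with $s\le \mu_n^{-1/2}$ is $O(n\, r_n^d\mu_n^{-d/2})\cdot n^{-1}\to 0$, using an anisotropic volume estimate ($1-\kappa(s)\asymp s$ for a general norm). Near-but-distinct pairs are controlled by the exponent gain $\asymp s\log n$ integrated against $s^{d-1}$. This is the step I expect to be the most delicate, particularly keeping the error uniform in the third-order corrections.
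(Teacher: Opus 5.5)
Your architecture matches the paper's: Mecke formula for the first moment with $a_n$ fixed implicitly, an $I_1,I_2$ dependency-graph bound, de-Poissonization, and Kallenberg's criterion. But the first-moment step has a genuine gap exactly where you hedged, namely the spatial scale.

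\textbf{The spatial scale.} Your heuristic is right, and you should commit to it rather than to $k_n^{1/(2m)}$.
\begin{itemize}
\item With $a_n=(1+\varepsilon_n)f_{\max}\mu_n$ one has $\partial_\lambda\{-\lambda H(a_n/\lambda)\}=a_n/\lambda-1\approx\varepsilon_n$. Let $\delta(y)\approx-\partial^{2m}f(\bz)[y^{\otimes 2m}]/((2m)!f_{\max})$. Lowering the local mean by $f_{\max}\mu_n\delta(y)$ multiplies the tail by $e^{-\varepsilon_n f_{\max}\mu_n\delta(y)}$.
\item So the profile lives on $|y|^{2m}\asymp(\varepsilon_n\mu_n)^{-1}\asymp(\mu_n\log n)^{-1/2}$. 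This is exactly the substitution $x=(\varepsilon_n/\log n)^{1/(2m)}z$ the paper makes when computing the expectation in the super-connectivity regime (Section~4.1), and that is the computation it reuses here.
\item The normalization $k_n^{1/(2m)}$ comes from the connectivity regime, where the threshold-to-mean ratio stays bounded away from $1$. Here, with $k_n\asymp\mu_n$, the exponent on that scale is $O(\varepsilon_n|z|^{2m})\to0$, so the factor $\exp(\partial^{2m}f(\bz)[z^{\otimes 2m}]/(2m)!)$ cannot appear.
\item Your normalization $n\gamma k_n^{-d/(2m)}f_{\max}\Prob[\mathrm{Po}(f_{\max}\mu_n)\ge a_n]\to1$ makes the true expected number of vertices of degree at least $a_n$ of order $\varepsilon_n^{-d/(2m)}\asymp(\mu_n/\log n)^{d/(4m)}\to\infty$.
\item The Jacobian $k_n^{-d/(2m)}$ also does not produce the corrections you attribute to it. It contributes $-\frac{d}{2m}\log\mu_n$ and nothing to the $\log\log n$ coefficient.
\item With the correct Jacobian $(\varepsilon_n\mu_n)^{-d/(2m)}$ and the sharp-tail prefactor $(\varepsilon_n\sqrt{2\pi f_{\max}\mu_n})^{-1}$, the condition becomes $f_{\max}\mu_nH(1+\varepsilon_n)=\log n-\frac12(1+\frac{d}{2m})\log\log n-\frac{d}{4m}\log\mu_n+c_*+o(1)$ for an explicit constant $c_*$. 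This is the analogue of the paper's implicit equation for $\varepsilon_n$.
\item The expansion of $a_n$ then follows from $\widetilde g^{-1}(x)=\sqrt{2}\,x+\frac13x^2+O(x^3)$. Expanding $\sqrt{2f_{\max}\mu_nL}$, with $L$ that right-hand side, doubles both coefficients, so the $\log\mu_n$ coefficient inside the braces comes out as $\frac{d}{2m}$; recheck the display.
\end{itemize}
So the statement you can actually prove uses $(\varepsilon_n\mu_n)^{1/(2m)}X$, which gives exactly the intensity $\gamma^{-1}\exp(\partial^{2m}f(\bz)[z^{\otimes 2m}]/(2m)!)$. With $k_n^{1/(2m)}X$ taken literally, the limit is $0$ if $a_n$ normalizes the total mass, and has a flat spatial intensity with your $a_n$.

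\textbf{Smaller points.}
\begin{itemize}
\item Close pairs in the second-moment step: your bound for pairs at distance $sr_n$ with $s\le\mu_n^{-1/2}$ carries a spurious factor $n^{-1}$. Bounding the joint probability by the single-vertex one gives a contribution of order $\mu_n s^d=\mu_n^{1-d/2}$, which does not vanish for $d=2$.
\item No such cutoff is needed. As in the paper, condition on the count in $B_y$ sitting at the top of the degree window. The binomial and Poisson tail bounds give the gain $\exp(-c\varepsilon_n^2\mu_nF_{y-x})\le\exp(-cF_{y-x}\log n)$ once the window width $\asymp\varepsilon_n^{-1}$ is below a small multiple of $\varepsilon_n\mu_nF_{y-x}$, i.e.\ for $F_{y-x}\gtrsim1/\log n$. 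The remaining pairs are controlled by volume alone.
\item Both parts are $O(\mu_n(\log n)^{-d})$, which is exactly where $\mu_n\ll(\log n)^d$ enters. Your ``gain $s\log n$ against $s^{d-1}$'' is this computation.
\item Your coupling-based de-Poissonization (degree changes that are $o(b_n)$, plus a continuous limiting mark law) is a legitimate alternative to the paper's sandwich $\CP_n^-\subset\CX_n\subset\CP_n^+$.
\item The sets $A\times(x,\infty)$ are not relatively compact in $\Rd\times\R$, and finite unions of them do not form a ring. Apply Kallenberg's criterion to bounded boxes as the paper does, or work on $\Rd\times(-\infty,\infty]$.
\end{itemize}
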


\subsection{Corollaries of main theorems}
This section is devoted to stating the corollaries that follow from the main results.
The first two concern the limits in probability for both the threshold radius of the maximum degree and the maximum degree itself.

\begin{corollary}\label{Corollary.01}
    Let $d \geq 2$.
    Let $(k_n)_{n \in \N}$ be a sequence of integers satisfies $k_n/\log n \to b$ for some $b \in (0,\infty]$ and $k_n = o((\log n)^d)$.
    Suppose that $f$ is a density function satisfying Condition~\ref{Condition.01}.
    Then, as $n \to \infty$,
    \begin{equation*}
        \frac{n\theta S_{k_n}(\CX_n)^d}{k_n} \to \frac{-W_0(-e^{-1-1/b})}{f_{\max}}, \quad \text{in probability}.
    \end{equation*}
\end{corollary}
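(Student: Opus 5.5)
Corollary~\ref{Corollary.01} follows from the distributional limits in Theorems~\ref{Theorem.01}, \ref{Theorem.02} and \ref{Theorem.04}: divide the centred and scaled quantity by $k_n$ and show that everything except the leading centring term vanishes. We treat $b \in (0,\infty)$ and $b = \infty$ separately.

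\emph{Case $b \in (0,\infty)$.} Theorem~\ref{Theorem.01} (uniform $f$, $f_{\max}=1$) and Theorem~\ref{Theorem.02} (non-uniform $f$) give
\begin{equation*}
    -n\theta f_{\max} S_{k_n}(\CX_n)^d + a_n k_n + c\, b_n \log k_n + C_{\infty} = O_P(1),
\end{equation*}
where $c$ and $C_\infty$ are finite constants. We proceed in three steps.
\begin{enumerate}[label=(\roman*)]
    \item Since $n^{1/k_n} = \exp(\log n/k_n) \to e^{1/b}$, the argument $-1/(en^{1/k_n})$ tends to $-e^{-1-1/b} \in (-1/e,0)$. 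By continuity of $W_0$ and $W_0'$ on $(-1/e,0]$, $a_n \to a_\infty = -W_0(-e^{-1-1/b})$ and $b_n \to b_\infty$.
    \item Because $b_n$ is bounded and $\log k_n = o(k_n)$, dividing the display by $k_n$ gives $n\theta f_{\max} S_{k_n}(\CX_n)^d/k_n - a_n \to 0$ in probability.
    \item Combining with $a_n \to a_\infty$ yields the claim.
\end{enumerate}

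\emph{Case $b=\infty$,} i.e.\ $\log n \ll k_n \ll (\log n)^d$. Theorem~\ref{Theorem.04} gives
\begin{equation*}
    \frac{-n\theta f_{\max} S_{k_n}(\CX_n)^d + a_n}{b_n} = O_P(1), \qquad b_n = \sqrt{k_n/(2\log n)}.
\end{equation*}
Hence
\begin{equation*}
    \frac{n\theta f_{\max}S_{k_n}(\CX_n)^d}{k_n} = \frac{a_n}{k_n} + O_P\Big(\frac{b_n}{k_n}\Big), \qquad \frac{b_n}{k_n} = (2k_n\log n)^{-1/2} \to 0.
\end{equation*}
From the expansion of $a_n$ in Theorem~\ref{Theorem.04},
\begin{equation*}
    \frac{a_n}{k_n} = 1 - \sqrt{\frac{2\log n}{k_n}} + O\Big(\frac{\log n}{k_n}\Big) + O\Big(\frac{\log\log n + \log k_n}{\sqrt{k_n\log n}}\Big) \to 1.
\end{equation*}
The last term vanishes because $\log k_n = O(\log\log n)$, so it is $O(\log\log n/\sqrt{k_n \log n}) \to 0$. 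Since $-W_0(-e^{-1}) = 1$ (as $W_0(-1/e) = -1$), this limit agrees with the formula with $1/b = 0$.

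The only step needing care is reading the $(1+o(1))$ in the expansion of $a_n$ in Theorem~\ref{Theorem.04}. It must be taken to multiply only the third-order bracket, so that the first two terms are exact. Even if it were read as multiplying the whole expression, $a_n/k_n \to 1$ would still follow. The remaining steps are routine.
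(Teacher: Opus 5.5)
Your proposal is correct and follows the paper's own argument: the paper also divides the centred quantity from Theorems~\ref{Theorem.01}, \ref{Theorem.02} and \ref{Theorem.04} by $k_n$, expressing tightness through the Portmanteau bound $\limsup_n \Prob[|\cdot|\geq M] \leq \Prob[|Z(b_\infty)|\geq M]$ and then letting $M\to\infty$. The paper writes out only the uniform connectivity case and covers the rest as following ``almost identical lines''; your explicit $b=\infty$ case, where $b_n=\sqrt{k_n/(2\log n)}$ diverges but $b_n/k_n\to 0$, supplies those omitted details.
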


\begin{corollary}\label{Corollary.02}
    Let $d \geq 2$.
    Let $(r_n)_{n \in \N}$ be a sequence of radii satisfies $n\theta r_n^d/\log n \to b$ for some $b \in (0,\infty]$ and $n\theta r_n^d = o((\log n)^d)$.
    Suppose that $f$ is a density function satisfying Condition~\ref{Condition.01}.
    Then, as $n \to \infty$,
    \begin{equation*}
        \frac{\Delta_n}{n\theta r_n^d} \to f_{\max} H_+^{-1}\bigg(\frac{1}{f_{\max} b}\bigg), \quad \text{in probability},
    \end{equation*}
    where $H_+^{-1}$ denotes the inverse function of $H(t) := 1 - t + t \log t ~ (t \geq 1)$.
\end{corollary}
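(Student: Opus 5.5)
The plan is to deduce Corollary~\ref{Corollary.02} from Corollary~\ref{Corollary.01} using the duality between the maximum degree and the threshold radius. Since the maximum degree of $G(\CX_n;r)$ is nondecreasing in $r$, for any integer $k$ we have
\begin{equation*}
    \{S_k(\CX_n) < r_n\} \subset \{\Delta_n \geq k\} \subset \{S_k(\CX_n) \leq r_n\}.
\end{equation*}
Whether the infimum defining $S_k$ is attained does not matter, because we will only use strict inequalities in the limit.

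Write $\mu_n := n\theta r_n^d$, so that $\mu_n/\log n \to b$. For $\beta \in (0,\infty]$, put $s(\beta) := -W_0(-e^{-1-1/\beta})$, with $s(\infty)=1$. Then $s(\beta)\in(0,1]$, and the defining relation $s e^{-s} = e^{-1-1/\beta}$ is equivalent to
\begin{equation*}
    s - 1 - \log s = 1/\beta .
\end{equation*}
Setting $t := 1/s \geq 1$, this becomes $H(t)/t = 1/\beta$. Since $s-1-\log s$ is decreasing on $(0,1]$, the map $\beta \mapsto s(\beta)$ is strictly increasing.

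Now take $k_n := \lfloor \lambda \mu_n \rfloor$ with a fixed $\lambda>0$. Then $k_n/\log n \to \lambda b$ and $k_n = o((\log n)^d)$, so Corollary~\ref{Corollary.01} applies and gives
\begin{equation*}
    \frac{n\theta S_{k_n}(\CX_n)^d}{\mu_n} \to \frac{\lambda\, s(\lambda b)}{f_{\max}} \quad \text{in probability}.
\end{equation*}
The function $g(\lambda) := \lambda s(\lambda b)$ is continuous and strictly increasing, with $g(0+)=0$ and $g(\infty)=\infty$. Hence there is a unique $\lambda^*$ with $g(\lambda^*) = f_{\max}$.

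Next we identify $\lambda^*$. Write $t = 1/s(\lambda^* b)$, so $\lambda^* = f_{\max} t$. The relation $H(t)/t = 1/(\lambda^* b)$ then becomes $H(t) = 1/(f_{\max} b)$, so $t = H_+^{-1}(1/(f_{\max}b))$ and $\lambda^* = f_{\max} H_+^{-1}(1/(f_{\max}b))$. For $b=\infty$ we get $s\equiv 1$, $\lambda^* = f_{\max} = f_{\max}H_+^{-1}(0)$, which is consistent.

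For the conclusion, fix $\varepsilon>0$ and set $k_n^\pm := \lfloor (1\pm\varepsilon)\lambda^*\mu_n\rfloor$.
\begin{itemize}
    \item Since $g((1-\varepsilon)\lambda^*) < f_{\max}$, the display above yields $n\theta S_{k_n^-}(\CX_n)^d < \mu_n$, that is $S_{k_n^-} < r_n$, with probability tending to one. Hence $\Delta_n \geq k_n^-$ with probability tending to one.
    \item Since $g((1+\varepsilon)\lambda^*) > f_{\max}$, we get $S_{k_n^+} > r_n$ with high probability. Hence $\Delta_n < k_n^+$ with probability tending to one.
\end{itemize}
Dividing by $\mu_n$ and letting $\varepsilon\downarrow 0$ gives $\Delta_n/\mu_n \to \lambda^*$ in probability, which is the claim.

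The step requiring the most care is the identification of $\lambda^*$ through the Lambert-$W$ relation $H(t)=t/\beta$, together with the strict monotonicity of $g$; strict monotonicity is what makes the sandwich argument work. The case $b=\infty$ needs a separate check that Corollary~\ref{Corollary.01} is used with $\lambda b=\infty$ and that $s\equiv1$ there. Everything else is a direct application of Corollary~\ref{Corollary.01}.
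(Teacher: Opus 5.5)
Your proposal is correct and is essentially the paper's argument. The paper proves Corollary~\ref{Corollary.02} by rephrasing, for convergence in probability, Penrose's deduction of the maximum-degree limit from the threshold-radius limit (proof of Theorem~6.14 in \cite{MR1986198}); that is exactly your sandwich $\{S_k(\CX_n)<r_n\}\subset\{\Delta_n\ge k\}\subset\{S_k(\CX_n)\le r_n\}$, applied via Corollary~\ref{Corollary.01} to $k_n^\pm=\lfloor(1\pm\varepsilon)\lambda^*\mu_n\rfloor$. Your identification of $\lambda^*=f_{\max}H_+^{-1}(1/(f_{\max}b))$ through $H(t)/t=1/\beta$, together with the strict monotonicity of $\lambda\mapsto\lambda s(\lambda b)$, supplies the details the paper leaves implicit.
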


\begin{remark}
    Previously, the almost sure limits of $\Delta_n$ ($S_{k_n}(\CX_n)$) in the regime where $nr_n^d$ (resp., $k_n$) is of order at least $\log n$ were established only for compactly supported bounded densities.
    In contrast, restricted to the regime where $nr_n^d$ (resp., $k_n$) is of order at least $\log n$ but less than $(\log n)^d$, these corollaries establish the limits in probability for general bounded densities without requiring compact support, assuming only regularity and non-degeneracy near the maximum.
\end{remark}

Next, we state the results on the fluctuations of the maximum degree.

\begin{corollary}\label{Corollary.03}
    Let $d \geq 2$.
    Let $(k_n)_{n \in \N}$ be a sequence of integers satisfies $k_n/\log n \to b$ for some $b \in (0,\infty)$.
    Suppose that $f$ is a density function satisfying Condition~\ref{Condition.01}.
    Let $(r_n)_{n \in \N}$ be a sequence of radii satisfies $\E[\sum_{j=k_n}^\infty W_{j,n}'] \to \beta$ for some $\beta \in (0,\infty)$.
    Then, for a integer-valued random variable $Z_b$ satisfying $\Prob[Z_b \leq K] = \exp(-(-W_0(-e^{-1-1/b}))^{K+1}) ~ (K \in \Z)$, it holds that
    \begin{equation*}
        \Delta_n - k_n \xrightarrow{d} Z_b.
    \end{equation*}
\end{corollary}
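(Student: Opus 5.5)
Corollary~\ref{Corollary.03} should follow directly from Theorem~\ref{Theorem.03}. The idea is to evaluate the limiting Poisson point process on a set of the form $\Rd \times \{K+1, K+2, \dots\}$.

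Fix $K \in \Z$. Since $\Delta_n$ is the largest degree, $\Delta_n - k_n \leq K$ holds exactly when $\Phi_n(\Rd \times \{K+1,K+2,\dots\}) = 0$. In the non-uniform case the spatial coordinate is rescaled by $k_n^{1/(2m)}$, but this rescaling is a bijection of $\Rd$ and does not affect the event.

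The set $\Rd \times \{K+1,\dots\}$ is closed and open in $\Rd\times\Z$, but it is not relatively compact. So we cannot directly apply the vague convergence of Theorem~\ref{Theorem.03} to conclude
\[
\Prob[\Phi_n(\Rd \times \{K+1,\dots\})=0] \to \Prob[\CQ(\Rd \times \{K+1,\dots\})=0].
\]
Two truncations are needed.

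\emph{Spatial truncation.} Take $A_R = [-R,R]^d$. The event $\{\Phi_n(A_R \times \{K+1,\dots,L\}) = 0\}$ is determined by the count on a relatively compact set. Its boundary has $\Lambda$-measure zero, because $\Lambda$ is absolutely continuous in space and the mark space is discrete. Hence, by Theorem~\ref{Theorem.03}, its probability converges to $\exp(-\Lambda(A_R\times\{K+1,\dots,L\}))$.

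\emph{Truncation of the marks.} We must show that
\[
\limsup_n \Prob[\Phi_n((\Rd\setminus A_R)\times\{K+1,\dots\})>0] + \limsup_n \Prob[\Phi_n(\Rd\times\{L+1,\dots\})>0]
\]
tends to $0$ as $R,L\to\infty$. I would control this by a first-moment (Markov) bound. The expected number of vertices of $\CX_n$ with degree at least $k_n+K+1$ lying outside the scaled box should be comparable to the Poissonized quantity $\E[\sum_{j\ge k_n+K+1} W_{j,n}']$, restricted to that spatial region. The same Mecke-formula and binomial-to-Poisson computations used in the proof of Theorem~\ref{Theorem.03} give this. Those computations show that the first-moment measure converges to $\Lambda$ with uniform tails. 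This uniform-tail statement is presumably the tightness ingredient already established there, so I would cite it rather than redo it.

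\emph{Conclusion.} Letting $L,R\to\infty$ gives
\[
\Prob[\Delta_n-k_n\le K]\to\exp\bigl(-\Lambda(\Rd\times\{K+1,\dots\})\bigr).
\]
It remains to compute this limit. The spatial factor has total mass $\beta$ in both cases: $\mathrm{Leb}([-1/2,1/2]^d)=1$ in the uniform case, and $\gamma^{-1}\int\exp(\cdot)=1$ in the non-uniform case. For the mark factor, write $q=-W_0(-e^{-1-1/b})\in(0,1)$. Then
\[
G_b(\{K+1,\dots\})=\sum_{m\ge K+1}q^m(1-q)=q^{K+1}.
\]
So the limit is $\exp(-\beta q^{K+1})$. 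This matches the stated law of $Z_b$ when $\beta=1$. For general $\beta$, the shift $\beta=q^{-s}$ gives $\exp(-q^{K+1-s})$, so the statement as written implicitly takes $\beta=1$, or else $Z_b$ should absorb this shift.

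Since $\Delta_n-k_n$ is integer-valued, convergence at every $K\in\Z$ gives convergence in distribution. The main obstacle is the tail control in the truncation step, namely the uniform smallness of the first moment outside compact sets. This is where Condition~\ref{Condition.01} is used: it makes $f\le(1-\varepsilon_0)f_{\max}$ away from the origin, so far-away vertices are exponentially unlikely to reach degree $k_n$.
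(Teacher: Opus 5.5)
Your proof is correct and is essentially the paper's argument. The paper exhausts $\Rd \times [K+1,\infty)_{\Z}$ by the boxes $V_M = (-M,M]^d \times [K+1,K+M]_{\Z}$, applies Theorem~\ref{Theorem.03} to each $V_M$, and passes to the limit via Lemma~\ref{Lemma.07}, whose key step is exactly your Markov bound $\Prob[S_n - S_n^J > 0] \le \E[S_n - S_n^J]$, so both arguments rest on the same first-moment convergence of $\E[\Phi_n(\cdot)]$ to $\Lambda$.

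Your remark on $\beta$ is also right: since $\Lambda(\Rd \times [K+1,\infty)_{\Z}) = \beta q^{K+1}$ with $q = -W_0(-e^{-1-1/b})$, the limit is $\exp(-\beta q^{K+1})$, whereas the paper's final equality sets $\Prob[\mathrm{Po}(\Lambda(\Rd \times [K+1,\infty)_{\Z})) = 0]$ equal to $\exp(-q^{K+1})$ and silently drops $\beta$, so the corollary as stated holds only for $\beta = 1$.
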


\begin{corollary}\label{Corollary.04}
    Let $d \geq 2$.
    Let $(r_n)_{n \in \N}$ be a sequence of radii satisfies $\log n \ll n\theta r_n^d \ll (\log n)^d$.
    Suppose that $f$ is a density function satisfying Condition~\ref{Condition.01}.
    Then, there exist two sequences of real numbers $(a_n)_{n \in \N}$ and $(b_n)_{n \in \N}$ such that
    \begin{equation*}
        \frac{\Delta_n - a_n}{b_n} \xrightarrow{d} Z(1)
    \end{equation*}
    holds.
    Furthermore, for $\mu_n := n\theta r_n^d$, we can choose $b_n := \sqrt{f_{\max} \mu_n/(2\log n)}$, and the sequence $(a_n)_{n \in \N}$ exhibits the following asymptotic behavior up to the third order as $n \to \infty$:
    \begin{equation*}
        \begin{aligned}
            a_n &= f_{\max} \mu_n + \sqrt{2f_{\max} \mu_n \log n}\\
            &\quad + \bigg(\frac{1}{3}\log n - \sqrt{\frac{f_{\max} \mu_n}{8\log n}} \bigg\{\bigg(1+\frac{d}{2m}\bigg)\log \log n + \frac{d}{4m} \log \mu_n\bigg\}\bigg) (1+o(1)).
        \end{aligned}
    \end{equation*}
    Here, in the uniform case, the degree of flatness near the maximum point is treated as $m = \infty$; thus, by convention, we set $m^{-1} = 0$.
\end{corollary}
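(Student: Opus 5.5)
The argument deduces Corollary~\ref{Corollary.04} from the marked point process convergence in Theorems~\ref{Theorem.06} and~\ref{Theorem.07}. In both the uniform and the non-uniform case we take the sequences $(a_n)$ and $(b_n)$ provided by the relevant theorem, with $b_n = \sqrt{f_{\max}\mu_n/(2\log n)}$ and the three-term expansion of $a_n$ stated there. In the uniform case $f_{\max}=1$ and $m^{-1}=0$, so the expansion in Theorem~\ref{Theorem.06} agrees with the one claimed here. For fixed $x\in\R$ we have
\begin{equation*}
    \{(\Delta_n - a_n)/b_n \leq x\} = \{\Phi_n(\Rd \times (x,\infty)) = 0\}.
\end{equation*}
In the non-uniform case the spatial rescaling $X\mapsto k_n^{1/(2m)}X$ does not change the marks, so the same identity holds. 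If $\Phi_n(\Rd\times(x,\infty))$ converged in distribution to $\CQ(\Rd\times(x,\infty))$, which is Poisson with mean $\Lambda(\Rd\times(x,\infty)) = 1\cdot\int_x^\infty e^{-t}\,dt = e^{-x}$, we would get
\begin{equation*}
    \Prob[(\Delta_n-a_n)/b_n\le x]\to \exp(-e^{-x}) = \Prob[Z(1)\le x].
\end{equation*}
Here the spatial total mass is $1$: in the uniform case it is $\mathrm{Leb}([-1/2,1/2]^d)=1$, and in the non-uniform case it is $\gamma^{-1}\gamma=1$.

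The main obstacle is that vague convergence only gives convergence of counts on relatively compact sets, while $\Rd\times(x,\infty)$ is unbounded in both coordinates. I handle this with a truncation argument. Let $K$ be a large compact set and $L<\infty$ a large mark level. Vague convergence gives convergence in distribution of $\Phi_n(K\times(x,L])$, since $\Lambda(\partial(K\times(x,L]))=0$ for suitable $K$ and $\Lambda$ has no atoms in the mark direction. It then suffices to show
\begin{equation*}
    \limsup_n \Prob[\Phi_n((\Rd\setminus K)\times(x,\infty))>0] \to 0 \quad\text{as } K\uparrow\Rd,
\end{equation*}
and
\begin{equation*}
    \limsup_n \Prob[\Phi_n(\Rd\times(L,\infty))>0] \to 0 \quad\text{as } L\to\infty.
\end{equation*}
Both follow from a first-moment bound by Markov's inequality. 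The expected number of vertices with degree exceeding $a_n+b_nx$ and location outside $K$, or with degree exceeding $a_n+b_nL$, can be bounded using the Poisson or binomial tail estimates already developed for the proofs of Theorems~\ref{Theorem.06}–\ref{Theorem.07}. Those proofs presumably establish convergence of the intensity $\E\Phi_n$ to $\Lambda$ on sets of exactly this form, including sets unbounded in space. In the non-uniform case the region $|x|>\rho_1$ is controlled by Condition~\ref{Condition.01}: there $f\le(1-\varepsilon_0)f_{\max}$, so the degree is a binomial with mean at most $(1-\varepsilon_0)f_{\max}\mu_n$. Exceeding $a_n\approx f_{\max}\mu_n+\sqrt{2f_{\max}\mu_n\log n}$ then has probability $o(n^{-1})$ summed over the $n$ vertices, because $\varepsilon_0\mu_n\gg\sqrt{\mu_n\log n}$. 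For the tail in $L$, the expected number of vertices with mark above $L$ tends to $e^{-L}$, which vanishes as $L\to\infty$.

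Combining these, $|\Prob[\Phi_n(\Rd\times(x,\infty))=0]-\Prob[\Phi_n(K\times(x,L])=0]|$ is small uniformly in large $n$. Taking limits in $n$, then $K$ and $L$, gives the Gumbel limit $Z(1)$ at every $x$. Since the Gumbel distribution is continuous, this pointwise convergence yields convergence in distribution, and the stated choices of $b_n$ and the asymptotics of $a_n$ carry over directly from Theorems~\ref{Theorem.06} and~\ref{Theorem.07}.
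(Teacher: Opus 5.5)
Your proposal is correct and is essentially the paper's argument. The paper likewise reduces to $\Prob[\Phi_n(\Rd\times(x,\infty))=0]$. It handles non-compactness by exhausting $\Rd\times(x,\infty)$ with the boxes $(-m,m]^d\times(x,x+m]$ and invoking Lemma~\ref{Lemma.07}, whose proof is exactly your first-moment truncation bound. As in your sketch, the only input beyond Theorems~\ref{Theorem.06}--\ref{Theorem.07} is convergence of the first moment on the unbounded set, $\E[\Phi_n(\Rd\times(x,\infty))]\to\Lambda(\Rd\times(x,\infty))=e^{-x}$. The paper also takes this from its expectation computations rather than re-proving it in the corollary.
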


\begin{remark}\label{Remark.01}
    We now look at the fluctuation results on $\Delta_n$ obtained above in light of those in \cite{MR2501248}.
    The results in \cite[Section~5]{MR2501248} are the following: in the case where $f$ is the uniform density on a $d$-dimensional unit cube,
    \begin{enumerate}
        \item[(1)] whenever $\log n \ll nr_n^d \ll (\log n)^d$, the suitably translated and normalized $\Delta_n$ converges in distribution to a standard Gumbel variable,
        \item[(2)] whenever $nr_n^d = \Theta(\log n)$, the fluctuations of $\Delta_n$ are unbounded.
    \end{enumerate}
    Corollary~\ref{Corollary.04} above recovers (1) and extends it to general densities satisfying Condition~\ref{Condition.01}.
    Moreover, Corollary~\ref{Corollary.03} demonstrates that for $nr_n^d = \Theta(\log n)$, under a suitable choice of $r_n$, the unbounded fluctuation of $\Delta_n$ indeed converges weakly to a specific probability distribution, which is referred to as the discrete Gumbel distribution in \cite{MR4216888}.
\end{remark}

\section{Proofs for the connectivity regime}
\subsection{Threshold radius}\label{Section.01}
We begin with the uniform case.
Given a sequence of radii $(r_n)_{n \in \N}$ and an integer $j \geq 0$, let $W_{j,n}$ and $W_{j,n}'$ denote the number of vertices with degree $j$ in $G(\CX_n; r_n)$ and $G(\CP_n; r_n)$, respectively.
Then, note that the equivalence
\begin{equation}
    S_{k_n}(\CX_n) > r_n \iff \sum_{j=k_n}^{n-1} W_{j,n} = 0 \label{equation.06}
\end{equation}
holds.
Therefore, characterizing the fluctuation of $S_{k_n}(\CX_n)$ reduces to it that of $\sum_{j=k_n}^{n-1} W_{j,n}$.
To achieve this, we find an explicit form of $(r_n)_{n \in \N}$ that satisfies $\E[\sum_{j=k_n}^{n-1} W_{j,n}] \to \beta \in (0,\infty)$ and show that $\sum_{j=k_n}^{n-1} W_{j,n}$ is approximately Poisson.
For computational simplicity, we first analyze a Poissonized version $\sum_{j=k_n}^\infty W_{j,n}'$ rather than $\sum_{j=k_n}^{n-1} W_{j,n}$.
Based on the a.s. limit result in Penrose~\cite[Theorem~6.11]{MR1986198}, we search for $(r_n)_{n \in \N}$ that satisfies the following conditions:
\begin{equation}\label{equation.01}
    \frac{n\theta r_n^d}{k_n} \to C_0 \in (0,1) \quad \text{and} \quad E\bigg[\sum_{j=k_n}^\infty W_{j,n}'\bigg] \sim n\sum_{j \geq k_n} \frac{(n\theta r_n^d)^j}{j!} e^{-n\theta r_n^d} \to \beta.
\end{equation}
We show that there exists such a sequence $(r_n)_{n \in \N}$ and that $C_0$ can be explicitly determined.
For $m \geq 1$ and $\ell \geq 0$, define $(m+\ell)_{\ell} := \prod_{i=1}^{\ell} (m+i)$.
Then,
\begin{equation*}
    n\sum_{j \geq k_n} \frac{(n\theta r_n^d)^j}{j!} e^{-n\theta r_n^d} = n \frac{(n\theta r_n^d)^{k_n}}{k_n!} \sum_{j \geq 0} \frac{(n\theta r_n^d)^j}{(k_n+j)_j} e^{-n\theta r_n^d}
\end{equation*}
holds.
If $(r_n)_{n \in \N}$ satisfies $n\theta r_n^d/k_n \to C_0 \in (0,1)$, then for some $a \in (0,1)$ and for all sufficiently large $n \in \N$, we have $\sup_{j \geq 1} (n\theta r_n^d)^j/(k_n+j)_j \leq a$.
Thus, for any $\varepsilon > 0$, there exists $J \in \N$ such that $\sum_{j \geq J+1} (n\theta r_n^d)^j/(k_n+j)_j \leq \varepsilon$ for all sufficiently large $n \in \N$.
Moreover, we have
\begin{equation*}
    \sum_{j=0}^J \frac{(n\theta r_n^d)^j}{(k_n+j)_j} \xrightarrow{n \to \infty} \sum_{j=0}^J C_0^j
\end{equation*}
and thus by choosing $J \in \N$ such that $\sum_{j \geq J+1} C_0^j \leq \varepsilon$, we obtain
\begin{equation*}
    \limsup_{n \to \infty} \bigg|\sum_{j=0}^\infty \frac{(n\theta r_n^d)^j}{(k_n+j)_j} - \sum_{j=0}^\infty C_0^j\bigg| \leq 2\varepsilon.
\end{equation*}
Since $\varepsilon > 0$ is arbitrary, we conclude that $\sum_{j=0}^\infty (n\theta r_n^d)^j/(k_n+j)_j \to (1-C_0)^{-1}$ as $n \to \infty$.
Thus, it holds by Stirling's approximation that
\begin{equation*}
    \begin{aligned}
        n \frac{(n\theta r_n^d)^{k_n}}{k_n!} \sum_{j \geq 0} \frac{(n\theta r_n^d)^j}{(k_n+j)_j} e^{-n\theta r_n^d} &\sim n \frac{(n\theta r_n^d)^{k_n}}{k_n!} \sum_{j \geq 0} \frac{1}{1-C_0} e^{-n\theta r_n^d}\\
        &\sim \frac{n}{\sqrt{2\pi k_n}} \bigg(\frac{en\theta r_n^d}{k_n}\bigg)^{k_n} \frac{1}{1-C_0} e^{-n\theta r_n^d}.
    \end{aligned}
\end{equation*}
To make the expectation asymptotic to $\beta$, it suffices to choose $(r_n)_{n \in \N}$ such that
\begin{equation*}
    \frac{n}{\sqrt{2\pi k_n}} \bigg(\frac{en\theta r_n^d}{k_n}\bigg)^{k_n} \frac{1}{1-C_0} e^{-n\theta r_n^d} = \beta.
\end{equation*}
Since $W_0$ is the inveerse function of $te^t$ for $t \in [-1,\infty)$, we have
\begin{equation*}
    \frac{n\theta r_n^d}{k_n} = -W_0\bigg(-\frac{1}{e} \bigg(\frac{\beta (1-C_0)\sqrt{2\pi k_n}}{n}\bigg)^{1/k_n}\bigg).
\end{equation*}
Assuming that $k_n / \log n \to b$, it follows from the continuity of $W_0$ that
\begin{equation*}
    \frac{n\theta r_n^d}{k_n} \xrightarrow{n \to \infty} -W_0\bigg(-\frac{1}{e^{1+1/b}}\bigg) \in (0,1).
\end{equation*}
Hence, we can obtain such a sequence $(r_n)_{n \in \N}$ that meets \eqref{equation.01} by setting
\begin{equation}\label{equation.02}
    \frac{n\theta r_n^d}{k_n} = -W_0\bigg(-\frac{1}{e} \bigg(\frac{\beta (1+W_0(-e^{-1-1/b}))\sqrt{2\pi k_n}}{n}\bigg)^{1/k_n}\bigg)
\end{equation}
for each $n \in \N$.

In what follows, we show that $\sum_{j=k_n}^{n-1} W_{j,n}$ can be approximated by a Poisson random variable.
We use the following lemma:
\begin{lemma}[{\cite[Theorem~6.7]{MR1986198}}]\label{Lemma.01}
    Assume that $f$ is bounded.
    Let $A \subset \Nnn$, $r > 0$ and $\lambda > 0$.
    Define $\CP_{\lambda}^x := \CP_{\lambda} + \delta_x$ and 
    \begin{equation*}
        \begin{aligned}
            &I_1 := \lambda^2 \int_{\Rd} F(dx) \int_{B(x; 3r)} F(dy) \Prob[\CP_{\lambda}(B(x; r)) \in A] \Prob[\CP_{\lambda}(B(y; r)) \in A],\\
            &I_2 := \lambda^2 \int_{\Rd} F(dx) \int_{B(x; 3r)} F(dy) \Prob[\{\CP_{\lambda}^y(B(x; r)) \in A\} \cap \{\CP_{\lambda}^x(B(y; r)) \in A\}].
        \end{aligned}
    \end{equation*}
    Then, for $W_{A,\lambda}' := \sum_{j \in A} W_{j,\lambda}'$, the number of vertices with degree in $A$ in $G(\CP_{\lambda}; r)$, it holds that
    \begin{equation*}
        d_{\mathrm{TV}}(W_{A,\lambda}', \mathrm{Po}(\E[W_{A,\lambda}'])) \leq 3(I_1 + I_2),
    \end{equation*}
    where $d_{\mathrm{TV}}$ denotes the total variation distance.
\end{lemma}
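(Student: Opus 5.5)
The plan is the Stein--Chen method via the Arratia--Goldstein--Gordon local-dependence bound, applied to a discretization of $\CP_{\lambda}$, and then letting the mesh go to zero. I will not use the infinite-dimensional Palm version directly. Fix $\varepsilon>0$ and partition $\Rd$ into half-open cubes $(Q_{i,\varepsilon})_{i}$ of side $\varepsilon$. Define $\xi_{i,\varepsilon}$ to be the indicator of the event that $\CP_{\lambda}(Q_{i,\varepsilon})=1$ and that the unique point $x$ of $\CP_{\lambda}$ in $Q_{i,\varepsilon}$ satisfies $\CP_{\lambda}(B(x;r))-1\in A$, i.e.\ its degree lies in $A$. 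Set $W_\varepsilon:=\sum_i \xi_{i,\varepsilon}$.

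\textbf{Step 1: $W_\varepsilon$ approximates $W'_{A,\lambda}$.} Since $\CP_{\lambda}$ is a.s.\ locally finite and simple, for each bounded window, once $\varepsilon$ is small every cube holds at most one point. Hence $W_\varepsilon\to W_{A,\lambda}'$ a.s.\ as $\varepsilon\to0$. If $\E[W'_{A,\lambda}]<\infty$, domination by $\CP_{\lambda}(\Rd)$ also gives $\E[W_\varepsilon]\to\E[W'_{A,\lambda}]$. It therefore suffices to bound $d_{\mathrm{TV}}(W_\varepsilon,\mathrm{Po}(\E W_\varepsilon))$ and pass to the limit, using that the Poisson total variation distance is continuous in the mean.

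\textbf{Step 2: Arratia--Goldstein--Gordon.} Take the dependency neighbourhood $N_i:=\{j : \mathrm{dist}(Q_{i,\varepsilon},Q_{j,\varepsilon})\le 2r\}$. The variable $\xi_{i,\varepsilon}$ is measurable with respect to $\CP_{\lambda}$ restricted to the $r$-neighbourhood of $Q_{i,\varepsilon}$. By the independence of Poisson processes on disjoint sets, $\xi_{i,\varepsilon}$ is independent of $\{\xi_{j,\varepsilon}: j\notin N_i\}$, so the third AGG term $b_3$ vanishes. This gives
\begin{equation*}
d_{\mathrm{TV}}(W_\varepsilon,\mathrm{Po}(\E W_\varepsilon))\le 2(b_1+b_2), \qquad b_1=\sum_i\sum_{j\in N_i}\E\xi_i\E\xi_j, \qquad b_2=\sum_i\sum_{j\in N_i, j\ne i}\E[\xi_i\xi_j].
\end{equation*}

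\textbf{Step 3: identifying the limits of $b_1$ and $b_2$.} We have $\E\xi_{i,\varepsilon}=\lambda\int_{Q_{i,\varepsilon}}F(dx)\,\Prob[\CP_{\lambda}(B(x;r))\in A]+O(\lambda^2F(Q_{i,\varepsilon})^2)$. The error comes from the exclusion of other points in the cube and from the small changes in the ball counts when $x$ moves within the cube. For $j\ne i$, the Mecke formula gives $\E[\xi_i\xi_j]$ up to $O(\varepsilon$-small$)$ corrections as
\begin{equation*}
\lambda^2\int_{Q_i}F(dx)\int_{Q_j}F(dy)\,\Prob[\{\CP^y_{\lambda}(B(x;r))\in A\}\cap\{\CP^x_{\lambda}(B(y;r))\in A\}].
\end{equation*}
Cubes in $N_i$ lie within distance $2r+2\sqrt d\,\varepsilon<3r$ of $Q_i$ for small $\varepsilon$, so they lie in $B(x;3r)$ for $x\in Q_i$. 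Summing, $\limsup_{\varepsilon\to0} b_1\le I_1$ and $\limsup_{\varepsilon\to0}b_2\le I_2$. The diagonal terms $\sum_i(\E\xi_i)^2\le \lambda f_{\max}\varepsilon^d\,\E W_\varepsilon$ are negligible.

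The resulting constant $2$ is at most the stated $3$, with room to absorb the errors. The main obstacle is Step 3: justifying that the cube-level probabilities converge to the integrands. This requires controlling the events that some point of $\CP_{\lambda}$ lies within distance $O(\varepsilon)$ of $\partial B(x;r)$, or that a cube contains two points. I would bound these by $\lambda f_{\max}\mathrm{Leb}(\{z:\,||z-x|-r|\le c\varepsilon\})\to0$, using boundedness of $f$, uniformly in $x$. Dominated convergence over $x$ with respect to $F$ then finishes the argument. The case $\E W'_{A,\lambda}=\infty$ is trivial, since then $I_1=\infty$.
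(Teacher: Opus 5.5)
Your proof is correct. It shares its skeleton with the paper: discretize into small cubes, apply a local-dependence Stein--Chen bound, then let the mesh go to zero. The paper proves the lemma by citing Penrose's Theorem~6.7, with the Lebesgue density theorem replacing a.e.\ continuity of $f$, and it carries out the same scheme explicitly in its proof of Lemma~\ref{Lemma.04}.

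Where you genuinely differ is the choice of indicator. The paper, following Penrose, uses $\mathbf{1}\{\CP_{\lambda}(H_{m,i})=1\}\cap\{\CP_{\lambda}(B(a_{m,i};r)\setminus H_{m,i})\in A\}$, which is built from the cube \emph{centre} $a_{m,i}$. Each indicator is then a function of counts in a fixed set, so the dependency graph is immediate. The cost is that $p_i$ and $p_{ii'}$ only approximate the integrands of $I_1$ and $I_2$, so identifying the limit requires the Lebesgue density theorem plus bounded convergence.

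Your indicator is tied to the actual position of the point, so the (bivariate) Mecke formula gives exact expressions:
\[
\E\xi_i=\lambda\int_{Q_i}\Prob[\CP_{\lambda}(Q_i)=0,\ \CP_{\lambda}(B(x;r))\in A]\,F(dx),
\]
and, for $j\neq i$,
\[
\E[\xi_i\xi_j]\le\lambda^2\int_{Q_i}F(dx)\int_{Q_j}F(dy)\,\Prob[\{\CP_{\lambda}^y(B(x;r))\in A\}\cap\{\CP_{\lambda}^x(B(y;r))\in A\}].
\]
Once $2r$ plus twice the cube diameter is below $3r$, you get $b_1\le I_1$ and $b_2\le I_2$ outright, with no limit taken. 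The only limiting step left is that $W_\varepsilon=W'_{A,\lambda}$ eventually a.s.\ and $\E W_\varepsilon\to\E W'_{A,\lambda}$. This uses only that $F$ is non-atomic.

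So the ``main obstacle'' you anticipate in Step~3 (points within $O(\varepsilon)$ of $\partial B(x;r)$, and convergence of cube-level probabilities) never arises for your indicators. It is an artifact of centre-based discretization. Your route in fact needs neither the Lebesgue density theorem nor boundedness of $f$. It only requires the equally easy observation that $\xi_i$ is measurable with respect to $\CP_{\lambda}$ restricted to the $r$-neighbourhood of $Q_i$.

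Two minor points. First, Lemma~\ref{Lemma.05}, like Penrose's Theorem~2.1, is stated for finitely many indicators, and $f$ need not have compact support. You should therefore either quote a countable-index version of the Arratia--Goldstein--Gordon bound, or first keep only the cubes inside a large box and let the box grow. The truncated $b$-terms are dominated by the full ones, and the probability that $\CP_{\lambda}$ charges the complement of the box tends to $0$. Second, the case $\E W'_{A,\lambda}=\infty$ is vacuous, since $W'_{A,\lambda}\le\CP_{\lambda}(\Rd)$, which has mean $\lambda$.
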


\begin{remark}
    In \cite[Theorem~6.7]{MR1986198}, the proof is provided under the assumption that the density $f$ of $X_1$ is almost everywhere continuous.
    We can dispense with this assumption by applying the Lebesgue density theorem, and thus we state Lemma~\ref{Lemma.01} for a general bounded density $f$.
    The proof for this general case is essentially contained in the proof of \cite[Proposition~4.7]{otsuka2026limit}, to which the interested reader is referred.
\end{remark}

\begin{lemma}\label{Lemma.02}
    Suppose that $d \geq 2$ and $(\lambda(n))_{n \in \N}$ satisfies $n-n^{3/4} \leq \lambda(n) \leq n+n^{3/4}$ for each $n \in \N$.
    Let $f$ be the density function $\mathbf{1}_{[-1/2,1/2]^d}$.
    If a sequence $(r_n)_{n \in \N}$ satisfies \eqref{equation.01}, then for any two integers $0 \leq J_1 < J_2 \leq \infty$, it holds that $\sum_{j = k_n+J_1}^{k_n+J_2} W_{j,\lambda(n)}' \xrightarrow{d} \mathrm{Po}(\beta (1-C_0)\sum_{j=J_1}^{J_2} C_0^j)$.
\end{lemma}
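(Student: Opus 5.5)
Set $A := \{k_n+J_1,\dots,k_n+J_2\}$ (all $j \geq k_n+J_1$ if $J_2=\infty$) and write $\mu := \lambda(n)\theta r_n^d$. The argument has two parts: first show that $\E[W_{A,\lambda(n)}'] \to \beta(1-C_0)\sum_{j=J_1}^{J_2} C_0^j$, then apply Lemma~\ref{Lemma.01} and show $I_1+I_2 \to 0$. Convergence in total variation to a Poisson law with convergent mean then gives the claim.

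\textbf{Mean.} Since $\lambda(n) = n(1+O(n^{-1/4}))$, we have $\mu - n\theta r_n^d = O(n^{-1/4}k_n) \to 0$. Hence $\mu^j e^{-\mu} = (n\theta r_n^d)^j e^{-n\theta r_n^d}(1+o(1))$ uniformly for $j \leq k_n + O(1)$. The geometric tail bound from the computation after \eqref{equation.01} controls large $j$, since $(n\theta r_n^d)^j/(k_n+j)_j \leq a^j$, and this also covers $J_2=\infty$.

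The boundary of the cube has to be handled. Points within distance $r_n$ of $\partial[-1/2,1/2]^d$ see a ball of volume at most $(1-c)\theta r_n^d$, so their degree is Poisson with a smaller mean $\mu' \leq (1-c)\mu$. We then bound $\Prob[\mathrm{Po}(\mu') \geq k_n]$ by a Chernoff estimate, which gives $\exp(-k_n H(k_n/\mu'))$. Multiplying by the boundary volume $O(r_n)$ and by $\lambda(n)$ yields $n^{1-(1+\delta)}\cdot n^{o(1)}\to 0$, because $H(k_n/\mu')$ is strictly larger than the $H$-value $\approx 1/(b\cdot\ldots)$ that makes the interior term of order $1/n$. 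I have not checked this strict inequality beyond the heuristic that the boundary mean is a fixed fraction smaller. For interior points, the mean then equals $n\,\frac{\mu^{k_n}}{k_n!}e^{-\mu}\sum_{j=J_1}^{J_2}\frac{\mu^j}{(k_n+j)_j}(1+o(1))$. Its ratio to the full sum $\sim\beta(1-C_0)^{-1}$ tends to $(1-C_0)\sum_{J_1}^{J_2}C_0^j$, by the same dominated-convergence argument as before.

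\textbf{$I_1$.} This term is at most $\E[W_A']\cdot \lambda \theta(3r_n)^d \sup_y \Prob[\CP_\lambda(B(y;r_n))\in A] \leq C\cdot k_n\cdot O(1/n)\to0$. Here we use that the single-point probability is $O(\beta/n)$ up to constants, and $k_n = O(\log n)$.

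\textbf{$I_2$ (main obstacle).} Write $|x-y| = s$ and let $D := B(x;r_n)\cap B(y;r_n)$. For $s \leq r_n$ the two degree events share the count in $D$, and for $r_n < s \leq 3r_n$ they are independent. For the independent range, $I_2$ is bounded exactly as $I_1$. For $s \leq r_n$, let $N_D$, $N_1$, $N_2$ be independent Poisson variables counting points in $D$ and in the two lune-shaped differences, with means $\mu_D$ and $\mu_L = \mu - \mu_D$. We need $\Prob[N_D+N_1 \geq k_n-1,\ N_D+N_2\geq k_n-1]$. Conditioning on $N_D = \ell$ and using Chernoff bounds, this is at most $\exp(-k_n \Psi(s/r_n))$, where $\Psi$ solves a two-dimensional large-deviation optimisation. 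Since $\mu_L \geq c\,(s/r_n)\mu$ for $d\geq2$, $\Psi(u) \geq H_0 + c\,u$, where $\exp(-k_nH_0)\approx 1/n$ is the one-point rate. Integrating gives
\begin{equation*}
I_2 \lesssim n^2 \int_0^{r_n} s^{d-1} n^{-1}e^{-c k_n s/r_n}\,\frac{1}{n}\,n^{o(1)} ds \cdot \ldots,
\end{equation*}
which after the substitution $u = s/r_n$ becomes $n r_n^d \int_0^1 u^{d-1}e^{-ck_nu}du \cdot O(1/n)\cdot n \approx k_n\cdot k_n^{-d}$. This tends to $0$ for $d\geq2$.

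The polynomial prefactors, namely Stirling's $k_n^{-1/2}$ and the $n^{o(1)}$ losses, must be tracked carefully enough that the extra $k_n^{1-d}$ survives. The hardest step is making the joint tail bound $\Prob[\cdots] \leq \frac{C}{n}k_n^{c'} e^{-ck_n s/r_n}$ precise with only polynomial losses in $k_n$. I would obtain it by a local estimate on $N_D$ near $\mu_D$ together with the Poisson ratio $\Prob[N\geq k-t]/\Prob[N\geq k]\leq (k/\mu)^t$. The bound must also hold uniformly for near-boundary $x$, where it suffices that the volumes only shrink.
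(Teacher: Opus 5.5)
Your architecture matches the paper's: compute the mean, apply Lemma~\ref{Lemma.01}, bound $I_1=O(k_n/n)$, then bound the correlated part of $I_2$ by something decaying like $e^{-ck_n|x-y|/r_n}$ and integrate to $k_n^{1-d}$. The $I_2$ step, however, is the whole content of the lemma, and it has two problems.

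First, the two degree events are \emph{not} independent for $r_n<|x-y|<2r_n$. No Palm point is added, but $B(x;r_n)\cap B(y;r_n)\neq\emptyset$ and both counts include its points. On that range the $I_1$-type bound is unavailable. The trivial bound $h\le \Prob[\CP_{\lambda(n)}(B(y;r_n))\in A]=O(1/n)$ only gives a contribution of order $nr_n^d\asymp k_n$. You need an extra factor $e^{-ck_n}$, which comes from the lune $B_x\setminus B_y$ having volume fraction bounded below there.

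Second, the estimate $\Psi(u)\ge H_0+cu$ with a controlled prefactor is only asserted, and you leave the prefactor bookkeeping open. That bookkeeping is decisive. Since $k_n\asymp\log n$, any $n^{o(1)}$ loss swallows $k_n^{1-d}$. Even a loss of order $k_n^{d-1}$ (a single factor $k_n$ when $d=2$) already prevents $I_2\to 0$. Note also that $e^{-k_nH_0}=e^{-\mu H(k_n/\mu)}$ is of order $\sqrt{k_n}/n$, not $1/n$.

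The paper avoids joint large deviations altogether by conditioning. For $y\in B(x;r_n)$ it writes
\[
h=\Prob[\CP(B_y)+1\in A]\cdot\Prob[\CP(B_x)+1\in A\mid \CP(B_y)+1\in A].
\]
The first factor is the exact one-point probability, $O(1/n)$, with no Chernoff loss. For the second factor, given $\CP(B_y)=m$, the count in $B_x$ equals $m-\mathrm{Bi}(m,F^n_{y\setminus x})+\mathrm{Po}(\lambda(n)r_n^dF_n(B_x\setminus B_y))$. So the event forces a Poisson variable of mean about $C_0k_nF_{y-x}$ to be at least a binomial one of mean about $k_nF_{y-x}$, minus $J_2-J_1$. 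Two one-dimensional tail bounds then give $e^{-c_1k_nF_{y-x}}$ with no prefactor, as long as $J_2-J_1\le\alpha k_nF_{y-x}$. The leftover region $|x-y|\lesssim r_n/k_n$ costs $O(k_n^{1-d})$ using only $h\le c/n$. The same argument without the $+1$'s covers $y\notin B(x;r_n)$.

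Your own route can also be closed. On the event, $2N_D+N_1+N_2\ge 2k$, where $k$ is the threshold. A single exponential Markov bound with $e^{t}=\sqrt{k/\mu}$ gives
\[
\exp\bigl(-\mu H(k/\mu)-\mu_L(\sqrt{k/\mu}-1)^2\bigr)\lesssim \sqrt{k_n}\,n^{-1}e^{-ck_n\mu_L/\mu}.
\]
This loses exactly $\sqrt{k_n}$, so $I_2=O(k_n^{3/2-d})\to0$ for $d\ge2$. The same bound applies verbatim to $|x-y|<2r_n$, and to boundary points by monotonicity in the means.

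A minor point in your mean computation: it is false that every point within $r_n$ of the boundary loses a fixed fraction of its ball. You do not need this, though. Since $\nu\mapsto\Prob[\mathrm{Po}(\nu)\in A]$ is increasing for $\nu\le\mu<k_n$, the boundary layer contributes only $O(r_n)$.
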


\begin{proof}
    First, we assume that $J_2 < \infty$.
    Set $A_n := \{\ell \in \Nnn : k_n+J_1 \leq \ell \leq k_n+J_2\}$.
    Note that
    \begin{equation*}
        \E[W_{A_n,\lambda(n)}'] \xrightarrow{n \to \infty} \beta (1-C_0)\sum_{j=J_1}^{J_2} C_0^j
    \end{equation*}
    holds since $(\lambda(n)/n)^{k_n} \to 1$ as $n \to \infty$.
    By Lemma~\ref{Lemma.01}, we have
    \begin{equation*}
        \begin{aligned}
            &d_{\mathrm{TV}}(W_{A_n,\lambda(n)}', \mathrm{Po}(\E[W_{A_n,\lambda(n)}'])) \leq 3(I_1(n)+I_2(n)),\\
            &I_1(n) := \lambda(n)^2 \int_{\Rd} F(dx) \int_{B(x; 3r_n)} F(dy) \Prob[\CP_{\lambda(n)}(B_x^n) \in A_n] \Prob[\CP_{\lambda(n)}(B_y^n) \in A_n],\\
            &I_2(n) := \lambda(n)^2 \int_{\Rd} F(dx) \int_{B(x; 3r_n)} F(dy) \Prob[\{\CP_{\lambda(n)}^y(B_x^n) \in A_n\} \cap \{\CP_{\lambda(n)}^x(B_y^n) \in A_n\}],
        \end{aligned}
    \end{equation*}
    where $B_x^n := B(x; r_n)$.
    We show that $I_1(n), I_2(n) \to 0$.

    By the definition of $I_1(n)$ and the condition on $(\lambda(n))_{n \in \N}$, we obtain
    \begin{equation*}
        I_1(n) \leq \bigg(\lambda(n) \sum_{j \in A_n}\frac{(\lambda(n)\theta r_n^d)^j}{j!} e^{-\lambda(n)\theta r_n^d}\bigg)^2 3^d r_n^d \leq cr_n^d \to 0.
    \end{equation*}
    In what follows, we show that $I_2(n) \to 0$.
    In the definition of $I_2(n)$, we carry out the change of variables $x = r_nx'$, $y = r_ny'$.
    Then, for the function $h_n$ defined by
    \begin{equation*}
        h_n(x',y') := \Prob[\{\CP_{\lambda(n)r_n^d}^{y'}(B(x'; 1)) \in A_n\} \cap \{\CP_{\lambda(n)r_n^d}^{x'}(B(y'; 1)) \in A_n\}],
    \end{equation*}
    where $\CP_{\lambda(n)r_n^d}$ denotes a Poisson point process with intensity $\lambda(n)r_n^d \mathbf{1}_{[-1/(2r_n),1/(2r_n)]^d}$, it follows that
    \begin{equation*}
        I_2(n) \leq \lambda(n)^2 r_n^{2d} \int_{[-1/(2r_n),1/(2r_n)]^d} \int_{B(x'; 3)} h_n(x',y') \,dy'dx'.
    \end{equation*}
    For readability, we henceforth drop the primes from $x'$ and $y'$.
    With this convention, we write $B_x := B(x; 1)$ and $F_n(\cdot) := \mathrm{Leb}(\cdot \cap [-1/(2r_n),1/(2r_n)]^d)$.

    If $y \in B_x \setminus \{x\}$, then
    \begin{equation*}
        \begin{aligned}
            h_n(x,y) &= \Prob[\CP_{\lambda(n)r_n^d}(B_x)+1 \in A_n \mid \CP_{\lambda(n)r_n^d}(B_y)+1 \in A_n] \cdot \Prob[\CP_{\lambda(n)r_n^d}(B_y)+1 \in A_n]\\
            &\leq cn^{-1}\Prob[\CP_{\lambda(n)r_n^d}(B_x)+1 \in A_n \mid \CP_{\lambda(n)r_n^d}(B_y)+1 = k_n+J_2]
        \end{aligned}
    \end{equation*}
    holds.
    Regarding this probability, it can be shown that conditionally on $\CP_{\lambda(n)r_n^d}(B_y) = k_n+J_2-1$,
    \begin{itemize}
        \item the number of points in $B_x \cap B_y$ obeys the binomial distribution $\mathrm{Bi}(k_n+J_2-1, F_n(B_x \cap B_y)/F_n(B_y))$,
        \item the number of points in $B_x \setminus B_y$ obeys $\mathrm{Po}(\lambda(n)r_n^d F_n(B_x \setminus B_y))$,
    \end{itemize}
    and these are independent of each other.
    Therefore, we can rewrite the above as
    \begin{equation*}
        \begin{aligned}
            &\Prob[\mathrm{Bi}(k_n+J_2-1, F_n(B_x \cap B_y)/F_n(B_y)) + \mathrm{Po}(\lambda(n)r_n^d F_n(B_x \setminus B_y)) \in A_n]\\
            &= \Prob[k_n+J_2-1 - \mathrm{Bi}(k_n+J_2-1, F_n(B_y \setminus B_x)/F_n(B_y)) + \mathrm{Po}(\lambda(n)r_n^d F_n(B_x \setminus B_y)) \in A_n].
        \end{aligned}
    \end{equation*}
    Set $F_{y \setminus x}^n := F_n(B_y \setminus B_x)/F_n(B_y)$.
    If both of following two events
    \begin{equation*}
        \begin{aligned}
            &\{\mathrm{Bi}(k_n+J_2-1, F_{y \setminus x}^n) > k_n F_{y \setminus x}^n (1+C_0)/2\},\\
            &\{\mathrm{Po}(\lambda(n)r_n^d F_n(B_x \setminus B_y)) < k_n F_{y \setminus x}^n (1+C_0)/2 - J_2 + J_1\}
        \end{aligned}
    \end{equation*}
    occur, then the event in the above probability fails to occur.
    Thus, it is bounded by
    \begin{equation*}
        \begin{aligned}
            &\Prob[\mathrm{Bi}(k_n+J_2-1, F_{y \setminus x}^n) \leq k_n F_{y \setminus x}^n (1+C_0)/2]\\
            &+ \Prob[\mathrm{Po}(\lambda(n)r_n^d F_n(B_x \setminus B_y)) \geq k_n F_{y \setminus x}^n (1+C_0)/2 - J_2 + J_1].
        \end{aligned}
    \end{equation*}
    By the tail estimates for binomial and Poisson distributions (see Appendix), this is at most
    \begin{equation*}
        \exp\bigg(-k_n F_{y \setminus x}^n H\bigg(\frac{1+C_0}{2}\bigg)\bigg) + \exp\bigg(-\lambda(n) r_n^d F_n(B_x \setminus B_y) H\bigg(\frac{k_n F_{y \setminus x}^n (1+C_0)/2 - J_2 + J_1 }{\lambda(n)r_n^d F_n(B_x \setminus B_y)}\bigg)\bigg)
    \end{equation*}
    for all sufficiently large $n$.

    Similarly, if $y \notin B_x$, then it holds that
    \begin{equation*}
        h_n(x,y) \leq cn^{-1} \Prob[\CP_{\lambda(n)r_n^d}(B_x) \in A_n \mid \CP_{\lambda(n)r_n^d}(B_y) = k_n+J_2]
    \end{equation*}
    and the same upper bound for the conditional probability applies here as well.
    Now, if $x \in [-1/(2r_n),1/(2r_n)]^d$ is at a distance of at least $4$ from the boundary, then it holds that $F_{y \setminus x}^n = F_{y-x} := \mathrm{Leb}(B_{y-x} \setminus B_{\bz})/\theta$ for any $y \in B(x; 3)$.
    Furthermore, if $J_2-J_1 \leq \alpha k_n F_{y-x}$ for a sufficiently small $\alpha > 0$, it follows that
    \begin{equation*}
        \begin{aligned}
            h_n(x,y) &\leq cn^{-1} \bigg(\exp\bigg(-k_n F_{y-x} H\bigg(\frac{1+C_0}{2}\bigg)\bigg) + \exp\bigg(-\lambda(n)\theta r_n^d F_{y-x} H\bigg(\frac{1+2C_0}{3C_0}\bigg)\bigg)\bigg)\\
            &\leq cn^{-1} \exp(-c_1 F_{y-x} k_n).
        \end{aligned}
    \end{equation*}
    We now note the facts that $\inf_{z \in B(\bz; 3)} (F_z/\|z\|_2) > 0$ (see \cite[Proposition~5.16]{MR1986198}), $r_n^d = \Theta(\log n/n)$ and $k_n = \Theta(\log n)$. 
    Putting all these estimates together, we arrive at
    \begin{equation*}
        \begin{aligned}
            I_2(n) &\leq c\lambda(n)^2 r_n^{2d} n^{-1} \bigg(\int_{[-1/(2r_n),1/(2r_n)]^d} \int_{B(x; 3)} \exp(-c_1 F_{y-x} k_n) \,dydx + r_n^{1-d} + r_n^{-d}k_n^{-d}\bigg)\\
            &\leq c k_n r_n^d \bigg(r_n^{-d} \int_{B(\bz; 3)} \exp(-c_1 k_n \|z\|_2) \,dz + r_n^{1-d} + k_n^{-d}\bigg)\\
            &\leq c(k_n^{1-d} + k_n r_n)\\
            &\to 0,
        \end{aligned}
    \end{equation*}
    where $\|\cdot\|_2$ denotes the Euclidean norm.
    This completes the proof for the case where $J_2 < \infty$.
    The case where $J_2 = \infty$ is readily shown by Lemma~\ref{Lemma.07} in Appendix.
\end{proof}

Lemma~\ref{Lemma.02} provides a Poisson approximation result for the sum $\sum_{j=k_n+J_1}^{k_n+J_2} W_{j,n}'$ under the Poissonized model.
To obtain the result for the binomial model, we need to de-Poissonize this result.

\begin{proposition}\label{Proposition.01}
    Let $d \geq 2$ and $f = \mathbf{1}_{[-1/2,1/2]^d}$.
    If a sequence $(r_n)_{n \in \N}$ satisfies \eqref{equation.01}, then for any two integers $0 \leq J_1 < J_2 \leq \infty$, it holds that $\sum_{j = k_n+J_1}^{k_n+J_2} W_{j,\lambda(n)} \xrightarrow{d} \mathrm{Po}(\beta (1-C_0)\sum_{j=J_1}^{J_2} C_0^j)$.
\end{proposition}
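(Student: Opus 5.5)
The plan is to read $W_{j,\lambda(n)}$ as the number of vertices of degree $j$ in $G(\CX_{\lambda(n)}; r_n)$, the binomial graph on $\lambda(n)$ points with $\lambda(n)$ an integer in $[n-n^{3/4}, n+n^{3/4}]$. I will de-Poissonize Lemma~\ref{Lemma.02} by coupling, uniformly over such $\lambda(n)$. Write $A_n := \{k_n+J_1,\dots,k_n+J_2\}$ and $W_{A_n}(\CY)$ for the number of vertices of $G(\CY; r_n)$ whose degree lies in $A_n$.

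\textbf{Coupling.} Put $\lambda_\pm(n) := n \pm 2n^{3/4}$ and use the construction of the paper, in which $\CP_{\lambda} = \sum_{i \leq N_\lambda} \delta_{X_i}$. Take a Poisson process on $[0,\infty)$ and let $N_{\lambda_-(n)} \leq N_{\lambda_+(n)}$ be its counts, with the same sequence $(X_i)$. Then $\CP_{\lambda_-(n)} \subset \CX_{\lambda(n)} \subset \CP_{\lambda_+(n)}$ on the event
\begin{equation*}
E_n := \{N_{\lambda_-(n)} \leq \lambda(n) \leq N_{\lambda_+(n)}\}.
\end{equation*}
By Chebyshev, $\Prob[E_n^c] \leq c\, n / n^{3/2} \to 0$, since the gaps $\lambda(n)-\lambda_-(n)$ and $\lambda_+(n)-\lambda(n)$ are at least $n^{3/4}$. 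The proof of Lemma~\ref{Lemma.02} only uses $|\lambda - n| = O(n^{3/4})$, because what it needs is $(\lambda/n)^{k_n} \to 1$ and $\lambda \leq cn$. So it applies verbatim to $\lambda_-(n)$, giving $W_{A_n}(\CP_{\lambda_-(n)}) \xrightarrow{d} \mathrm{Po}(\beta(1-C_0)\sum_{j=J_1}^{J_2} C_0^j)$. It therefore suffices to show
\begin{equation*}
\Prob\big[E_n \cap \{W_{A_n}(\CX_{\lambda(n)}) \neq W_{A_n}(\CP_{\lambda_-(n)})\}\big] \to 0 .
\end{equation*}

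\textbf{Difference bound.} On $E_n$, let $\CY_n := \CP_{\lambda_+(n)} - \CP_{\lambda_-(n)}$. Conditionally on $N_{\lambda_-(n)}$, this is a binomial process of $O_P(n^{3/4})$ points independent of $\CP_{\lambda_-(n)}$, and it is dominated by a Poisson process of intensity $4n^{3/4} f$. The counts can differ only if one of two things happens.
\begin{itemize}
\item[(a)] Some added point $Y \in \CY_n$ has degree at least $k_n+J_1$ in $G(\CP_{\lambda_+(n)}; r_n)$. By Mecke's formula its expected number is at most $c\, n^{3/4}\, \Prob[\mathrm{Po}(\lambda_+(n)\theta r_n^d) \geq k_n+J_1] \leq c\, n^{-1/4}$, since that tail is $O(1/n)$ by \eqref{equation.01}.
\item[(b)] Some $x \in \CP_{\lambda_-(n)}$ changes degree and lies in $A_n$ either before or after the change. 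For fixed $L \in \N$, split according to whether $x$ gains at most $L$ or more than $L$ added neighbours. With at most $L$, $x$ has degree at least $k_n+J_1-L$ in $\CP_{\lambda_-(n)}$ and also gains at least one neighbour. Since the gain is independent of $\CP_{\lambda_-(n)}$, Mecke's formula bounds the expected number of such $x$ by
\begin{equation*}
c\, n \,\Prob[\mathrm{Po}(n\theta r_n^d) \geq k_n+J_1-L] \cdot c\, n^{3/4} r_n^d \leq c\, C_0^{-L} \cdot n^{-1/4}\log n \to 0 .
\end{equation*}
Here I use $\Prob[\mathrm{Po}(n\theta r_n^d)=k_n-i] \approx C_0^{-i}\Prob[\mathrm{Po}(n\theta r_n^d)=k_n]$. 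With more than $L$, the probability of gaining more than $L$ neighbours is at most $(c\,n^{-1/4}\log n)^{L+1}$. Summing over all $O(n)$ points gives $c\, n^{1-(L+1)/4}(\log n)^{L+1} \to 0$ once $L \geq 4$.
\end{itemize}
Multiplicities, such as an $x$ that acquires several new neighbours, are already absorbed in these bounds. Combining (a) and (b) with $\Prob[E_n^c] \to 0$ yields the claim for $J_2 < \infty$.

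\textbf{Case $J_2=\infty$.} Here (b) only requires that the degree after the change be at least $k_n+J_1$, and the same estimate applies. Alternatively, approximate by $J_2<\infty$ and control the remainder via the tail bound $\E[\sum_{j \geq k_n+J} W_{j,\lambda(n)}] \leq c\, C_0^{J}$, by the binomial analogue of the computation preceding \eqref{equation.02}. This is the role Lemma~\ref{Lemma.07} plays.

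\textbf{Main obstacle.} I expect the main obstacle to be step (b): handling the non-monotonicity of $W_{A_n}$ in the point set, which is why the $L$-truncation is used. The key point is that the ratio of the degree density at $k_n-L$ to that at $k_n$ stays bounded (roughly $C_0^{-L}$) in the regime $k_n \sim b\log n$.
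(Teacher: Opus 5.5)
Your proof is correct, but it takes a different route from the paper.

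\textbf{What the paper does.} It uses the same kind of coupling, $\CP_n^-\subset\CX_n\subset\CP_n^+$ with intensities $n\mp n^{3/4}$. It never estimates how many points change degree. Instead it uses two facts:
\begin{itemize}
\item The tail count $\sum_{j\ge k}W_j$ is monotone in the point set. So on the coupling event, $\sum_{j\ge k}W^-_{j,n}\le\sum_{j\ge k}W_{j,n}\le\sum_{j\ge k}W^+_{j,n}$ for both $k=k_n+J_1$ and $k=k_n+J_2+1$.
\item The window count is the difference of these two tail counts.
\end{itemize}
Together these give a two-sided sandwich. Its gap is $\sum_{j\ge k_n+J_1}(W^+_{j,n}-W^-_{j,n})+\sum_{j\ge k_n+J_2+1}(W^+_{j,n}-W^-_{j,n})$, which is always nonnegative. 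Its expectation tends to $0$ because the first-moment computation in Lemma~\ref{Lemma.02} gives the same limit for both intensities $n\pm n^{3/4}$. This $L^1$-closeness transfers the Poisson limit from $\CP_n^-$ to $\CX_n$.

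\textbf{How the two routes compare.} The non-monotonicity you single out as the main obstacle is removed by the difference-of-tails trick. The paper therefore needs no Mecke computation, no truncation of the gain at $L\ge 4$, and no local ratio $C_0^{-L}$ of Poisson point masses.

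Your approach bounds the probability that the window count changes directly. It relies on regime-specific quantitative input: $n^{3/4}r_n^d=O(n^{-1/4}\log n)$ and the bounded ratio of $\Prob[\mathrm{Po}(n\theta r_n^d)=k_n-i]$ to $\Prob[\mathrm{Po}(n\theta r_n^d)=k_n]$. In return it:
\begin{itemize}
\item gives an explicit rate for the coupling error;
\item needs only upper bounds on first moments, not their exact limits;
\item treats any integer $\lambda(n)$ in the window, as the statement literally reads, whereas the paper's proof is written for $\lambda(n)=n$.
\end{itemize}

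\textbf{Two points of phrasing.} First, $\CY_n$ is exactly a Poisson process of intensity $4n^{3/4}f$ independent of $\CP_{\lambda_-(n)}$, not merely dominated by one. Second, the gains you call independent of $\CP_{\lambda_-(n)}$ should be the $\CY_n$-gains. The actual gains come from $\CX_{\lambda(n)}-\CP_{\lambda_-(n)}$, which depends on $N_{\lambda_-(n)}$. On $E_n$ they are dominated by the $\CY_n$-gains, and that domination is all your bounds use.
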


\begin{proof}
    First, we assume that $J_2 < \infty$.
    Let $N_n^-$ and $M_n$ be independent Poisson random variables with mean $n-n^{3/4}$ and $2n^{3/4}$, respectively.
    Define two Poisson point processes
    \begin{equation*}
        \CP_n^- := \sum_{i=1}^{N_n^-} \delta_{X_i}, \quad \CP_n^+ := \sum_{i=1}^{N_n^- + M_n} \delta_{X_i}
    \end{equation*}
    and let $W_{j,n}^-, W_{j,n}^+$ denote the number of vertices with degree $j$ in $G(\CP_n^-; r_n), G(\CP_n^+; r_n)$, respectively.
    Then, with high probability, it holds that $\CP_n^- \subset \CX_n \subset \CP_n^+$ and thus we have the following inequalities:
    \begin{align}
        &\sum_{j=k_n+J_1}^\infty W_{j,n}^- \leq \sum_{j=k_n+J_1}^{n-1} W_{j,n} \leq \sum_{j=k_n+J_1}^\infty W_{j,n}^+, \label{equation.04}\\
        &\sum_{j=k_n+J_2+1}^\infty W_{j,n}^- \leq \sum_{j=k_n+J_2+1}^{n-1} W_{j,n} \leq \sum_{j=k_n+J_2+1}^\infty W_{j,n}^+. \label{equation.05}
    \end{align}
    Subtracting (\ref{equation.05}) from (\ref{equation.04}) yields
    \begin{equation*}
        \sum_{j=k_n+J_1}^{k_n+J_2} W_{j,n}^- + \sum_{j=k_n+J_2+1}^\infty (W_{j,n}^- - W_{j,n}^+) \leq \sum_{j=k_n+J_1}^{k_n+J_2} W_{j,n} \leq \sum_{j=k_n+J_1}^{k_n+J_2} W_{j,n}^+ + \sum_{j=k_n+J_2+1}^\infty (W_{j,n}^+ - W_{j,n}^-).
    \end{equation*}
    By the construction of $\CP_n^-$ and $\CP_n^+$, the left-most side of the above inequality is always bounded above by the right-most side.
    In addition, the expectation of their difference vanishes, which in particular yields the $L^1$-convergence to $0$.
    From Lemma~\ref{Lemma.02}, we have
    \begin{equation*}
        \sum_{j=k_n+J_1}^{k_n+J_2} W_{j,n}^- \xrightarrow{d} \mathrm{Po}\bigg(\beta (1-C_0) \sum_{j=J_1}^{J_2} C_0^j\bigg)
    \end{equation*}
    holds and the above observation yields the same result for $\sum_{j=k_n+J_1}^{k_n+J_2} W_{j,n}$.
    The case where $J_2 = \infty$ is simpler since we do not need (\ref{equation.05}).
\end{proof}

\begin{proof}[Proof of Theorem~\ref{Theorem.01}]
    Set $x \in \R$ and $\beta := e^{-x}$.
    Then, there exists a sequence $(\alpha_n)_{n \in \N}$ such that $\lim_{n \to \infty} \alpha_n = 0$ and
    \begin{equation*}
        \beta = \frac{1}{(1-C_0) \sqrt{2\pi k_n}} \bigg(1-\frac{x-\log ((1-C_0) \sqrt{2\pi k_n}) + \alpha_n}{k_n}\bigg)^{k_n}.
    \end{equation*}
    For simplicity, we write $x_n := x-\log ((1-C_0) \sqrt{2\pi k_n}) + \alpha_n$ and then by the Taylor expansion of $W_0$, we have
    \begin{equation*}
        \begin{aligned}
            -W_0\bigg(-\frac{1}{e} \bigg(\frac{\beta (1-C_0) \sqrt{2\pi k_n}}{n}\bigg)^{1/k_n}\bigg) &= -W_0\bigg(-\frac{1}{en^{1/k_n}} \bigg(1-\frac{x_n}{k_n}\bigg)\bigg)\\
            &= \sum_{m=1}^\infty \frac{m^{m-1}}{m!} \bigg(\frac{1}{en^{1/k_n}} \bigg(1-\frac{x_n}{k_n}\bigg)\bigg)^m.
        \end{aligned}
    \end{equation*}
    Again, by the Taylor expansion for $(1-t)^m$, there exists an $\eta \in (0,1)$ such that
    \begin{equation*}
        \bigg(1-\frac{x_n}{k_n}\bigg)^m = 1 - \frac{mx_n}{k_n} + \binom{m}{2} \bigg(1-\frac{\eta x_n}{k_n}\bigg)^{m-2} \bigg(\frac{x_n}{k_n}\bigg)^2.
    \end{equation*}
    Therefore, we have
    \begin{align*}
        &\sum_{m=1}^\infty \frac{m^{m-1}}{m!} \bigg(\frac{1}{en^{1/k_n}} \bigg(1-\frac{x_n}{k_n}\bigg)\bigg)^m\\
        &= -\sum_{m=1}^\infty \frac{(-m)^{m-1}}{m!} \bigg(-\frac{1}{en^{1/k_n}}\bigg)^m + \sum_{m=1}^\infty \frac{(-m)^{m-1}}{(m-1)!} \bigg(-\frac{1}{en^{1/k_n}}\bigg)^m \bigg(\frac{x_n}{k_n}\bigg)\\
        &\quad + O\bigg(\sum_{m=1}^\infty \frac{m^{m+1}}{m!} \bigg(\frac{1}{en^{1/k_n}}\bigg)^m \bigg(\frac{x_n}{k_n}\bigg)^2\bigg)\\
        &= -W_0\bigg(-\frac{1}{en^{1/k_n}}\bigg) - \frac{1}{en^{1/k_n}} W_0'\bigg(-\frac{1}{en^{1/k_n}}\bigg) \bigg(\frac{x_n}{k_n}\bigg) + O\bigg(\frac{x_n^2}{k_n^2}\bigg).
    \end{align*}
    If $(r_n)_{n \in \N}$ satisfies \eqref{equation.02}, then this expression equals $n\theta r_n^d/k_n$, which yields
    \begin{equation*}
        n\theta r_n^d = -W_0\bigg(-\frac{1}{en^{1/k_n}}\bigg) k_n - \frac{1}{en^{1/k_n}} W_0'\bigg(-\frac{1}{en^{1/k_n}}\bigg) x_n + O(x_n^2/k_n)
    \end{equation*}
    Therefore, we obtain
    \begin{equation*}
        \Prob[S_{k_n}(\CX_n) > r_n] = \Prob[-n\theta S_{k_n}(\CX_n)^d < -a_n k_n + b_n x_n + O(x_n^2/k_n)].
    \end{equation*}
    Recalling the definition of $x_n$, there exists a deterministic sequence $(\eta_n)_{n \in \N}$ converging to $0$ such that the probability on the right-hand side reduces to
    \begin{equation*}
        \Prob\bigg[-n\theta S_{k_n}(\CX_n)^d + a_n k_n + \frac{1}{2} b_n \log k_n + b_{\infty} \log ((1-a_{\infty}) \sqrt{2\pi}) + \eta_n < b_{\infty} x\bigg].
    \end{equation*}
    Since (\ref{equation.06}) holds and $\sum_{j \geq k_n} W_{j,n} \xrightarrow{d} \mathrm{Po}(\beta) = \mathrm{Po}(e^{-x})$ by Proposition~\ref{Proposition.01}, the above expression converges to $\exp(-\exp(-x))$.
    Further, $\eta_n$ converges in probability to $0$ and thus it follows from Slutsky's theorem that
    \begin{equation*}
        -n\theta S_{k_n}(\CX_n)^d + a_n k_n + \frac{1}{2} b_n \log k_n + b_{\infty} \log ((1-a_{\infty}) \sqrt{2\pi}) \xrightarrow{d} Z(b_{\infty}).
    \end{equation*}
    This completes the proof.
\end{proof}

Hereafter, we focus on the case where the density $f$ of $X_1$ satisfies Condition~\ref{Condition.01}.
As in the uniform case, we seek a sequence $(r_n)_{n \in \N}$ such that
\begin{equation*}
    \frac{n\theta r_n^d f_{\max}}{k_n} \to C_0 \in (0,\infty) \quad \text{and} \quad \E\bigg[\sum_{j=k_n}^\infty W_{j,n}'\bigg] \to \beta \in (0,\infty).
\end{equation*}
In this case, we have
\begin{equation*}
    \begin{aligned}
        \E\bigg[\sum_{j \geq k_n} W_{j,n}'\bigg] &= n \int_{\Rd} \sum_{j \geq k_n} \frac{(nF(B(x; r_n)))^j}{j!} e^{-nF(B(x; r_n))} f(x) \,dx\\
        &= n \frac{(n\theta r_n^d f_{\max})^{k_n}}{k_n!} e^{-n\theta r_n^d f_{\max}} \times \int_{\Rd} \bigg(\frac{F(B(x; r_n))}{\theta r_n^d f_{\max}}\bigg)^{k_n} \sum_{j \geq 0} \frac{(nF(B(x; r_n)))^j}{(k_n+j)_j}\\
        &\hspace{14em} \times \exp(n\theta r_n^d f_{\max} - nF(B(x; r_n))) f(x) \,dx.
    \end{aligned}
\end{equation*}
Using the change of variables $x = k_n^{-1/(2m)} z$ and an argument similar to that following Remark~4.6 in \cite{otsuka2026limit}, we see that the above expression is asymptotically equal to
\begin{equation*}
    \begin{aligned}
        &n \frac{(n\theta r_n^d f_{\max})^{k_n}}{k_n!} e^{-n\theta r_n^d f_{\max}} \times k_n^{-d/(2m)} \int_{\Rd} \exp\bigg(\frac{(1-C_0) \partial^{2m}f(\bz)[z^{\otimes 2m}]}{(2m)! f_{\max}}\bigg) \frac{f_{\max}}{1-C_0} \,dz\\
        &= \gamma (f_{\max}/(1-C_0))^{1+d/(2m)} \times n \frac{(n\theta r_n^d f_{\max})^{k_n}}{k_n! k_n^{d/(2m)}} e^{-n\theta r_n^d f_{\max}},
    \end{aligned}
\end{equation*}
where $\gamma := \int_{\Rd} \exp(\partial^{2m}f(\bz)[z^{\otimes 2m}]/((2m)!)) \,dz$.
Therefore, to ensure that the expectation $\E[\sum_{j \geq k_n} W_{j,n}']$ converges to $\beta$, it suffices by Stirling's formula to take
\begin{equation*}
    \frac{n\theta r_n^d f_{\max}}{k_n} = -W_0\Bigg(-\frac{1}{e}\Bigg(\frac{\beta \sqrt{2\pi k_n^{1+(d/m)}}}{n \gamma (f_{\max}/(1-C_0))^{1+d/(2m)}}\Bigg)^{1/k_n}\Bigg)
\end{equation*}
as in the uniform case.
The rest of the proof is similar to that of Theorem~\ref{Theorem.01}; hence, we outline only the necessary modifications to Lemma~\ref{Lemma.02}.
We split the integral $I_2(n)$ into two parts:
\begin{equation*}
    \begin{aligned}
        I_2(n) &= \lambda(n)^2 \int_{B(\bz; \rho_1+4r_n)} F(dx) \int_{B(x; 3r_n)} F(dy) \Prob[\{\CP_{\lambda(n)}^y(B_x^n) \in A_n\} \cap \{\CP_{\lambda(n)}^x(B_y^n) \in A_n\}]\\
        &\quad + \lambda(n)^2 \int_{\Rd \setminus B(\bz; \rho_1+4r_n)} F(dx) \int_{B(x; 3r_n)} F(dy) \Prob[\{\CP_{\lambda(n)}^y(B_x^n) \in A_n\} \cap \{\CP_{\lambda(n)}^x(B_y^n) \in A_n\}].
    \end{aligned}
\end{equation*}
Note that by the uniform continuity of $f$ on a neighborhood of $\bz$, for any $\varepsilon_1 > 0$, we can choose $\rho_0$ such that $f(x) \geq (1-\varepsilon_1)f_{\max}$ on $B(\bz; \rho_0)$.
Then, the first term is bounded above by
\begin{equation*}
    \lambda(n)^2 r_n^{2d} \int_{B(\bz; \rho_1 r_n^{-1}+4)} \int_{B(x; 3)} h_n(x,y) f_{\max}^2 \,dydx,
\end{equation*}
where we define $h_n(x,y) := \Prob[\{\CP_{\lambda(n)r_n^d}^y(B(x; 1)) \in A_n\} \cap \{\CP_{\lambda(n)r_n^d}^x(B(y; 1)) \in A_n\}]$ and let $\CP_{\lambda(n)r_n^d}$ be a Poisson point process with intensity $\lambda(n)r_n^d f(r_n x)$.
We can estimate the integrand by
\begin{equation*}
    \begin{aligned}
        &c \Prob[\CP_{\lambda(n)r_n^d}(B_x)+1 \in A_n] \times (\Prob[\mathrm{Bi}(k_n+J_2-1, (1-\varepsilon_1) F_{y-x}) \leq k_n F_{y-x} (1+C_0)/2]\\
        &\hspace{12em} + \Prob[\mathrm{Po}(\lambda(n)\theta r_n^d f_{\max} F_{y-x}) \geq k_n F_{y-x} (1+C_0)/2 - J_2+J_1]).
    \end{aligned}
\end{equation*}
If $\alpha > 0$ is sufficiently small and $J_2-J_1 \leq \alpha k_n F_{y-x}$, then this is further bounded by
\begin{equation*}
    c\exp(-c_1 k_n F_{y-x}) \Prob[\CP_{\lambda(n)r_n^d}(B_x)+1 \in A_n].
\end{equation*}
Consequently, the integral is at most $ck_n^{1-d}$, which vanishes as $n \to \infty$.
Since $f(x) \leq (1-\varepsilon_0) f_{\max}$ on $B(\bz; \rho_1)$, the second term of $I_2(n)$ is bounded by
\begin{equation*}
    c \lambda(n)^2 r_n^d \frac{(\lambda(n)r_n^d f_{\max})^{k_n}}{k_n!} e^{-\lambda(n)\theta r_n^d f_{\max}} (1-\varepsilon_0/2)^{k_n} \leq c k_n^{1+d/(2m)} \exp(-c_1 k_n)
\end{equation*}
and this vanishes as $n \to \infty$.
On the other hand, we have
\begin{equation*}
    I_1(n) \leq c \bigg(\lambda(n) \frac{(\lambda(n)\theta r_n^d f_{\max})^{k_n}}{k_n!} e^{-\lambda(n)\theta r_n^d f_{\max}}\bigg)^2 r_n^d \leq c k_n^{2+(d/m)} r_n^d \to 0,
\end{equation*}
since $r_n$ decays faster than any power of $k_n^{-1}$.
These are all the modifications needed to complete the proof.

\subsection{Point process}
First, we consider the case where the density is uniform.
To show the convergence of the point processes, we utilize the convergence criterion given by \cite[Theorem~4.15]{MR3642325}.

Let $S$ be a locally compact, second-countable Hausdorff space, and let $\CS$ be its Borel $\sigma$-field.
We write $\hat{\CS}$ for the family of relatively compact Borel sets of $S$.
A collection of Borel sets $\CK \subset \hat{\CS}$ is said to be dissecting if the following conditions hold:
\begin{itemize}
    \item Each open set $G \subset S$ is a countable union of sets in $\CK$.
    \item Each $B \in \hat{\CS}$ is covered by finitely many sets in $\CK$. 
\end{itemize}
For a point process $\xi$ on $S$, we write $\hat{\CS}_{\xi} := \{B \in \hat{\CS} : \xi(\partial B) = 0,~ \text{a.s.}\}$.
\begin{lemma}[{\cite[Theorem~4.15]{MR3642325}}]\label{Lemma.03}
    Let $\{\xi_n\}_{n \in \N}$ be a sequence of point processes on $S$, and let $\xi$ be a simple point process on $S$.
    Fix a dissecting ring $\CU \subset \hat{\CS}_{\xi}$ and a dissecting semi-ring $\CI \subset \CU$.
    Then, $\xi_n \xrightarrow{d} \xi$ if and only if the following conditions hold:
    \begin{enumerate}
        \item[(1)] For all $U \in \CU$, $\Prob[\xi_n(U) = 0] \to \Prob[\xi(U) = 0]$ as $n \to \infty$.
        \item[(2)] For all $I \in \CI$, $\limsup_{n \to \infty} \Prob[\xi_n(I) > 1] \leq \Prob[\xi(I) > 1]$.
    \end{enumerate}
\end{lemma}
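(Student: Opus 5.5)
This criterion is quoted from Kallenberg, so within the paper we simply invoke it. If I had to reprove it, I would split it into the two implications and reduce the hard one to three standard ingredients: vague-topology compactness, simplicity of subsequential limits, and R\'enyi's theorem that the avoidance function $U \mapsto \Prob[\xi(U)=0]$ on a dissecting ring determines the law of a simple point process.

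\emph{Necessity.} Assume $\xi_n \xrightarrow{d} \xi$. For $U \in \CU \subset \hat{\CS}_{\xi}$ we have $\xi(\partial U)=0$ a.s., so $\mu \mapsto \mu(U)$ is a.s.\ continuous at $\xi$ in the vague topology. The continuous mapping theorem then gives $\xi_n(U) \xrightarrow{d} \xi(U)$ as integer-valued variables, which yields both (1) and, since $\CI \subset \CU$, condition (2) with equality in the limit.

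\emph{Sufficiency, step 1: tightness.} First I show $\{\xi_n\}$ is tight in $\BN(S)$. By the dissecting property, every compact set is covered by finitely many $U \in \CU$, so it suffices that $\xi_n(U)$ is tight for each $U \in \CU$. Here I expect the main obstacle: (1) only controls $\Prob[\xi_n(U)=0]$, not the size of $\xi_n(U)$. My plan is to partition $U$ into a fine dissection $I_1,\dots,I_k \in \CI$ and bound $\xi_n(U) > k$ by the event that some $\xi_n(I_j) > 1$. Condition (2) makes the $\limsup$ of this probability at most $\sum_j \Prob[\xi(I_j)>1]$. Because $\xi$ is simple and locally finite, this sum tends to $0$ as the dissection is refined. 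This gives tightness of $\xi_n(U)$.

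\emph{Sufficiency, step 2: identifying the limit.} Take any subsequential limit $\eta$ of $(\xi_n)$. The boundary-null property needs care, since $\hat{\CS}_{\eta}$ might differ from $\hat{\CS}_{\xi}$. I would pass to inner and outer approximations of sets in $\CU$ with $\eta$-null boundaries, use the portmanteau inequalities together with (1), and squeeze to get $\Prob[\eta(U)=0] = \Prob[\xi(U)=0]$ for $U \in \CU$. The same dissection argument, using (2) and Fatou-type semicontinuity, shows $\eta$ is a.s.\ simple: the expected number of dissection cells carrying at least two points vanishes in the limit. R\'enyi's theorem then gives $\eta \overset{d}{=} \xi$. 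Since every subsequence has a further subsequence converging to $\xi$, the whole sequence converges to $\xi$.
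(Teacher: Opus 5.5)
There is a genuine gap in your Step 1. Your necessity argument is fine, and the overall plan (tightness, simplicity of subsequential limits, R\'enyi's theorem) is the standard architecture; the paper itself gives no proof and only cites Kallenberg. The gap is the claim that $\sum_j \Prob[\xi(I_j)>1]\to 0$ as the dissection is refined. This sum equals $\E[\#\{j:\xi(I_j)\ge 2\}]$. Simplicity only gives that this count tends to $0$ almost surely, and passing to expectations needs a dominating bound such as $\E[\xi(U)]<\infty$, which the lemma does not assume.

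The claim is in fact false in general. On $U=(0,1]$, let $\xi$ consist of $N$ i.i.d.\ uniform points with $\Prob[N\ge k]\asymp 1/k$. For any partition into intervals of lengths $\ell_j\le 1/2$ one has $\Prob[\xi(I_j)\ge 2]\ge c\,\Prob[N\ge 2/\ell_j]\ge c'\ell_j$, so the sum stays above $c'>0$ under every refinement. The same unjustified step reappears in Step 2, where you say the expected number of cells carrying two points of $\eta$ vanishes: that expectation is again bounded only by this non-vanishing sum. For this paper's application the limit $\CQ$ is Poisson with $\Lambda(U)<\infty$, so $\E[\CQ(U)]<\infty$ and your argument does work there. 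It just does not prove the lemma as stated.

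The missing idea is to avoid summing the limit probabilities $\Prob[\xi(I)>1]$ over a fine partition. Instead, apply (2) to coarse cells and (1) at a finer level.

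\emph{Tightness.} Take nested $\CI$-partitions of $U$ at levels $m<m'$. Let $N_m$ be the number of level-$m$ cells, and let $p_m$ be the probability that some level-$m$ cell contains at least two points of $\xi$; then $p_m\to 0$ by simplicity and bounded convergence. By (1) and inclusion--exclusion over the ring $\CU$, the joint law of the occupancy indicators of finitely many cells converges. For a level-$m$ cell $I$, split $\{\xi_n(I)\ge 2\}$ into two disjoint events: ``at least two level-$m'$ subcells of $I$ are occupied'', and $E_n(I)$, ``exactly one subcell is occupied and $\xi_n(I)\ge 2$''. The first probability converges, so (2) applied to $I$ gives $\limsup_n\Prob[E_n(I)]\le p_{m'}$. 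Any level-$m'$ cell with two points of $\xi_n$ lies in a level-$m$ cell that either has two occupied subcells or realizes $E_n(I)$. Hence $\limsup_n\Prob[\text{some level-}m'\text{ cell has }\xi_n\ge 2]\le p_m+N_m p_{m'}$; let $m'\to\infty$ and then $m\to\infty$. Off this event, $\xi_n(U)$ equals the number of occupied level-$m'$ cells, whose law converges. This gives $\limsup_n\Prob[\xi_n(U)>k]\le\Prob[\xi(U)>k]$, which is tightness.

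\emph{Simplicity of a subsequential limit $\eta$.} First apply R\'enyi to the simple process $\mathrm{supp}\,\eta$, whose avoidance function is that of $\eta$, which you identified; this gives $\mathrm{supp}\,\eta\stackrel{d}{=}\xi$. Portmanteau then yields $\Prob[\eta(I^\circ)>1]\le\Prob[\xi(I)>1]=\Prob[\xi(I^\circ)>1]$. This forces $\Prob[\mathrm{supp}\,\eta\text{ has one point in }I^\circ\text{ and }\eta(I^\circ)>1]=0$ for each of countably many cells, and so $\eta$ is simple.
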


To prove Theorem~\ref{Theorem.03}, we need to construct a dissecting ring and a dissecting semi-ring on $\Rd \times \Z$.
First, as a dissecting semi-ring, we define
\begin{equation*}
    \CI := \{(x,y]_{\Rd} \times [s,t]_{\Z} : x,y \in \Rd ~\text{with}~ x<y, ~\text{and}~ s,t \in \Z ~\text{with}~ s \leq t\}.
\end{equation*}
Here, the relation $x<y$ means that all coordinates of $y - x$ are positive, and we define $(x,y]_{\Rd} := \prod_{i=1}^d (x_i,y_i]_{\R}$ and $[s,t]_{\Z} := \{\ell \in \Z : s \leq \ell \leq t\}$.
Note that each element in $\CI$ is a continuity set of $\CQ$, and consequently $\CI \subset \hat{S}_{\CQ}$ for $S := \Rd \times \Z$.
One can easily check that $\CI$ indeed satisfies the conditions of being dissecting.
Since $\CI$ is a dissecting semi-ring, the collection $\CU$ of all finite disjoint unions of elements in $\CI$ forms a dissecting ring.
We use these collections, which are contained in $\hat{S}_{\CQ}$.

\begin{lemma}[{\cite[Theorem~2.3]{MR1986198}}]\label{Lemma.05}
    Let $(\CJ, \sim)$ be a finite simple graph and let $(\xi_j)_{j \in \CJ}$ be a collection of Bernoulli random variables, such that $\xi_j$ is independent of $\xi_i$ whenever $j \nsim i$.
    That is, assume that $(\CJ, \sim)$ is a dependency graph for $(\xi_j)_{j \in \CJ}$.
    Let $\{\CJ(i)\}_{i=1}^{\ell}$ be a partition of $\CJ$ and for $1 \leq i \leq \ell$, set $W_i := \sum_{j \in \CJ(i)} \xi_j$.
    Let $(Z_i)_{i=1}^{\ell}$ be a collection of independent Poisson variables with $\E[Z_i] = \E[W_i]$.
    Then, for $p_j := \E[\xi_j]$ and $p_{ji} := \E[\xi_j \xi_i]$, it holds that
    \begin{equation*}
        d_{\mathrm{TV}}\Big((W_i)_{i=1}^{\ell}, (Z_i)_{i=1}^{\ell}\Big) \leq 3\Bigg(\sum_{j \in \CJ} \sum_{i \in \CN_j \setminus \{j\}}p_{ji} + \sum_{j \in \CJ} \sum_{i \in \CN_j} p_j p_i\Bigg),
    \end{equation*}
    where $\CN_j := \{j\} \cup \{i \in \CJ : i \sim j\}$ is the neighborhood of $j$ with respect to the adjacency relation.
\end{lemma}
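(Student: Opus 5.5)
We would prove this by Stein's method for multivariate Poisson approximation (the Stein--Chen method in its immigration--death form). Alternatively, we could deduce it from the process-level result of Arratia--Goldstein--Gordon: compare $(\xi_j)_{j\in\CJ}$ with independent Poissons $(\eta_j)_{j \in \CJ}$, $\E[\eta_j]=p_j$. Since $(\xi_j)\mapsto (\sum_{j\in\CJ(i)}\xi_j)_{i}$ is measurable, total variation can only decrease under this map. Moreover, $\sum_{j\in\CJ(i)}\eta_j$ are independent Poissons with means $\E[W_i]$, so they have the law of $(Z_i)$. We describe the direct Stein argument on $\Nnn^{\ell}$, which gives the stated constant.

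\emph{Step 1: Stein equation.} Write $\lambda_i := \E[W_i]$ and $e_i$ for the $i$-th unit vector. For $A\subset \Nnn^{\ell}$, we solve
\begin{equation*}
\sum_{i=1}^{\ell}\lambda_i\bigl(g(w+e_i)-g(w)\bigr)+\sum_{i=1}^{\ell}w_i\bigl(g(w-e_i)-g(w)\bigr)=\mathbf{1}_A(w)-\Prob[(Z_i)\in A].
\end{equation*}
The left side is the generator of an immigration--death process on $\Nnn^{\ell}$ with immigration rates $\lambda_i$ and unit per-capita death rates. Its stationary law is $\bigotimes_i \mathrm{Po}(\lambda_i)$, and the solution is $g(w)=-\int_0^\infty(\Prob_w[Y_t\in A]-\Prob[(Z_i)\in A])\,dt$.

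\emph{Step 2: bound the first differences of $g$.} We expect this to be the main obstacle, since the bound must be uniform in $\ell$ and in the $\lambda_i$. We would couple the processes started from $w$ and $w+e_i$ so that they differ only by one extra particle with an $\mathrm{Exp}(1)$ lifetime. This gives
\begin{equation*}
|g(w+e_i)-g(w)|\le \int_0^\infty e^{-t}\,dt=1
\end{equation*}
for all $w$ and $i$. In contrast to the univariate case, we do not try to obtain the improved $\lambda^{-1}$ Stein factors. The crude bound suffices here.

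\emph{Step 3: local decomposition.} For $j\in\CJ$ let $i(j)$ be the block containing $j$, and write $\Delta_{i}g(w):=g(w+e_{i})-g(w)$. Set $U_j:=\sum_{k\in\CN_j}\xi_k e_{i(k)}$ and $V_j:=W-U_j$, where $W=(W_i)_i$; by the dependency-graph assumption $V_j$ is independent of $\xi_j$. Since $\lambda_i=\sum_{j\in\CJ(i)}p_j$ and $w_i=\sum_{j\in\CJ(i)}\xi_j$, we get
\begin{equation*}
\E[\text{LHS}(W)]=\sum_{j\in\CJ}\Bigl(p_j\,\E[\Delta_{i(j)}g(W)]-\E\bigl[\xi_j\,\Delta_{i(j)}g(W-e_{i(j)})\bigr]\Bigr).
\end{equation*}
On $\{\xi_j=1\}$ we have $W-e_{i(j)}=V_j+(U_j-e_{i(j)})$. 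By independence, $\E[\xi_j\Delta_{i(j)}g(V_j)]=p_j\E[\Delta_{i(j)}g(V_j)]$. So each summand equals
\begin{equation*}
p_j\,\E\bigl[\Delta_{i(j)}g(W)-\Delta_{i(j)}g(V_j)\bigr]-\E\bigl[\xi_j\bigl(\Delta_{i(j)}g(V_j+U_j-e_{i(j)})-\Delta_{i(j)}g(V_j)\bigr)\bigr].
\end{equation*}

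\emph{Step 4: telescoping.} We telescope each difference of $\Delta g$ along single unit steps. Moving from $V_j$ to $V_j+U_j$ takes $\sum_{k\in\CN_j}\xi_k$ steps. Each step changes $\Delta g$ by at most twice the first-difference bound; a finer count using that $\Delta g$ is itself a difference of $g$ values keeps the constant at most $3$.

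\emph{Step 5: conclusion.} The first term is at most a constant times $p_j\sum_{k\in\CN_j}p_k$. In the second term, $\xi_j(U_j-e_{i(j)})$ involves only $\xi_j\xi_k$ with $k\in\CN_j\setminus\{j\}$, so it is at most a constant times $\sum_{k\in\CN_j\setminus\{j\}}p_{jk}$. Summing over $j$ and taking the supremum over $A$ yields the stated bound.
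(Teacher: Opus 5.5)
The paper gives no proof of this lemma; it is quoted from Penrose. The standard derivation behind that citation is the Arratia--Goldstein--Gordon process-level Poisson bound, followed by projection onto block sums. That is exactly the alternative you mention in your opening paragraph.

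Your main argument takes a genuinely different, self-contained route: Barbour's immigration--death Stein method run directly on $\Nnn^{\ell}$. It is correct.

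\emph{Steps 1--3.} These are right as written. In Step 3, the independence of $\xi_j$ and $V_j$ uses the set-wise meaning of a dependency graph: no edges between $\{j\}$ and $\CJ\setminus\CN_j$ implies that $\xi_j$ is independent of the whole family $(\xi_k)_{k\notin\CN_j}$. This is what the hypothesis intends. The pairwise independence literally written in the statement would not suffice, so state the set-wise property explicitly.

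\emph{Step 4.} This is vaguer than it needs to be. Since $\Delta_i g(w+e_k)-\Delta_i g(w)$ is a difference of two first differences, each unit step costs at most $2\sup|\Delta g|\le 2$. The telescoping then gives directly
$|\Prob[(W_i)_i\in A]-\Prob[(Z_i)_i\in A]|\le 2(b_1+b_2)$,
where $b_1:=\sum_{j}\sum_{k\in\CN_j}p_jp_k$ and $b_2:=\sum_j\sum_{k\in\CN_j\setminus\{j\}}p_{jk}$. No ``finer count'' is needed to get below $3$. In fact you can do better: couple two extra particles with independent $\mathrm{Exp}(1)$ lifetimes. The second-difference integrand then vanishes unless both particles survive, so $|\Delta_k\Delta_i g|\le\int_0^\infty 2e^{-2t}\,dt=1$, which gives constant $1$.

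\emph{Comparison.} Your route is elementary, avoids the detour through the full indicator field, and yields a better constant for exactly the block counts the paper needs later. The process-level route proves a stronger statement: the whole vector $(\xi_j)_{j\in\CJ}$ is close to independent $\mathrm{Po}(p_j)$ variables. It also extends, via an additional long-range dependence term, to settings without an exact dependency graph. The present lemma then follows because total variation cannot increase under the measurable block-sum map.
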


\begin{lemma}\label{Lemma.04}
    Assume that $f$ is bounded.
    Let $r > 0$, $\lambda > 0$, and for $\ell \in \N$, let $\{A_j\}_{j=1}^{\ell}$ be mutually disjoint subsets of $\Z$.
    Assume also that $\{D_j\}_{j=1}^{\ell} \subset \hat{\CS}$ and let $W_{A_j, \lambda, r}'(D_j)$ denote the number of vertices lying in $D_j$, whose degrees are in $A_j$ within $G(\CP_{\lambda}; r)$.
    Then, for independent Poisson random variables $(Z_j)_{j=1}^{\ell}$ with $\E[Z_j] = \E[W_{A_j, \lambda, r}'(D_j)]$, it holds that
    \begin{equation*}
        d_{\mathrm{TV}}\Big((W_{A_j, \lambda, r}'(D_j))_{j=1}^{\ell}, (Z_j)_{j=1}^{\ell}\Big) \leq 3(I_1+I_2),
    \end{equation*}
    where we define
    \begin{equation*}
        \begin{aligned}
            I_1 &:= \lambda^2 \sum_{j_1,j_2=1}^{\ell} \int_{D_{j_1}} F(dx) \int_{D_{j_2} \cap B(x; 3r)} F(dy) \Prob[\CP_{\lambda}(B(x; r)) \in A_{j_1}] \Prob[\CP_{\lambda}(B(y; r)) \in A_{j_2}],\\
            I_2 &:= \lambda^2 \sum_{j_1,j_2=1}^{\ell} \int_{D_{j_1}} F(dx) \int_{D_{j_2} \cap B(x; 3r)} F(dy) \Prob[\{\CP_{\lambda}^y(B(x; r)) \in A_{j_1}\} \cap \{\CP_{\lambda}^x(B(y; r)) \in A_{j_2}\}].
        \end{aligned}
    \end{equation*}
\end{lemma}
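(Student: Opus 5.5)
The plan is to deduce Lemma~\ref{Lemma.04} from the dependency-graph bound Lemma~\ref{Lemma.05} by discretization. This mirrors the way \cite[Theorem~6.7]{MR1986198} (our Lemma~\ref{Lemma.01}) is obtained from \cite[Theorem~2.3]{MR1986198}.

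\textbf{Discretization.} Fix $\varepsilon>0$ and partition $\Rd$ into half-open cubes $(Q_{\varepsilon,k})_{k\in\Z^d}$ of side $\varepsilon$ with centres $z_k$. Only finitely many cubes meet $\bigcup_j D_j$, since each $D_j$ is relatively compact. For each $j\in\{1,\dots,\ell\}$ and each cube $k$ meeting $D_j$, define the Bernoulli variable
\begin{equation*}
    \xi_{(j,k)} := \mathbf{1}\{\CP_{\lambda}(Q_{\varepsilon,k}\cap D_j)=1,\ \CP_{\lambda}(Q_{\varepsilon,k})=1,\ \CP_{\lambda}(B(z_k;r)) \in A_j+1\}.
\end{equation*}
The shift $A_j+1$ accounts for the point itself, which is counted in the ball. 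Put $\CJ(j):=\{(j,k)\}_k$ and $W_j^{\varepsilon}:=\sum_k \xi_{(j,k)}$.

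\textbf{Dependency graph.} Declare $(j_1,k_1)\sim(j_2,k_2)$ when $\|z_{k_1}-z_{k_2}\|\le 3r$ (say) and the pair is distinct. Since $\CP_\lambda$ has independent increments and $\xi_{(j,k)}$ depends only on $\CP_\lambda$ restricted to $Q_{\varepsilon,k}\cup B(z_k;r)$, this is a valid dependency graph once $\varepsilon$ is small relative to $r$. The cubes must be disjoint from each other's balls, which holds when the centres are more than $2r+2\sqrt d\,\varepsilon$ apart, and we can use $3r$ as the threshold. A distinctive point is that $(j_1,k)$ and $(j_2,k)$ for $j_1\ne j_2$ lie in the same cube and are neighbours. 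However, $\xi_{(j_1,k)}\xi_{(j_2,k)}=0$ because the $A_j$ are disjoint, or because the same single point cannot lie in disjoint $D_j$'s. Strictly, the $D_j$ need not be disjoint, but a point in $D_{j_1}\cap D_{j_2}$ with degree in $A_{j_1}$ cannot also have degree in $A_{j_2}$. So these pairs contribute nothing to $p_{ji}$.

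\textbf{Estimates.} Apply Lemma~\ref{Lemma.05} to get
\begin{equation*}
    d_{\mathrm{TV}}\big((W_j^\varepsilon)_j,(Z_j^\varepsilon)_j\big)\le 3\Big(\sum\sum p_{(j_1,k_1)(j_2,k_2)}+\sum\sum p_{(j_1,k_1)}p_{(j_2,k_2)}\Big).
\end{equation*}
We then let $\varepsilon\to0$. By the Lebesgue density theorem (as in the remark after Lemma~\ref{Lemma.01}), $\varepsilon^{-d}p_{(j,k)}$ approximates $\lambda f(x)\Prob[\CP_\lambda(B(x;r))\in A_j]\mathbf{1}_{D_j}(x)$ in $L^1$. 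Hence the second sum converges to an expression bounded by $I_1$, the range $3r$ giving the domain $B(x;3r)$. For the first sum, with distinct cubes $k_1\neq k_2$, $\varepsilon^{-2d}p_{(j_1,k_1)(j_2,k_2)}$ converges to $\lambda^2 f(x)f(y)\Prob[\CP^y_\lambda(B(x;r))\in A_{j_1},\CP^x_\lambda(B(y;r))\in A_{j_2}]$, integrated over $x\in D_{j_1}$ and $y\in D_{j_2}\cap B(x;3r)$, which gives $I_2$.

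\textbf{Passage to the limit.} Finally, $P[(W_j^\varepsilon)_j\ne(W'_{A_j,\lambda,r}(D_j))_j]\to0$. This holds because, almost surely, for small $\varepsilon$ every cube contains at most one point, and replacing $B(X;r)$ by $B(z_k;r)$ changes the count only if some point lies within $\sqrt d\varepsilon$ of the sphere $\partial B(X;r)$, which has vanishing probability. Also $\E Z_j^\varepsilon\to\E W'_{A_j,\lambda,r}(D_j)$, so the Poisson laws converge in total variation. The triangle inequality then gives the claim.

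\textbf{Main obstacle.} The delicate step is the $\varepsilon\to0$ limit of the sums for a merely bounded, not a.e.\ continuous, $f$, together with the boundary effects near $\partial D_j$. I would handle this with dominated convergence, the densities being bounded by $\lambda f_{\max}$ on compact sets, combined with Lebesgue differentiation of $F$ and continuity of $x\mapsto\Prob[\CP_\lambda(B(x;r))\in A]$, which holds since spheres are $F$-null.
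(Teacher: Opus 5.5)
Your proposal is correct and takes essentially the same route as the paper: discretize into small cubes, apply Lemma~\ref{Lemma.05} with adjacency given by cube centres at distance at most $3r$, and let the mesh go to zero using the Lebesgue density theorem and bounded convergence. Your version is slightly more careful in two places. First, you declare same-cube pairs $(j_1,k),(j_2,k)$ to be neighbours; they are genuinely dependent, yet the paper's relation $0<\|a_{m,i}-a_{m,i'}\|\le 3r$ leaves them out, and you correctly note that disjointness of the $A_j$ makes their joint probabilities vanish. Second, you spell out the closing triangle-inequality step that the paper leaves implicit.
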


\begin{proof}
    We proceed by discretizing the setting to construct a dependency graph.
    After applying Lemma~\ref{Lemma.05}, we transfer the obtained upper bound back to the original continuous setting.

    Let $m \in \N$, and partition $\Rd$ into cubes of side length $m^{-1}$.
    Retain those that can cover $\bigcup_{j=1}^{\ell} D_j$, denoted by $\{H_{m,i}\}_{i=1}^q$, and let $\{a_{m,i}\}_{i=1}^q$ be the centers of these cubes.
    Now, we define
    \begin{equation*}
        \xi_{(i,j)}^m := \mathbf{1}\{\CP_{\lambda}(D_j \cap H_{m,i}) = 1\} \cap \{\CP_{\lambda}(B(a_{m,i}; r) \setminus H_{m,i}) \in A_j\}.
    \end{equation*}
    We define the adjacency relation on the indices $(i,j) \in \{1, 2, \dots, \ell\} \times \{1, 2, \dots, q\} =: \CJ_m$ by
    \begin{equation*}
        (i,j) \sim_m (i',j') \iff 0 < \|a_{m,i} - a_{m,i'}\| \leq 3r.
    \end{equation*}
    Then, $(\CJ_m, \sim_m)$ forms a dependency graph for $\{\xi_{(i,j)}^m : (i,j) \in \CJ_m\}$.
    Therefore, by Lemma~\ref{Lemma.05}, if we let $W_j^m := \sum_{i=1}^q \xi_{(i,j)}^m$ and let $(Z_j^m)_{j=1}^{\ell}$ be independent Poisson random variables such that $\mathbb{E}[Z_j^m] = \mathbb{E}[W_j^m]$, we then have
    \begin{equation*}
        \begin{aligned}
            &d_{\mathrm{TV}}\Big((W_j^m)_{j=1}^{\ell}, (Z_j^m)_{j=1}^{\ell}\Big)\\
            &\leq 3\Bigg(\sum_{(i,j) \in \CJ_m} \sum_{(i',j') \in \CN_{(i,j)}^m \setminus \{(i,j)\}} p_{(i,j),(i',j')}^m + \sum_{(i,j) \in \CJ_m} \sum_{(i',j') \in \CN_{(i,j)}^m} p_{(i,j)}^m p_{(i',j')}^m\Bigg),
        \end{aligned}
    \end{equation*}
    where $p_{(i,j)}^m := \E[\xi_{(i,j)}^m]$, $p_{(i,j),(i',j')}^m := \E[\xi_{(i,j)}^m \xi_{(i',j')}^m]$, and
    \begin{equation*}
        \CN_{(i,j)}^m := \{(i',j') \in \CJ_m : (i',j') \sim_m (i,j)\} \cup \{(i,j)\}.
    \end{equation*}
    Note that $\lim_{m \to \infty} W_j^m = W_{A_j, \lambda, r}'(D_j)$ holds almost surely.

    Now, defining
    \begin{equation*}
        w_j^m(x) := \begin{cases}
            m^d p_{(i,j)}^m & (x \in H_{m,i}), \\
            0 & (x \notin \bigcup_{i=1}^q H_{m,i}),
        \end{cases}
    \end{equation*}
    we obtain $\int_{\Rd} w_j^m(x) dx = \E[W_j^m]$.
    By the Lebesgue density theorem, it holds that
    \begin{equation*}
        \begin{aligned}
            w_j^m(x) &= \lambda m^d \int_{H_{m,i}} f(y) \mathbf{1}_{D_j}(y) \,dy \times e^{O(m^{-d})} \Prob[\CP_{\lambda}(B(a_{m,i}; r)) \in A_j](1+o(1))\\
            &\xrightarrow{m \to \infty} \lambda f(x) \mathbf{1}_{D_j}(x) \Prob[\CP_{\lambda}(B(x; r)) \in A_j], \quad \text{a.e.}
        \end{aligned}
    \end{equation*}
    Since $w_j^m$ vanishes outside of some compact set and the inequality
    \begin{equation*}
        0 \leq w_j^m(x) \leq m^d \Prob[\CP_{\lambda}(D_j \cap H_{m,i}) = 1] \leq \lambda f_{\max}
    \end{equation*}
    holds, the bounded convergence theorem yields
    \begin{equation*}
        \E[W_j^m] = \int_{\Rd} w_j^m(x) \,dx \xrightarrow{m \to \infty} \lambda \int_{D_j} \Prob[\CP_{\lambda}(B(x; r)) \in A_j] f(x) \,dx = \E[W_{A_j, \lambda, r}'(D_j)],
    \end{equation*}
    where the last equality follows from the Palm theory for Poisson processes (see Appendix).

    By a similar argument, for the functions defined by
    \begin{equation*}
        \begin{aligned}
            &u_{j,j'}^m(x,y) := m^{2d} p_{(i,j)}^m p_{(i',j')}^m \mathbf{1}\{(i',j') \in \CN_{(i,j)}^m\},\\
            &v_{j,j'}^m(x,y) := m^{2d} p_{(i,j),(i',j')}^m \mathbf{1}\{(i',j') \in \CN_{(i,j)}^m \setminus \{(i,j)\}\},
        \end{aligned}
    \end{equation*}
    with $x \in H_{m,i}$ and $y \in H_{m,i'}$, the assumption that $\{A_j\}_{j=1}^{\ell}$ are mutually disjoint implies that
    \begin{equation*}
        \begin{aligned}
            u_{j,j'}^m(x,y) &\xrightarrow{m \to \infty} \lambda^2 f(x) \mathbf{1}_{D_j}(x) f(y) \mathbf{1}_{D_{j'}}(y) \Prob[\CP_{\lambda}(B(x; r)) \in A_j] \Prob[\CP_{\lambda}(B(y; r)) \in A_{j'}] \mathbf{1}_{B(x; 3r)},\\
            v_{j,j'}^m(x,y) &\xrightarrow{m \to \infty} \lambda^2 f(x) \mathbf{1}_{D_j}(x) f(y) \mathbf{1}_{D_{j'}}(y) \Prob[\{\CP_{\lambda}^y(B(x; r)) \in A_j\} \cap \{\CP_{\lambda}^x(B(y; r)) \in A_{j'}\}] \mathbf{1}_{B(x; 3r)}
        \end{aligned}
    \end{equation*}
    hold almost surely.
    This further yields
    \begin{equation*}
        \sum_{(i,j) \in \CJ_m} \sum_{(i',j') \in \CN_{(i,j)}^m \setminus \{(i,j)\}} p_{(i,j),(i',j')}^m \xrightarrow{m \to \infty} I_2, \quad \sum_{(i,j) \in \CJ_m} \sum_{(i',j') \in \CN_{(i,j)}^m} p_{(i,j)}^m p_{(i',j')}^m \xrightarrow{m \to \infty} I_1,
    \end{equation*}
    which completes the proof.
\end{proof}

\begin{proof}[Proof of Theorem~\ref{Theorem.03}]
    By Lemma~\ref{Lemma.03}, it suffices to show that $\Phi_n(U) \xrightarrow{d} \CQ(U) = \mathrm{Po}(\Lambda(U))$ for any $U \in \CU$.
    For a given $U$, let $\pi(U) := \{j \in \Z : (\Rd \times \{j\}) \cap U \neq \emptyset\}$ denote its natural projection onto $\Z$.
    We classify the constituent sets whose union reconstructs $U$ according to the integers representing degrees.
    That is, for each $j \in \pi(U)$, we define
    \begin{equation*}
        D(j) := \bigcup \{(x, y]_{\Rd} : (x, y]_{\Rd} \times \{j\} \subset U\},
    \end{equation*}
    so that $U$ can be written as the finite union $U = \bigsqcup_{j \in \pi(U)} D(j) \times \{j\}$.

    In what follows, we prove that the Poisson approximation holds. We regard the finite set $\pi(U)$ as an ordered set by inheriting the natural order of $\Z$.
    By Lemma~\ref{Lemma.04}, for independent Poisson random variables $(Z(j))_{j \in \pi(U)}$ satisfying $\E[Z(j)] = \E[W_{k_n+j, n}'(D(j))]$, we obtain the total variation bound
    \begin{equation*}
        d_{\mathrm{TV}}\Big((W_{k_n+j, n}'(D(j)))_{j \in \pi(U)}, \, (Z(j))_{j \in \pi(U)}\Big) \leq 3(I_1(n) + I_2(n)),
    \end{equation*}
    where $I_1(n)$ and $I_2(n)$ are defined by
    \begin{align*}
        I_1(n) &:= \lambda(n)^2 \sum_{j,\ell \in \pi(U)} \int_{D(j)} F(dx) \int_{D(\ell) \cap B(x; 3r_n)} F(dy)\\
        &\hspace{14em} \times \Prob[\CP_{\lambda(n)}(B_x^n) = k_n + j] \Prob[\CP_{\lambda(n)}(B_y^n) = k_n + \ell],\\
        I_2(n) &:= \lambda(n)^2 \sum_{j,\ell \in \pi(U)} \int_{D(j)} F(dx) \int_{D(\ell) \cap B(x; 3r_n)} F(dy)\\
        &\hspace{14em} \times \Prob[\{\CP_{\lambda(n)}^y(B_x^n) = k_n + j\} \cap \{\CP_{\lambda(n)}^x(B_y^n) = k_n + \ell\}].
    \end{align*}
    Note that these terms are bounded by $|\pi(U)|^2$ times the upper bounds appearing in Lemma~\ref{Lemma.02}, which were shown to vanish.
    Consequently, the Poisson approximation holds. 

    To complete the argument, it is necessary to de-Poissonize this result; however, this step can be carried out in the almost same manner as in the proof of Proposition~\ref{Proposition.01}.
    Through this procedure, we find that the joint distribution of $(W_{k_n+j, n}(D(j)))_{j \in \pi(U)}$ converges weakly to that of $(Z(j))_{j \in \pi(U)}$.
    Since the random variables $(Z(j))_{j \in \pi(U)}$ are independent, their sum is also Poisson distributed, yielding
    \begin{equation*}
        \Phi_n(U) = \sum_{j \in \pi(U)} W_{k_n+j, n}(D(j)) \xrightarrow{d} \sum_{j \in \pi(U)} Z(j) = \mathrm{Po}(\Lambda(U)).
    \end{equation*}
    This completes the proof for the uniform case.

    The proof for the non-uniform case proceeds in almost the same manner.
    Since only the calculation of the expectation of $W'$ differs, the interested reader is referred to the proof of Theorem~2.11 in \cite{otsuka2026limit}.
\end{proof}

\section{Proofs for the super-connectivity regime}
\subsection{Threshold radius}
As in the connectivity regime, we begin with the uniform case.
Based on the almost sure limit result in Penrose~\cite[Theorem~6.11]{MR1986198}, we search for $(r_n)_{n \in \N}$ that satisfies the following conditions:
\begin{equation}\label{equation.03}
    \frac{n\theta r_n^d}{k_n} = 1-\varepsilon_n, \quad (0,\infty) \ni \varepsilon_n \to 0 \quad \text{and} \quad \E\bigg[\sum_{j=k_n}^\infty W_{j,n}'\bigg] \to \beta \in (0,\infty).
\end{equation}
Let us evaluate the asymptotic behavior of the expectation.
Recall that $(m+\ell)_{\ell} := \prod_{i=1}^{\ell} (m+i)$ and then we have
\begin{equation*}
    \E\bigg[\sum_{j=k_n}^\infty W_{j,n}'\bigg] \sim n \sum_{j=k_n}^\infty \frac{(n\theta r_n^d)^j}{j!} e^{-n\theta r_n^d} \sim \frac{n}{\sqrt{2\pi k_n}} \bigg(\frac{en\theta r_n^d}{k_n}\bigg)^{k_n} \sum_{j=0}^\infty \frac{(n\theta r_n^d)^j}{(k_n+j)_j} e^{-n\theta r_n^d}.
\end{equation*}
First, we observe that the upper bound is given by
\begin{equation*}
    \sum_{j \geq 0} \frac{(n\theta r_n^d)^j}{(k_n+j)_j} \leq \sum_{j \geq 0} \bigg(\frac{n\theta r_n^d}{k_n}\bigg)^j = \sum_{j \geq 0} (1-\varepsilon_n)^j = \frac{1}{\varepsilon_n}.
\end{equation*}
On the other hand, for a sequence $(\alpha_n)_{n \in \N} \subset \N$, we have the lower bound
\begin{equation*}
    \sum_{j \geq 0} \frac{(n\theta r_n^d)^j}{(k_n+j)_j} \geq \sum_{j=0}^{\alpha_n} \bigg(\frac{n\theta r_n^d}{k_n+\alpha_n}\bigg)^j \geq \frac{1-(1-\varepsilon_n)^{\alpha_n+1}}{1-(1-\varepsilon_n)(k_n/(k_n+\alpha_n))}
\end{equation*}
If we can find a sequence $(\alpha_n)_{n \in \N}$ such that $\varepsilon_n^{-1} \ll \alpha_n \ll \varepsilon_n k_n$, it then follows that the lower bound is asymptotically equivalent to $\varepsilon_n^{-1}$, and so is the original sum.

We now define $\varepsilon_n$ implicitly by
\begin{equation*}
    k_n (-\varepsilon_n-\log (1-\varepsilon_n)) = \log n - \frac{1}{2} \log \log n - \log (2\sqrt{\pi} \beta)
\end{equation*}
and show that this choice of $\varepsilon_n$ satisfies (\ref{equation.03}).
Indeed, $g(x) := x \sqrt{-x^{-1} - x^{-2} \log (1-x)}$ is a smooth function with a non-vanishing derivative at the origin and
\begin{equation*}
    g(\varepsilon_n) = \sqrt{\frac{\log n - \frac{1}{2} \log \log n - \log (2\sqrt{\pi} \beta)}{k_n}}
\end{equation*}
holds by the definition of $\varepsilon_n$.
Since the Taylor expansion of the inverse function $g^{-1}$ near the origin is given by
\begin{equation}\label{equation.07}
    g^{-1}(x) = \sqrt{2}x - \frac{2}{3}x^2 + O(x^3),
\end{equation}
we see that $\varepsilon_n = \sqrt{2\log n/k_n}(1+o(1))$ and there exists $(\alpha_n)_{n \in \N}$ such that $\varepsilon_n^{-1} \ll \alpha_n \ll \varepsilon_n k_n$.
Therefore, we have
\begin{equation*}
    n \sum_{j \geq k_n} \frac{(n\theta r_n^d)^j}{j!} e^{-n\theta r_n^d} \sim \frac{n}{\sqrt{2\pi k_n}} e^{k_n} (1-\varepsilon_n)^{k_n} \frac{1}{\varepsilon_n} e^{-(1-\varepsilon_n) k_n} = \frac{n}{\sqrt{2\pi k_n}} e^{\varepsilon_n k_n} (1-\varepsilon_n)^{k_n} \frac{1}{\varepsilon_n}.
\end{equation*}
The above expression converges to $\beta$ if and only if
\begin{equation}\label{equation.08}
    \log n - \log \sqrt{2\pi k_n} + \varepsilon_n k_n + k_n \log (1-\varepsilon_n) - \log \varepsilon_n \to \log \beta
\end{equation}
holds, and (\ref{equation.08}) is immediately verified by the definition of $\varepsilon_n$.
Thus, we obtain (\ref{equation.03}) for this choice of $\varepsilon_n$.

\begin{lemma}\label{Lemma.06}
    Suppose that $d \geq 2$ and $(\lambda(n))_{n \in \N}$ satisfies $n-n^{3/4} \leq \lambda(n) \leq n+n^{3/4}$ for each $n \in \N$.
    Let $f$ be the density function $\mathbf{1}_{[-1/2,1/2]^d}$.
    If a sequence $(r_n)_{n \in \N}$ satisfies \eqref{equation.03} for $(k_n)_{n \in \N}$ such that $\log n \ll k_n \ll (\log n)^d$, then for any two real numbers $0 \leq s_1 < s_2 \leq \infty$, it holds that $\sum_{k_n+s_1\varepsilon_n^{-1} \leq j < k_n+s_2\varepsilon_n^{-1}} W_{j,\lambda(n)}' \xrightarrow{d} \mathrm{Po}(\beta(e^{-s_1} - e^{-s_2}))$.
\end{lemma}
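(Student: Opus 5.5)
The plan follows the proof of Lemma~\ref{Lemma.02}: compute the limiting mean, then verify the Poisson approximation of Lemma~\ref{Lemma.01} with $A_n := \{j \in \Nnn : k_n + s_1\varepsilon_n^{-1} \leq j < k_n + s_2\varepsilon_n^{-1}\}$.

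\emph{Limiting mean.} Write $\mu_n := \lambda(n)\theta r_n^d$. Since $\mu_n = (1-\varepsilon_n)k_n(1+O(n^{-1/4}))$, we have $(\lambda(n)/n)^{k_n} \to 1$, so the boundary effects of the cube are negligible, as before. This gives
\begin{equation*}
\E[W_{A_n,\lambda(n)}'] \sim n\,\frac{\mu_n^{k_n}}{k_n!}\,e^{-\mu_n}\sum_{j \in A_n - k_n} \frac{\mu_n^j}{(k_n+j)_j}.
\end{equation*}
For $j \leq K\varepsilon_n^{-1}$ we have $\log(\mu_n^j/(k_n+j)_j) = j\log(1-\varepsilon_n) - O(j^2/k_n) = -j\varepsilon_n + o(1)$, uniformly in such $j$. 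The $o(1)$ holds because $\varepsilon_n^{-2}/k_n \asymp 1/\log n \to 0$ and $j\varepsilon_n^2 \to 0$. The sum over $A_n - k_n$ is therefore a Riemann sum, equal to $\varepsilon_n^{-1}(e^{-s_1}-e^{-s_2})(1+o(1))$, after truncating at $K$ for $s_2 = \infty$ using the geometric bound $(1-\varepsilon_n)^j$. Comparing with the computation leading to \eqref{equation.08}, where the full sum was $\sim \varepsilon_n^{-1}$, yields the limit $\beta(e^{-s_1}-e^{-s_2})$.

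\emph{The error terms.} For $I_1(n)$, the same bound as in Lemma~\ref{Lemma.02} gives $I_1(n) \leq c\,(\E W')^2 r_n^d \to 0$. For $I_2(n)$, I rescale $x = r_n x'$ and $y = r_n y'$ and, as before, condition on $\CP(B_y) + 1 = k_n + J$ with $J$ in the window. The probability of that conditioning event, summed over the window, contributes the factor $cn^{-1}$. The key change is that the window now has width $(s_2-s_1)\varepsilon_n^{-1}$, which diverges, so the requirement ``$J_2 - J_1 \leq \alpha k_n F_{y-x}$'' must be replaced.

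I will split $y$ according to whether $k_n F_{y-x}\varepsilon_n$ is large or small. Conditionally, the degree of $x$ is $k_n + J - 1 - \mathrm{Bi}(k_n+J-1, F_{y-x}) + \mathrm{Po}(\mu_n F_{y-x})$. Its displacement from $k_n + J$ has mean $-k_n\varepsilon_n F_{y-x}(1+o(1))$ and standard deviation $\asymp \sqrt{k_n F_{y-x}}$.
\begin{itemize}
\item[(i)] When $k_n\varepsilon_n F_{y-x}$ is much larger than the window width $\varepsilon_n^{-1}$, Chernoff bounds for the binomial and the Poisson (Appendix) show that the degree of $x$ leaves the window. The resulting probability is $\exp(-c\,k_n F_{y-x}\varepsilon_n^2)$, using $k_n\varepsilon_n^2 \asymp \log n$.
\item[(ii)] Otherwise I bound the conditional probability by $1$. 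This region has $\|y-x\|_2 \lesssim (k_n\varepsilon_n^2)^{-1} \asymp 1/\log n$, by $\inf_z F_z/\|z\|_2 > 0$ from \cite[Proposition~5.16]{MR1986198}.
\end{itemize}
Integrating over $y \in B(x;3)$, both regions contribute $O((\log n)^{-d})$. Multiplying by $\lambda(n)^2 r_n^{2d} n^{-1} r_n^{-d}$, which is of order $k_n$, gives
\begin{equation*}
I_2(n) \lesssim k_n(\log n)^{-d} + k_n r_n \to 0.
\end{equation*}
This is precisely where the hypothesis $k_n \ll (\log n)^d$ enters. The boundary layer is handled as in Lemma~\ref{Lemma.02}.

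\emph{Main obstacle.} The main obstacle is step (i)/(ii): the conditional Chernoff estimate must be made uniform over $J$ in the diverging window and over the windows' location. The $cn^{-1}$ factor also needs care, since the conditioning now runs over $\sim \varepsilon_n^{-1}$ values of $J$; I would sum $\Prob[\CP(B_y)+1 = k_n+J]$ over the window, which is still $O(n^{-1})$ by the mean computation. Finally, the case $s_2 = \infty$ follows by truncation together with Lemma~\ref{Lemma.07}, exactly as in Lemma~\ref{Lemma.02}.
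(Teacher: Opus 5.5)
Your proposal is correct and follows the paper's proof essentially step for step. It uses the same geometric/Riemann-sum mean computation giving $\varepsilon_n^{-1}(e^{-s_1}-e^{-s_2})$ and the same $I_1$ bound. For $I_2$ it uses the same rescaling, conditioning and binomial/Poisson Chernoff bounds, valid where $(s_2-s_1)\varepsilon_n^{-1}\le\alpha\varepsilon_n k_n F_{y-x}$; the complementary region has volume $O((\log n)^{-d})$, which yields $I_2(n)\lesssim k_n(\log n)^{-d}+k_n r_n$. The case $s_2=\infty$ is handled by truncation via Lemma~\ref{Lemma.07}. The one minor difference is that you sum over the conditioning value $J$ with a bound uniform in $J$, whereas the paper conditions only on the top value $\lfloor k_n+s_2\varepsilon_n^{-1}\rfloor$; your version is, if anything, the more careful of the two.
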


\begin{proof}
    Note that for $s \in \R$, it holds that
    \begin{equation*}
        \E\bigg[\sum_{k_n \leq j < k_n+s\varepsilon_n^{-1}} W_{j,\lambda(n)}'\bigg] = \lambda(n) \frac{(\lambda(n)\theta r_n^d)^{k_n}}{k_n!} \sum_{0 \leq j < s\varepsilon_n^{-1}} \frac{(\lambda(n)\theta r_n^d)^j}{(k_n+j)_j} e^{-\lambda(n)\theta r_n^d}
    \end{equation*}
    The asymptotic behavior of the summation is given by
    \begin{equation*}
        \sum_{0 \leq j < s\varepsilon_n^{-1}} \frac{(\lambda(n)\theta r_n^d)^j}{(k_n+j)_j} \sim \frac{1-(1-\varepsilon_n)^{s\varepsilon_n^{-1}}}{1-(1-\varepsilon_n)} \sim \frac{1-e^{-s}}{\varepsilon_n}.
    \end{equation*}
    Since the sequence $(r_n)_{n \in \N}$ is chosen to satisfy \eqref{equation.03}, we consequently obtain
    \begin{equation*}
        \E\bigg[\sum_{k_n \leq j < k_n+s\varepsilon_n^{-1}} W_{j,\lambda(n)}'\bigg] \to \beta (1-e^{-s}), \quad \E\bigg[\sum_{k_n+s_1\varepsilon_n^{-1} \leq j < k_n+s_2\varepsilon_n^{-1}} W_{j,\lambda(n)}'\bigg] \to \beta (e^{-s_1} - e^{-s_2}).
    \end{equation*}
    As in the connectivity regime, let $A_n := \{\ell \in \N : k_n+s_1\varepsilon_n^{-1} \leq \ell < k_n+s_2\varepsilon_n^{-1}\}$ and define
    \begin{equation*}
        \begin{aligned}
            &I_1(n) := \lambda(n)^2 \int_{\Rd} F(dx) \int_{B(x; 3r_n)} F(dy) \Prob[\CP_{\lambda(n)}(B_x^n) \in A_n] \Prob[\CP_{\lambda(n)}(B_y^n) \in A_n],\\
            &I_2(n) := \lambda(n)^2 \int_{\Rd} F(dx) \int_{B(x; 3r_n)} F(dy) \Prob[\{\CP_{\lambda(n)}^y(B_x^n) \in A_n\} \cap \{\CP_{\lambda(n)}^x(B_y^n) \in A_n\}].
        \end{aligned}
    \end{equation*}
    If we can verify that these integrals vanish as $n \to \infty$, then the proof for the case where $s_2 < \infty$ is complete.

    First, we assume that $s_2 < \infty$.
    By the definition of $I_1(n)$, we obtain
    \begin{equation*}
        I_1(n) \leq \bigg(\lambda(n) \sum_{j \in A_n} \frac{(\lambda(n)\theta r_n^d)^j}{j!} e^{-\lambda(n)\theta r_n^d}\bigg)^2 3^d r_n^d \leq cr_n^d \to 0.
    \end{equation*}
    In what follows, we estimate $I_2(n)$.
    In a manner similar to that in the connectivity regime, we see that
    \begin{equation*}
        \begin{aligned}
            &I_2(n) \leq \lambda(n)^2 r_n^{2d} \int_{[-1/(2r_n), 1/(2r_n)]^d} \int_{B(x; 3)} h_n(x,y) \,dydx,\\
            &h_n(x,y) := \Prob[\{\CP_{\lambda(n)r_n^d}^y(B_x) \in A_n\} \cap \{\CP_{\lambda(n)r_n^d}^x(B_y) \in A_n\}],
        \end{aligned}
    \end{equation*}
    where $\CP_{\lambda(n)r_n^d}$ denotes a Poisson point process with intensity $\lambda(n)r_n^d \mathbf{1}_{[-1/(2r_n),1/(2r_n)]^d}$.
    Then,
    \begin{equation*}
        h_n(x,y) \leq cn^{-1} \Prob[\CP_{\lambda(n)r_n^d}(B_x) \in A_n \mid \CP_{\lambda(n)r_n^d}(B_y) = \lfloor k_n+s_2\varepsilon_n^{-1} \rfloor]
    \end{equation*}
    holds.
    The conditional probability equals
    \begin{equation*}
        \Prob[\lfloor k_n+s_2\varepsilon_n^{-1} \rfloor - \mathrm{Bi}(\lfloor k_n+s_2\varepsilon_n^{-1} \rfloor, F_{y \setminus x}^n) + \mathrm{Po}(\lambda(n)r_n^d F_n(B_x \setminus B_y)) \in A_n],
    \end{equation*}
    which is bounded above by
    \begin{equation*}
        \begin{aligned}
            &\Prob[\mathrm{Bi}(\lfloor k_n+s_2\varepsilon_n^{-1} \rfloor, F_{y \setminus x}^n) \leq (1-\varepsilon_n/2) k_n F_{y \setminus x}]\\
            &+ \Prob[\mathrm{Po}(\lambda(n)r_n^d F_n(B_x \setminus B_y)) \geq (1-\varepsilon_n/2) k_n F_{y \setminus x}^n - (s_2-s_1)\varepsilon_n^{-1}].
        \end{aligned}
    \end{equation*}
    If $x \in [-1/(2r_n), 1/(2r_n)]^d$ is at a distance of at least $4$ from the boundary, then $F_{y \setminus x}^n = F_{y-x}$ holds for any $y \in B(x; 3)$.
    Moreover, under the condition that $(s_2-s_1)\varepsilon_n^{-1} \leq \alpha \varepsilon_n k_n F_{y-x}$ for a sufficiently small $\alpha > 0$, we obtain
    \begin{equation*}
        \begin{aligned}
            h_n(x,y) &\leq cn^{-1} \bigg(\exp\bigg(-c_1 k_n F_{y-x} H\bigg(1-\frac{\varepsilon_n}{4}\bigg)\bigg) + \exp\bigg(-c_1 k_n F_{y-x} H\bigg(1+\frac{\varepsilon_n}{4}\bigg)\bigg)\bigg)\\
            &\leq cn^{-1} \exp(-c_1 \varepsilon_n^2 k_n F_{y-x})\\
            &\leq cn^{-1} \exp(-c_1 F_{y-x} \log n)
        \end{aligned}
    \end{equation*}
    by the tail estimates (see Appendix).
    Putting all these estimates together, we have
    \begin{equation*}
        \begin{aligned}
            I_2(n) &\leq c n^2 r_n^{2d} n^{-1} \bigg(\int_{[-1/(2r_n), 1/(2r_n)]^d} \int_{B(x; 3)} \exp(-c_1 F_{y-x} \log n) \,dydx + r_n^{1-d} + r_n^{-d}(\log n)^{-d}\bigg)\\
            &\leq c k_n r_n^d \bigg(r_n^{-d} \int_{B(\bz; 3)} \exp(-c_1 \|z\|_2 \log n) \,dz + r_n^{1-d} + r_n^{-d}(\log n)^{-d}\bigg)\\
            &\leq c(k_n (\log n)^{-d} + k_n r_n)\\
            &\to 0,
        \end{aligned}
    \end{equation*}
    where $\|\cdot\|_2$ denotes the Euclidean norm, and we used $r_n^d = \Theta(k_n/n)$, $k_n = o((\log n)^d)$ and the fact that $\inf_{z \in B(\bz; 3)} (F_z/\|z\|_2) > 0$ (see \cite[Proposition~5.16]{MR1986198}).
    This completes the proof for the case where $s_2 < \infty$.
    The case where $s_2 = \infty$ is readily shown by truncation (see Appendix).
\end{proof}

\begin{remark}
    We rely on the assumption $k_n = o((\log n)^d)$ to guarantee the vanishing of $I_2(n)$.
    When $k_n \gtrsim (\log n)^d$, this estimate is no longer valid, which introduces the potential for high-degree vertices to cluster around one another. In such cases, a Poisson approximation might not be applicable.
    Investigating this regime remains a subject for future research.
\end{remark}

Since the subsequent de-Poissonization follows by exactly the same argument as before, we omit the proof.

\begin{proposition}\label{Proposition.02}
    Let $d \geq 2$ and $f = \mathbf{1}_{[-1/2,1/2]^d}$.
    If a sequence $(r_n)_{n \in \N}$ satisfies \eqref{equation.03} for $(k_n)_{n \in \N}$ such that $\log n \ll k_n \ll (\log n)^d$, then for any two real numbers $0 \leq s_1 < s_2 \leq \infty$, it holds that $\sum_{k_n+s_1\varepsilon_n^{-1} \leq j < k_n+s_2\varepsilon_n^{-1}} W_{j,n} \xrightarrow{d} \mathrm{Po}(\beta(e^{-s_1} - e^{-s_2}))$.
\end{proposition}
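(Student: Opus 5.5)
We de-Poissonize Lemma~\ref{Lemma.06} by the same coupling as in the proof of Proposition~\ref{Proposition.01}. Let $N_n^-$ and $M_n$ be independent Poisson variables with means $n-n^{3/4}$ and $2n^{3/4}$, and set $\CP_n^- := \sum_{i=1}^{N_n^-}\delta_{X_i}$ and $\CP_n^+ := \sum_{i=1}^{N_n^-+M_n}\delta_{X_i}$. By Chebyshev's inequality, $N_n^- \le n \le N_n^-+M_n$ with probability tending to one, and on this event $\CP_n^- \subset \CX_n \subset \CP_n^+$. Degrees are monotone under adding points, so for every threshold $t$
\begin{equation*}
\sum_{j\ge t} W_{j,n}^- \le \sum_{j\ge t} W_{j,n} \le \sum_{j\ge t} W_{j,n}^+ .
\end{equation*}
We apply this with $t_i := k_n+s_i\varepsilon_n^{-1}$ for $i=1,2$. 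Subtracting the two resulting inequalities sandwiches the window count $\sum_{t_1\le j<t_2} W_{j,n}$ between
\begin{equation*}
\sum_{t_1\le j<t_2}W_{j,n}^- - D_n \quad\text{and}\quad \sum_{t_1\le j<t_2}W_{j,n}^+ + D_n, \qquad D_n:=\sum_{j\ge t_2}(W_{j,n}^+-W_{j,n}^-)\ge 0 .
\end{equation*}
If $s_2=\infty$, only the first inequality is needed and the $D_n$ term disappears.

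Lemma~\ref{Lemma.06} applies to $\CP_n^-$ and $\CP_n^+$, since their means are $n\pm n^{3/4}$. Hence both Poissonized window counts converge in distribution to $\mathrm{Po}(\beta(e^{-s_1}-e^{-s_2}))$. It therefore suffices to show that
\begin{equation*}
\E\Big[\sum_{t_1\le j<t_2}(W_{j,n}^+-W_{j,n}^-)\Big]\to0 \quad\text{and}\quad \E[D_n]\to 0 .
\end{equation*}
Then the gap between the two sides of the sandwich tends to $0$ in $L^1$, and Slutsky's theorem gives the claim. For $D_n$, Lemma~\ref{Lemma.06} together with its expectation computation shows that $\E\big[\sum_{j\ge t}W^{\pm}_{j,n}\big]$ converges to the same limit $\beta e^{-s}$ for both processes. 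The difference of expectations of nonnegative counts with a common limit tends to zero, so $\E[D_n]\to0$. The window term is handled in the same way.

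The one step needing care is this matching of expectations under the $\pm n^{3/4}$ perturbation, which is more delicate here than in the connectivity regime. In the formula
\begin{equation*}
\lambda\,\frac{(\lambda\theta r_n^d)^{k_n}}{k_n!}\sum_{j\ge 0}\frac{(\lambda\theta r_n^d)^j}{(k_n+j)_j}\,e^{-\lambda\theta r_n^d},
\end{equation*}
replacing $n$ by $\lambda=n(1+O(n^{-1/4}))$ changes the logarithm of the prefactor by $O(k_n n^{-1/4})$. Since $k_n\ll(\log n)^d$, this is $o(1)$. The ratio $\lambda\theta r_n^d/k_n$ changes from $1-\varepsilon_n$ to $1-\varepsilon_n+O(n^{-1/4})$, which is negligible relative to $\varepsilon_n\asymp\sqrt{\log n/k_n}$ because $\varepsilon_n \ge (\log n)^{-(d-1)/2}\gg n^{-1/4}$. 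Consequently the geometric-type sum asymptotics $\sim(1-e^{-s})/\varepsilon_n$ used in Lemma~\ref{Lemma.06} are unchanged. Both Poissonized expectations thus have the same limit, $\E[D_n]\to0$, and the argument of Proposition~\ref{Proposition.01} goes through verbatim.
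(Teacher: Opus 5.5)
Your proposal is correct and follows the same route as the paper, which omits the proof and says the de-Poissonization is the same coupling-and-sandwich argument as in Proposition~\ref{Proposition.01}, now applied to Lemma~\ref{Lemma.06}. Your check that the $\pm n^{3/4}$ change of intensity leaves the expectation asymptotics unchanged (via $k_n n^{-1/4} \to 0$ and $n^{-1/4} \ll \varepsilon_n$) supplies the step the paper leaves implicit when it asserts that the expected gap between the two sides of the sandwich vanishes.
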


\begin{proof}[Proof of Theorem~\ref{Theorem.04}]
    Let $x \in \R$ and $\beta := e^{-x}$.
    We define $\varepsilon_n'$ implicitly by
    \begin{equation*}
        k_n(-\varepsilon_n'-\log (1-\varepsilon_n')) = \log n - \frac{1}{2} \log \log n.
    \end{equation*}
    Note that $\varepsilon_n'$ does not depend on $\beta$.
    Moreover, it follows from (\ref{equation.07}) that
    \begin{equation*}
        \sqrt{\frac{2\log n}{k_n}} k_n  \varepsilon_n' - \log (2\sqrt{\pi} \beta) = \sqrt{\frac{2\log n}{k_n}} k_n \varepsilon_n + o(1).
    \end{equation*}
    By the definition of $\varepsilon_n$, we have
    \begin{equation*}
        \begin{aligned}
            \Prob[S_{k_n}(\CX_n) > r_n] &= \Prob[-n\theta S_{k_n}(\CX_n)^d < -n\theta r_n^d]\\
            &= \Prob[-n\theta S_{k_n}(\CX_n)^d < -(1-\varepsilon_n) k_n],
        \end{aligned}
    \end{equation*}
    which coincides with
    \begin{equation*}
        \Prob\bigg[\sqrt{\frac{2\log n}{k_n}}(-n\theta S_{k_n}(\CX_n)^d + k_n - \varepsilon_n k_n) < 0\bigg].
    \end{equation*}
    Here, from the relation between $\varepsilon_n$ and $\varepsilon_n'$, the above probability can be rewritten as
    \begin{equation*}
        \Prob\bigg[\sqrt{\frac{2\log n}{k_n}} \bigg(-n\theta S_{k_n}(\CX_n)^d + k_n - \varepsilon_n' k_n + \sqrt{\frac{k_n}{2\log n}} \log (2\sqrt{\pi})\bigg) + o(1) < x \bigg].
    \end{equation*}
    It follows from Proposition~\ref{Proposition.02} that this expression converges to $\exp(-\exp(-x))$, which implies the convergence in distribution to a Gumbel random variable.
    The asymptotic behavior of $(a_n)_{n \in \N}$ is an immediate consequence of the Taylor expansion (\ref{equation.07}) of $g^{-1}$.
\end{proof}

Next, we turn our attention to the case of a non-uniform density.
Similar to the uniform case, we seek a sequence $(r_n)_{n \in \N}$ such that
\begin{equation*}
    \frac{n\theta r_n^d f_{\max}}{k_n} = 1-\varepsilon_n, \quad (0,\infty) \ni \varepsilon_n \to 0 \quad \text{and} \quad \E\bigg[\sum_{j=k_n}^\infty W_{j,n}'\bigg] \to \beta \in (0,\infty).
\end{equation*}
As in the connectivity regime, we obtain
\begin{equation*}
    \begin{aligned}
        \E\bigg[\sum_{j \geq k_n} W_{j,n}'\bigg] &= n \frac{(n\theta r_n^d f_{\max})^{k_n}}{k_n!} e^{-n\theta r_n^d f_{\max}} \times \int_{\Rd} \bigg(\frac{F(B(x; r_n))}{\theta r_n^d f_{\max}}\bigg)^{k_n} \sum_{j \geq 0} \frac{(nF(B(x; r_n)))^j}{(k_n+j)_j}\\
        &\hspace{14em} \times \exp(n\theta r_n^d f_{\max} - nF(B(x; r_n))) f(x) \,dx.
    \end{aligned}
\end{equation*}
Let us now estimate the integral.
Let $\rho_0 > \rho_1 > 0$ and $\varepsilon_0 > 0$ be given as in Condition~\ref{Condition.01}.
Since $t^{k_n} e^{-t}$ is increasing on $[0,k_n]$ and $n\theta r_n^d f_{\max} \leq k_n$, it follows that
\begin{equation*}
    \begin{aligned}
        &\int_{\Rd \setminus B(\bz; \rho_1+r_n)} \bigg(\frac{F(B(x; r_n))}{\theta r_n^d f_{\max}}\bigg)^{k_n} \sum_{j \geq 0} \frac{(nF(B(x; r_n)))^j}{(k_n+j)_j} \exp(n\theta r_n^d f_{\max} - nF(B(x; r_n))) f(x) \,dx\\
        &\leq \int_{\Rd \setminus B(\bz; \rho_1+r_n)} (1-\varepsilon_0)^{k_n} \frac{1}{\varepsilon_n} \exp(\varepsilon_0 n\theta r_n^d f_{\max}) f(x) \,dx\\
        &\leq \frac{1}{\varepsilon_n} \exp(-c \varepsilon_n k_n).
    \end{aligned}
\end{equation*}

On the other hand, since $\rho_1+r_n \leq (\rho_0+\rho_1)/2$ for all sufficiently large $n$ and $f$ is Lipschitz continuous on $B(\bz; (\rho_0+\rho_1)/2)$, we have
\begin{equation*}
    \begin{aligned}
        &\int_{B(\bz; \rho_1+r_n)} \bigg(\frac{F(B(x; r_n))}{\theta r_n^d f_{\max}}\bigg)^{k_n} \sum_{j \geq 0} \frac{(nF(B(x; r_n)))^j}{(k_n+j)_j} \exp(n\theta r_n^d f_{\max} - nF(B(x; r_n))) f(x) \,dx\\
        &= \int_{B(\bz; \rho_1+r_n)} \bigg(\frac{f(x)}{f_{\max}}\bigg)^{k_n} (1+O(r_n))^{k_n} \sum_{j \geq 0} \frac{(n\theta r_n^d f(x))^j (1+O(r_n))^j}{(k_n+j)_j}\\
        &\quad \times \exp(n\theta r_n^d f_{\max} - n\theta r_n^d f(x)(1+O(r_n))) f(x) \,dx\\
        &\sim \int_{B(\bz; \rho_1+r_n)} \bigg(\frac{f(x)}{f_{\max}}\bigg)^{k_n} \sum_{j \geq 0} \frac{(n\theta r_n^d f(x))^j}{(k_n+j)_j} \exp(n\theta r_n^d f_{\max} - n\theta r_n^d f(x)) f(x) \,dx,
    \end{aligned}
\end{equation*}
where we used the facts that $k_n r_n \to 0$ and $n\theta r_n^d \times r_n \to 0$, which follow from the assumptions $n\theta r_n^d = k_n (1+o(1))$ and $k_n = o((\log n)^d)$.
Now, we define $\varepsilon_n$ implicitly by
\begin{equation*}
    k_n(-\varepsilon_n - \log (1-\varepsilon_n)) = \log n - \frac{d}{4m} \log k_n - \frac{1}{2}\bigg(1+\frac{d}{2m}\bigg)\log \log n - \log \frac{2^{1+d/(4m)} \sqrt{\pi} \beta}{\gamma f_{\max}^{1+d/(2m)}}.
\end{equation*}
Since $\log k_n = \Theta(\log \log n)$ by the assumption $\log n \ll k_n \ll (\log n)^d$, it holds from the Taylor expansion (\ref{equation.07}) of $g^{-1}$ that the first order of $\varepsilon_n$ is given by $\sqrt{2\log n/k_n}$.
By the Taylor expansion of $f$, if we change the variable to $x = (\varepsilon_n/\log n)^{1/(2m)} z = \widetilde{\varepsilon}_n z$, then the above integrand is asymptotic to
\begin{align*}
    &\bigg(1+\frac{\varepsilon_n \partial^{2m}f(\xi_z \widetilde{\varepsilon}_n z)[z^{\otimes 2m}]}{(2m)! f_{\max} \log n}\bigg)^{k_n} \frac{1}{\varepsilon_n} \frac{1}{1-\partial^{2m}f(\xi_z \widetilde{\varepsilon}_n z)[z^{\otimes 2m}]/((2m)! f_{\max} \log n)}\\
    &\quad \times \exp\bigg(\varepsilon_n n\theta r_n^d f_{\max} \frac{\partial^{2m}f(\xi_z \widetilde{\varepsilon}_n z)[z^{\otimes 2m}]}{(2m)! f_{\max} \log n}\bigg) f(\widetilde{\varepsilon}_n z)\\
    &\sim \frac{1}{\varepsilon_n} \exp\bigg(\frac{\varepsilon_n^2 k_n}{\log n} \times \frac{\partial^{2m}f(\xi_z \widetilde{\varepsilon}_n z)[z^{\otimes 2m}]}{(2m)! f_{\max}}\bigg) f_{\max}\\
    &\sim \frac{1}{\varepsilon_n} \exp\bigg(\frac{\partial^{2m}f(\bz)[z^{\otimes 2m}]}{(2m)! f_{\max}/2}\bigg) f_{\max}.
\end{align*}
Since the integrand is dominated by $\varepsilon_n^{-1} \exp(\partial^{2m}f(\bz)[z^{\otimes 2m}]/((2m)! f_{\max})) f_{\max}$ for all sufficiently large $n$, the dominated convergence theorem implies that
\begin{equation*}
    \begin{aligned}
        &\int_{B(\bz; \rho_1+r_n)} \bigg(\frac{f(x)}{f_{\max}}\bigg)^{k_n} \sum_{j \geq 0} \frac{(n\theta r_n^d f(x))^j}{(k_n+j)_j} \exp(n\theta r_n^d f_{\max} - n\theta r_n^d f(x)) f(x) \,dx\\
        &\sim \frac{1}{\varepsilon_n} \bigg(\frac{\varepsilon_n}{\log n}\bigg)^{d/(2m)} \int_{\Rd} \exp\bigg(\frac{\partial^{2m}f(\bz)[z^{\otimes 2m}]}{(2m)! f_{\max}/2}\bigg) f_{\max}\,dz\\
        &= \frac{1}{\varepsilon_n} \bigg(\frac{\varepsilon_n}{2\log n}\bigg)^{d/(2m)} \gamma f_{\max}^{1+d/(2m)}.
    \end{aligned}
\end{equation*}
Hence, we have
\begin{equation*}
    \begin{aligned}
        \E\bigg[\sum_{j \geq k_n} W_{j,n}'\bigg] &\sim \gamma f_{\max}^{1+d/(2m)} n \frac{(n\theta r_n^d f_{\max})^{k_n}}{k_n!} \frac{1}{\varepsilon_n} \bigg(\frac{\varepsilon_n}{2\log n}\bigg)^{d/(2m)} e^{-n\theta r_n^d f_{\max}}\\
        &\sim \gamma f_{\max}^{1+d/(2m)} \frac{n}{\sqrt{2\pi k_n}} e^{\varepsilon_n k_n} (1-\varepsilon_n)^{k_n} \frac{1}{\varepsilon_n} \bigg(\frac{\varepsilon_n}{2\log n}\bigg)^{d/(2m)}.
    \end{aligned}
\end{equation*}
The convergence of this expression to $\beta$ is equivalent to the condition that
\begin{equation*}
    \begin{aligned}
        &\log n - \log \sqrt{k_n} + \varepsilon_n k_n + k_n \log (1-\varepsilon_n) + \bigg(\frac{d}{2m} - 1\bigg)\log \varepsilon_n - \frac{d}{2m}\log \log n + \log \frac{\gamma f_{\max}^{1+d/(2m)}}{\sqrt{2\pi} 2^{d/(2m)}}\\
        &= \log n - \frac{d}{4m} \log k_n - \frac{1}{2}\bigg(1+\frac{d}{2m}\bigg)\log \log n + \log \frac{\gamma f_{\max}^{1+d/(2m)}}{\sqrt{\pi} 2^{1+d/(4m)}} + \varepsilon_n k_n + k_n \log (1-\varepsilon_n) + o(1)\\
        &\to \log \beta,
    \end{aligned}
\end{equation*}
which is immediately verified by the definition of $\varepsilon_n$.

If we define $\varepsilon_n'$ implicitly by
\begin{equation*}
    k_n(-\varepsilon_n' - \log (1-\varepsilon_n')) = \log n - \frac{d}{4m} \log k_n - \frac{1}{2}\bigg(1+\frac{d}{2m}\bigg)\log \log n,
\end{equation*}
then it follows that
\begin{equation*}
    \sqrt{\frac{2\log n}{k_n}} k_n \varepsilon_n' - \log \frac{2^{1+d/(4m)} \sqrt{\pi} \beta}{\gamma f_{\max}^{1+d/(2m)}} = \sqrt{\frac{2\log n}{k_n}} k_n \varepsilon_n + o(1).
\end{equation*}
Therefore, in a manner nearly identical to the uniform case, we can establish the Poisson approximation for $\sum_{j=k_n}^{n-1} W_{j,n}$, which yields the following convergence in distribution:
\begin{equation*}
    \frac{-n\theta S_{k_n}(\CX_n)^d + a_n}{b_n} \xrightarrow{d} Z,
\end{equation*}
where $Z$ is a standard Gumbel random variable, $b_n := \sqrt{k_n/(2\log n)}$ and the sequence $(a_n)_{n \in \N}$ satisfies
\begin{equation*}
    a_n = k_n - \sqrt{2k_n \log n} + \bigg(\frac{2}{3}\log n + \sqrt{\frac{k_n}{8\log n}} \bigg\{\bigg(1+\frac{d}{2m}\bigg)\log \log n + \frac{d}{4m} \log k_n\bigg\}\bigg) (1+o(1)).
\end{equation*}

\subsection{Point process}
Although the expectation is calculated in a similar manner to the previous section, we must now properly choose $k_n$ depending on the given value of $r_n$ so that the expectation of $\sum_{j=k_n}^\infty W_{j,n}'$ converges to some constant (and such a choice is possible only in the super-connectivity regime).
For simplicity, throughout this section only, we write $\mu_n := n\theta r_n^d$.

Analogously to the previous section, we assume that $k_n = (1+\varepsilon_n) \mu_n$ with $(0,\infty) \ni \varepsilon_n \to 0$ as $n \to \infty$.
If we define $\varepsilon_n$ implicitly by
\begin{equation*}
    \mu_n(-\varepsilon_n + (1+\varepsilon_n) \log (1+\varepsilon_n)) = \mu_n \int_0^{\varepsilon_n} \log (1+t) \,dt = \log n - \frac{1}{2} \log \log n - \log (2\sqrt{\pi}),
\end{equation*}
then it holds that $\varepsilon_n = \sqrt{2\log n/\mu_n} (1+o(1))$.
This leading term of $\varepsilon_n$ can be deduced from the Taylor expansion of the inverse of $\widetilde{g}(x) := x\sqrt{-x^{-1}+ x^{-2}(1+x) \log (1+x)}$ given by
\begin{equation*}
    \widetilde{g}^{-1}(x) = \sqrt{2}x + \frac{1}{3}x^2 + O(x^3).
\end{equation*}
Then, we obtain the asymptotic behavior of the expectation:
\begin{equation*}
    \E\bigg[\sum_{j=k_n}^\infty W_{j,n}'\bigg] \sim \frac{n}{\sqrt{2\pi k_n}} e^{k_n} (1+\varepsilon_n)^{-k_n} \frac{1}{\varepsilon_n} e^{-\mu_n} \sim \frac{n}{\sqrt{2\pi \mu_n}} e^{\varepsilon_n \mu_n} (1+\varepsilon_n)^{-(1+\varepsilon_n)\mu_n} \frac{1}{\varepsilon_n}.  
\end{equation*}
Our goal now is to ensure that this converges to $1$, which is equivalent to the condition that
\begin{equation*}
    \log n - \log \sqrt{2\pi \mu_n} + \varepsilon_n \mu_n - \mu_n (1+\varepsilon_n) \log (1+\varepsilon_n) -\log \varepsilon_n \to 0.
\end{equation*}
This is directly verified by the definition of $\varepsilon_n$.

To prove Theorems~\ref{Theorem.06} and \ref{Theorem.07}, we introduce a dissecting ring $\CU$ and a dissecting semi-ring $\CI$ in $\Rd \times \R = \R^{d+1}$.
We define the dissecting semi-ring $\CI$ as
\begin{equation*}
    \CI := \{(x,y]_{\R^{d+1}} : x,y \in \R^{d+1} ~\text{with}~ x < y\}
\end{equation*}
and the dissecting ring $\CU$ as the collection of all finite disjoint unions of elements of $\CI$.
Then, to complete the proofs of these theorems, it suffices to verify that
\begin{equation*}
    \Phi_n(U) \xrightarrow{d} \CQ(U) = \mathrm{Po}(\Lambda(U))
\end{equation*}
holds for any $U \in \CU$, by an argument almost identical to the proof of Theorem~\ref{Theorem.03}.

\section{Proofs of corollaries}
\begin{proof}[Proof of Corollary~\ref{Corollary.01}]
    Fix $\varepsilon > 0$.
    In the uniform and connectivity regime case, we have
    \begin{equation*}
        \begin{aligned}
            &\Prob\bigg[\bigg|\frac{n\theta S_{k_n}(\CX_n)^d}{k_n} - \frac{-W_0(-e^{-1-1/b})}{f_{\max}}\bigg| \geq \varepsilon\bigg]\\
            &\leq \Prob\bigg[\bigg|-n\theta S_{k_n}(\CX_n)^d + a_n k_n + \frac{1}{2} b_n \log k_n + b_{\infty} \log ((1-a_{\infty}) \sqrt{2\pi})\bigg| \geq k_n \varepsilon/2\bigg]\\
            &\leq \Prob\bigg[\bigg|-n\theta S_{k_n}(\CX_n)^d + a_n k_n + \frac{1}{2} b_n \log k_n + b_{\infty} \log ((1-a_{\infty}) \sqrt{2\pi})\bigg| \geq M\bigg]
        \end{aligned}
    \end{equation*}
    for all sufficiently large $n$ and $0 < M \leq k_n \varepsilon/2$.
    Theorem~\ref{Theorem.01} and Portmanteau theorem yield
    \begin{equation*}
        \limsup_{n \to \infty} \Prob\bigg[\bigg|\frac{n\theta S_{k_n}(\CX_n)^d}{k_n} - \frac{-W_0(-e^{-1-1/b})}{f_{\max}}\bigg| \geq \varepsilon\bigg] \leq \Prob[|Z_b| \geq M].
    \end{equation*}
    Since this holds for any $M > 0$, the left-hand side must be zero.
    This completes the proof for the case.
    The proofs for other cases follow almost identical lines.
\end{proof}

\begin{proof}[Proof of Corollary~\ref{Corollary.02}]
    In the proof of \cite[Theorem~6.14]{MR1986198}, the almost sure limit of the maximum degree $\Delta_n$ is deduced from that of the threshold radius $S_{k_n}(\CX_n)$.
    By simply rephrasing this approach in terms of convergence in probability, the desired result readily follows.
\end{proof}

\begin{proof}[Proof of Corollary~\ref{Corollary.03}]
    Fix $K \in \Z$.
    For $m \geq 1$, we define $V_m := (-m,m]_{\R}^d \times [K+1,K+m]_{\Z}$.
    Let $U_1 := V_1$ and $U_m := V_m \setminus V_{m-1}$ for $m \geq 2$.
    Then, $\{U_m\}_{m=1}^\infty \subset \CU$ forms a collection of mutually disjoint subsets of $\Rd \times \Z$, and thus we have
    \begin{equation*}
        \sum_{m=1}^\infty \Phi_n(U_m) = \Phi_n(\Rd \times [K+1,\infty)_{\Z}).
    \end{equation*}
    Furthermore, Theorem~\ref{Theorem.03} yields that for any $M \geq 1$,
    \begin{equation*}
        \sum_{m=1}^M \Phi_n(U_m) = \Phi_n(V_M) \xrightarrow{d} \mathrm{Po}(\Lambda(V_M)).
    \end{equation*}
    Hence, by the lemma in Appendix~\ref{Appendix.02}, it follows that
    \begin{equation*}
        \Phi_n(\Rd \times [K+1,\infty)_{\Z}) = \sum_{m=1}^\infty \Phi_n(U_m) \xrightarrow{d} \mathrm{Po}(\Lambda(\Rd \times [K+1,\infty)_{\Z})).
    \end{equation*}
    Therefore, we obtain
    \begin{equation*}
        \begin{aligned}
            \Prob[\Delta_n - k_n \leq K] &= \Prob[\Phi_n(\Rd \times [K+1,\infty)_{\Z}) = 0]\\
            &\xrightarrow{n \to \infty} \Prob[\mathrm{Po}(\Lambda(\Rd \times [K+1,\infty)_{\Z})) = 0]\\
            &= \exp(-(-W_0(-e^{-1-1/b}))^{K+1}),
        \end{aligned}
    \end{equation*}
    which completes the proof.
\end{proof}

\begin{proof}[Proof of Corollary~\ref{Corollary.04}]
    Fix $x \in \R$.
    For $m \geq 1$, we define $V_m := (-m,m]_{\R}^d \times (x,x+m]_{\R}$.
    Let $U_1 := V_1$ and $U_m := V_m \setminus V_{m-1}$ for $m \geq 2$.
    Then, $\{U_m\}_{m=1}^\infty \subset \CU$ forms a collection of mutually disjoint subsets of $\Rd \times \R$, and thus we have
    \begin{equation*}
        \sum_{m=1}^\infty \Phi_n(U_m) = \Phi_n(\Rd \times (x,\infty)_{\R}).
    \end{equation*}
    Furthermore, Theorems~\ref{Theorem.06} and \ref{Theorem.07} yield that for any $M \geq 1$,
    \begin{equation*}
        \sum_{m=1}^M \Phi_n(U_m) = \Phi_n(V_M) \xrightarrow{d} \mathrm{Po}(\Lambda(V_M)).
    \end{equation*}
    Hence, by the lemma in Appendix~\ref{Appendix.02}, it follows that
    \begin{equation*}
        \Phi_n(\Rd \times (x,\infty)_{\R}) = \sum_{m=1}^\infty \Phi_n(U_m) \xrightarrow{d} \mathrm{Po}(\Lambda(\Rd \times (x,\infty)_{\R})).
    \end{equation*}
    Therefore, we obtain
    \begin{equation*}
        \begin{aligned}
            \Prob[(\Delta_n - a_n)/b_n \leq x] &= \Prob[\Phi_n(\Rd \times (x,\infty)_{\R}) = 0]\\
            &\xrightarrow{n \to \infty} \Prob[\mathrm{Po}(\Lambda(\Rd \times (x,\infty)_{\R})) = 0]\\
            &= \exp(-\exp(-x)),
        \end{aligned}
    \end{equation*}
    which completes the proof.
\end{proof}

\appendix

\section{Tail estimates and an expectation formula}\label{Appendix.01}
We summarize several fundamental tail estimates and expectation formulas for functionals defined on point configurations that are frequently employed in the study of random geometric graphs.
For their proofs, we refer the reader to \cite[Section~1]{MR1986198}.

Define the function $H : [0,\infty) \to [0,\infty)$ by
\begin{equation*}
    H(t) := 1-t+t\log t \quad (t \in (0,\infty))
\end{equation*}
and $H(0) = 1$ with the convention $0 \log 0 = 0$.
$H(1) = 0$ is the unique zero of $H$, and it holds that $H(t) \geq t$ for all $t \geq 7$.
\begin{lemma}
    Assume that $n, k \in \N$ and $p \in (0,1)$.
    If $k \geq np$, then
    \begin{equation*}
        \Prob[\mathrm{Bi}(n,p) \geq k] \leq \exp\bigg(-np H\bigg(\frac{k}{np}\bigg)\bigg)
    \end{equation*}
    holds and if $k \leq np$,
    \begin{equation*}
        \Prob[\mathrm{Bi}(n,p) \leq k] \leq \exp\bigg(-np H\bigg(\frac{k}{np}\bigg)\bigg)
    \end{equation*}
    holds.
\end{lemma}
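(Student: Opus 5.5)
The two bounds are the classical Chernoff bounds, and I will prove both by the exponential Markov inequality, optimizing over the exponential parameter. Write $\mu := np$.

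For the upper tail, take $k \geq \mu$ and any $s \geq 0$. Markov's inequality applied to $e^{s\mathrm{Bi}(n,p)}$ gives
\begin{equation*}
    \Prob[\mathrm{Bi}(n,p) \geq k] \leq e^{-sk} \E[e^{s\mathrm{Bi}(n,p)}] = e^{-sk} (1+p(e^s-1))^n \leq e^{-sk} \exp(\mu(e^s-1)).
\end{equation*}
The last step uses $1+u \leq e^u$. The exponent $-sk + \mu(e^s - 1)$ is minimized at $s = \log(k/\mu)$, which is admissible because $k \geq \mu$. Substituting this value gives the exponent $-k\log(k/\mu) + k - \mu$, and this equals $-\mu H(k/\mu)$.

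For the lower tail, take $k \leq \mu$ and apply Markov's inequality to $e^{-s\mathrm{Bi}(n,p)}$ with $s \geq 0$:
\begin{equation*}
    \Prob[\mathrm{Bi}(n,p) \leq k] \leq e^{sk} (1+p(e^{-s}-1))^n \leq \exp(sk + \mu(e^{-s}-1)).
\end{equation*}
If $k > 0$, choose $s = \log(\mu/k) \geq 0$. The exponent becomes $k\log(k/\mu) + k - \mu = -\mu H(k/\mu)$, as required.

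The only point needing attention is the boundary case $k = 0$, where the optimizer $s = \log(\mu/k)$ is not available. There I bound directly: $\Prob[\mathrm{Bi}(n,p) = 0] = (1-p)^n \leq e^{-\mu} = \exp(-\mu H(0))$, using the convention $H(0) = 1$. Equivalently, one may let $s \to \infty$ in the displayed bound. Apart from this case, the argument is purely computational.
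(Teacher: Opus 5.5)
Your proof is correct and is the standard Chernoff argument (exponential Markov inequality plus $1+u\le e^u$, optimized at $s=\log(k/\mu)$, resp.\ $s=\log(\mu/k)$), which is exactly the argument the paper defers to in Penrose's book, Section~1. One harmless slip in the lower tail: $sk=k\log(\mu/k)$, so the exponent is $-k\log(k/\mu)+k-\mu$ rather than $k\log(k/\mu)+k-\mu$, and with this sign corrected it does equal $-\mu H(k/\mu)$.
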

\begin{lemma}
    Assume that $\lambda \in (0,\infty)$ and $k \in \N$.
    Then, if $k \geq \lambda$,
    \begin{equation*}
        \Prob[\mathrm{Po}(\lambda) \geq k] \leq \exp\bigg(-\lambda H\bigg(\frac{k}{\lambda}\bigg)\bigg)
    \end{equation*}
    holds and if $k \leq \lambda$,
    \begin{equation*}
        \Prob[\mathrm{Po}(\lambda) \leq k] \leq \exp\bigg(-\lambda H\bigg(\frac{k}{\lambda}\bigg)\bigg)
    \end{equation*}
    holds.
\end{lemma}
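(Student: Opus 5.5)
These are the standard Chernoff bounds for Poisson variables, and I would prove them by the exponential Markov inequality with the parameter optimized. I treat the upper tail first. Fix $k \geq \lambda$ and $s \geq 0$. Markov's inequality applied to $e^{s\,\mathrm{Po}(\lambda)}$ gives
\begin{equation*}
    \Prob[\mathrm{Po}(\lambda) \geq k] \leq e^{-sk} \E[e^{s\,\mathrm{Po}(\lambda)}] = \exp\big(\lambda(e^s - 1) - sk\big).
\end{equation*}
The exponent is minimized at $e^s = k/\lambda$. This choice is admissible, i.e.\ $s = \log(k/\lambda) \geq 0$, exactly because $k \geq \lambda$. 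Substituting it gives the exponent
\begin{equation*}
    \lambda\bigg(\frac{k}{\lambda} - 1\bigg) - k\log\frac{k}{\lambda} = -\lambda\bigg(1 - \frac{k}{\lambda} + \frac{k}{\lambda}\log\frac{k}{\lambda}\bigg) = -\lambda H\bigg(\frac{k}{\lambda}\bigg),
\end{equation*}
which is the first bound.

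For the lower tail, fix $k \leq \lambda$ and $s \geq 0$, and apply Markov's inequality to $e^{-s\,\mathrm{Po}(\lambda)}$:
\begin{equation*}
    \Prob[\mathrm{Po}(\lambda) \leq k] \leq e^{sk}\E[e^{-s\,\mathrm{Po}(\lambda)}] = \exp\big(\lambda(e^{-s} - 1) + sk\big).
\end{equation*}
The optimizer is $e^{-s} = k/\lambda$, which is admissible ($s \geq 0$) because $k \leq \lambda$. Substituting yields the same exponent $-\lambda H(k/\lambda)$.

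The only point needing care is the boundary case $k = 0$, where $k/\lambda = 0$ and the optimizer formally corresponds to $s = \infty$. Here I would compute directly: $\Prob[\mathrm{Po}(\lambda) \leq 0] = e^{-\lambda} = \exp(-\lambda H(0))$, which is consistent with the conventions $H(0) = 1$ and $0\log 0 = 0$. Alternatively, one can let $s \to \infty$ in the Markov bound. Since the statement takes $k \in \N$, this case arises only if $0 \in \N$, and in either case the direct computation settles it.

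No real obstacle is expected. The main thing to check is that the sign of $s$ at the optimizer matches the side of the tail, which is precisely what the hypotheses $k \geq \lambda$ and $k \leq \lambda$ guarantee.
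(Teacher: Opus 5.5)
Your proposal is correct and is the standard Chernoff argument (exponential Markov inequality with the optimal parameter $s = \log(k/\lambda)$ for the upper tail and $s = \log(\lambda/k)$ for the lower tail). The paper gives no proof of its own and defers to Penrose's Section 1, where the bound is obtained by this same moment-generating-function method, and your treatment of $k = 0$ via $H(0)=1$ is also right.
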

\begin{lemma}
    For all $\lambda \in (0,\infty)$ large enough,
    \begin{equation*}
        \Prob[\mathrm{Po}(\lambda) \geq \lambda + \lambda^{3/4}/2] \leq \exp(-\lambda^{1/2}/9), \quad \Prob[\mathrm{Po}(\lambda) \leq \lambda - \lambda^{3/4}/2] \leq \exp(-\lambda^{1/2}/9)
    \end{equation*}
    holds.
\end{lemma}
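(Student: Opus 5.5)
The statement to prove is the last appendix lemma: for large $\lambda$,
\[
\Prob[\mathrm{Po}(\lambda) \geq \lambda + \lambda^{3/4}/2] \leq \exp(-\lambda^{1/2}/9),
\qquad
\Prob[\mathrm{Po}(\lambda) \leq \lambda - \lambda^{3/4}/2] \leq \exp(-\lambda^{1/2}/9).
\]
I would derive both bounds from the Poisson tail estimate stated just before it, namely
\[
\Prob[\mathrm{Po}(\lambda) \geq k] \leq \exp(-\lambda H(k/\lambda)) \quad (k \geq \lambda),
\]
and its lower-tail analogue. That lemma is stated for integer $k$. For a non-integer threshold $x$ we have $\Prob[\mathrm{Po}(\lambda)\geq x]=\Prob[\mathrm{Po}(\lambda)\geq \lceil x\rceil]$. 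Since $H$ is increasing on $[1,\infty)$, replacing $\lceil x\rceil$ by $x$ only weakens the bound. The lower tail is handled the same way with $\lfloor x\rfloor$, using that $H$ is decreasing on $[0,1]$. If one prefers not to rely on the integer restriction at all, both bounds can be rederived directly by the Chernoff method with optimal exponent $\log(x/\lambda)$.

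With $t = 1 \pm \delta$ and $\delta := \lambda^{-1/4}/2$, it remains to bound $\lambda H(1\pm\delta)$ from below by $\lambda^{1/2}/9$. The Taylor expansion of $H$ about $1$ uses $H(1)=0$, $H'(t)=\log t$ and $H''(t)=1/t$. It gives
\[
H(1+\delta) = \frac{\delta^2}{2} - \frac{\delta^3}{6} + O(\delta^4),
\qquad
H(1-\delta) = \frac{\delta^2}{2} + \frac{\delta^3}{6} + \cdots \geq \frac{\delta^2}{2}.
\]
For the lower tail, the bound $H(1-\delta)\geq \delta^2/2$ holds exactly for $\delta\in[0,1)$, since $H'' = 1/t \geq 1$ on $(0,1]$. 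For the upper tail, I would use the elementary inequality $(1+\delta)\log(1+\delta) - \delta \geq \delta^2/(2(1+\delta/3))$, which gives $H(1+\delta) \geq \delta^2/(2(1+\delta/3))$.

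Substituting $\delta^2 = \lambda^{-1/2}/4$ gives
\[
\lambda H(1\pm\delta) \geq \frac{\lambda^{1/2}}{8(1+\delta/3)}.
\]
This is at least $\lambda^{1/2}/9$ once $1+\delta/3 \leq 9/8$, that is, once $\delta \leq 3/8$, which holds for all large $\lambda$.

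The only mild obstacle is the upper tail, where the cubic term has the unfavourable sign. The inequality above shows it is absorbed by the slack between the constants $8$ and $9$.
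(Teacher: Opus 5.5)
Your proof is correct and takes essentially the same route as the source the paper cites for this lemma (Penrose, Section~1). There, one applies the Chernoff-type bounds $\exp(-\lambda H(k/\lambda))$ at $k/\lambda = 1 \pm \lambda^{-1/4}/2$ and uses that $H(1\pm\delta)$ is quadratic to leading order, giving $\lambda H(1\pm\delta) \approx \lambda^{1/2}/8 > \lambda^{1/2}/9$; your explicit bounds $H(1-\delta)\geq \delta^2/2$ and $H(1+\delta)\geq \delta^2/(2(1+\delta/3))$, and your rounding of non-integer thresholds via the monotonicity of $H$, simply make that asymptotic step rigorous.
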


For measurable functions on the product measurable space $\BN(\Rd) \times \BN(\Rd)$, we have the following expectation formula, which is a fundamental result in the Palm theory for Poisson processes:
\begin{lemma}
    Assume that $\lambda \in (0,\infty)$ and $j \in \N$.
    Let $h : \BN(\Rd) \times \BN(\Rd) \to [0,\infty]$ be a measurable function such that for $\CY, \CX \in \BN(\Rd)$, $h(\CY,\CX) = 0$ unless $\CY (\Rd) = j$.
    Then,
    \begin{equation*}
        \E\bigg[\sum_{\CY \subset \CP_{\lambda}} h(\CY, \CP_{\lambda})\bigg] = \frac{\lambda^j}{j!} \E[h(\CX_j', \CX_j' + \CP_{\lambda})]
    \end{equation*}
    holds, where $\CX_j'$ is an independent copy of $\CX_j$.
\end{lemma}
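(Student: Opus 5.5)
This is the Palm (Mecke) formula for a Poisson process $\CP_\lambda=\sum_{i=1}^{N_\lambda}\delta_{X_i}$. I would prove it directly from the construction, by conditioning on $N_\lambda$ and then resumming the Poisson weights.

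\emph{Conditioning on $N_\lambda=N$.} On this event the sum runs over the $j$-element subsets $\{i_1<\dots<i_j\}\subset\{1,\dots,N\}$. Since $\CP_\lambda$ is a.s.\ simple, each such subset gives the subconfiguration $\CY=\sum_{r}\delta_{X_{i_r}}$. The hypothesis on $h$ means only $j$-point subconfigurations contribute, so
\begin{equation*}
\E\bigg[\sum_{\CY\subset\CP_\lambda}h(\CY,\CP_\lambda)\,\bigg|\,N_\lambda=N\bigg]=\binom{N}{j}\E\Big[h\Big(\sum_{r=1}^j\delta_{X_r},\sum_{i=1}^N\delta_{X_i}\Big)\Big].
\end{equation*}
Here I use that the $X_i$ are i.i.d.\ and independent of $N_\lambda$, so exchangeability lets each subset be replaced by $\{1,\dots,j\}$. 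Writing $\sum_{i\le N}\delta_{X_i}=\CX_j'+\sum_{i=j+1}^N\delta_{X_i}$, the remainder is a binomial process with $N-j$ points, independent of $\CX_j'$.

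\emph{Resumming over $N$.} Multiplying by $e^{-\lambda}\lambda^N/N!$ and summing over $N\ge j$, I use
\begin{equation*}
e^{-\lambda}\frac{\lambda^N}{N!}\binom{N}{j}=\frac{\lambda^j}{j!}\,e^{-\lambda}\frac{\lambda^{N-j}}{(N-j)!}.
\end{equation*}
Setting $M=N-j$, the weights $e^{-\lambda}\lambda^M/M!$ are exactly the law of $N_\lambda$. Hence the remainder $\sum_{i=1}^{M}\delta_{X_{j+i}}$ with $M$ Poisson-distributed has the law of $\CP_\lambda$ and is independent of $\CX_j'$. This yields $\frac{\lambda^j}{j!}\E[h(\CX_j',\CX_j'+\CP_\lambda)]$.

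\emph{Justifying the exchanges.} Because $h\ge0$, Tonelli's theorem permits all interchanges of sums and expectations, including the case where the value is $+\infty$. Measurability of $\CY\mapsto$ the sum over subconfigurations follows from writing it through the indices as above.

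The only delicate point is the identification of subconfigurations with index subsets, which relies on the a.s.\ simplicity of $\CP_\lambda$. This holds because $f$ is a density, so $X_i\ne X_{i'}$ a.s.; apart from that the argument is routine bookkeeping.
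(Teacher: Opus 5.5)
Your proof is correct. It is the standard conditioning-on-$N_\lambda$ proof of this Palm (Mecke) formula from Penrose's monograph, to which the paper defers instead of giving its own proof. That argument, like yours, uses exchangeability to get $\binom{N}{j}\E[h(\CX_j,\CX_N)]$ and then absorbs $\binom{N}{j}$ into the shifted Poisson weights $\frac{\lambda^j}{j!}e^{-\lambda}\lambda^{N-j}/(N-j)!$. Your appeal to Tonelli also covers the $[0,\infty]$-valued $h$ allowed in the paper's formulation, whereas Penrose states the result for bounded $h$.
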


\section{Poisson approximation for the full series}\label{Appendix.02}
We state a lemma concerning the Poisson approximation for infinite sums of integer-valued random variables as $n \to \infty$, via truncation.
\begin{lemma}\label{Lemma.07}
    Suppose that $\{\xi_{j,n}\}_{j,n \in \N}$ is a collection of non-negative integer-valued random variables that satisfies
    \begin{enumerate}
        \item[(1)] for each $J \in \N$, $\beta_J := \lim_{n \to \infty} \E[\sum_{j \leq J} \xi_{j,n}]$ exists and $\beta_J \to \lim_{n \to \infty} \E[\sum_{j \in \N} \xi_{j,n}] =: \beta < \infty$ as $J \to \infty$,
        \item[(2)] for each $J \in \N$, it holds that $\sum_{j \leq J} \xi_{j,n} \xrightarrow{d} \mathrm{Po}(\beta_J)$ as $n \to \infty$.
    \end{enumerate}
    Then, it holds that $\sum_{j \in \N} \xi_{j,n} \xrightarrow{d} \mathrm{Po}(\beta)$ as $n \to \infty$.
\end{lemma}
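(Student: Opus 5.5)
Fix $K\in\N$ and set $S_J:=\sum_{j\le J}\xi_{j,n}$, $S:=\sum_{j\in\N}\xi_{j,n}$. The plan is a standard truncation (sandwich) argument comparing $\Prob[S=K]$ with $\Prob[S_J=K]$, and controlling the difference by the expected tail. Since all $\xi_{j,n}$ are non-negative integers, $S\ge S_J$, and $\{S\neq S_J\}=\{S-S_J\ge 1\}$. Markov's inequality therefore gives
\begin{equation*}
    \big|\Prob[S=K]-\Prob[S_J=K]\big| \le \Prob[S-S_J\ge 1] \le \E[S]-\E[S_J].
\end{equation*}
This holds whenever $\E[S]<\infty$, which is true for all large $n$ by hypothesis (1).

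Next, take $\limsup_{n\to\infty}$. By hypothesis (1), $\E[S]\to\beta$ and $\E[S_J]\to\beta_J$. By hypothesis (2), $\Prob[S_J=K]\to e^{-\beta_J}\beta_J^K/K!$. Combining these,
\begin{equation*}
    \limsup_{n\to\infty}\Big|\Prob[S=K]-e^{-\beta}\frac{\beta^K}{K!}\Big| \le (\beta-\beta_J) + \Big|e^{-\beta_J}\frac{\beta_J^K}{K!}-e^{-\beta}\frac{\beta^K}{K!}\Big|.
\end{equation*}

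Finally, let $J\to\infty$. Hypothesis (1) gives $\beta_J\to\beta$, so the first term vanishes. The second term vanishes by continuity of the Poisson probability mass function in its parameter. Hence $\Prob[S=K]\to\Prob[\mathrm{Po}(\beta)=K]$ for every $K$, and pointwise convergence of mass functions of integer-valued variables is equivalent to convergence in distribution.

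The only delicate point is the use of Markov's inequality on the tail $S-S_J$. It requires the expected tails to be finite and to converge, and this is exactly what hypothesis (1) provides. In particular, no uniform integrability argument is needed beyond convergence of the means. Note also that $\beta_J\le\beta$ automatically, since $S_J\le S$.
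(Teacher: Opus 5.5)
Your proposal is correct and follows the paper's truncation argument: the paper bounds $|\E[e^{itS_n}]-\E[e^{itS_n^J}]|\le 2\Prob[S_n-S_n^J>0]\le 2\E[S_n-S_n^J]$ for characteristic functions, and you apply the same Markov bound to the point probabilities $\Prob[S=K]$, then take $\limsup_{n\to\infty}$ and let $J\to\infty$ exactly as the paper does. Working with mass functions instead of characteristic functions is purely cosmetic; just make sure $K$ ranges over all non-negative integers, including $K=0$.
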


\begin{proof}
    Set $S_n := \sum_{j \in \N} \xi_{j,n}$, $S_n^J := \sum_{j \leq J} \xi_{j,n}$ and let $\varphi_n$ and $\varphi$ be the characteristic functions of $S_n$ and $\mathrm{Po}(\beta)$, respectively.
    Then, for $t \in \R$, it holds that
    \begin{equation*}
        |\varphi_n(t) - \varphi(t)| \leq |\E[\exp(itS_n)] - \E[\exp(itS_n^J)]| + |\E[\exp(itS_n^J)] - \exp(\beta(e^{it} - 1))|.
    \end{equation*}
    The first term on the right-hand side is bounded above by
    \begin{equation*}
        \E[|\exp(itS_n) - \exp(itS_n^J)|] = \E[|\exp(it(S_n-S_n^J)) - 1|].
    \end{equation*}
    Since this integrand vanishes whenever $S_n-S_n^J = 0$, it is bounded above by
    \begin{equation*}
        2\Prob[S_n-S_n^J > 0] \leq 2\E[S_n-S_n^J].
    \end{equation*}
    Thus, from Poisson approximation for $S_n^J$, it follows that
    \begin{equation*}
        \limsup_{n \to \infty} |\varphi_n(t) - \varphi(t)| \leq 2(\beta - \beta_J) + |\exp(\beta_J(e^{it} - 1)) - \exp(\beta(e^{it} - 1))|.
    \end{equation*}
    Taking the limit as $J \to \infty$, the right-hand side vanishes, which establishes the assertion.
\end{proof}

\section*{Acknowledgement}
The author would like to thank Professor Ryoki Fukushima for his continuous guidance and fruitful discussions.
He also carefully read the manuscript and provided insightful comments.

\bibliographystyle{plain}
\bibliography{RGG02}

@article {MR4216888,
    AUTHOR = {Chakraborty, Subrata and Chakravarty, Dhrubajyoti and
              Mazucheli, Josmar and Bertoli, Wesley},
     TITLE = {A discrete analog of {G}umbel distribution: properties,
              parameter estimation and applications},
   JOURNAL = {J. Appl. Stat.},
  FJOURNAL = {Journal of Applied Statistics},
    VOLUME = {48},
      YEAR = {2021},
    NUMBER = {4},
     PAGES = {712--737},
      ISSN = {0266-4763,1360-0532},
   MRCLASS = {99-01},
  MRNUMBER = {4216888},
       DOI = {10.1080/02664763.2020.1744538},
       URL = {https://doi.org/10.1080/02664763.2020.1744538},
}

@book {MR3642325,
    AUTHOR = {Kallenberg, Olav},
     TITLE = {Random measures, theory and applications},
    SERIES = {Probability Theory and Stochastic Modelling},
    VOLUME = {77},
 PUBLISHER = {Springer, Cham},
      YEAR = {2017},
     PAGES = {xiii+694},
      ISBN = {978-3-319-41596-3; 978-3-319-41598-7},
   MRCLASS = {60G55 (60G57 60J68)},
  MRNUMBER = {3642325},
MRREVIEWER = {Frank\ Aurzada},
       DOI = {10.1007/978-3-319-41598-7},
       URL = {https://doi.org/10.1007/978-3-319-41598-7},
}

@article {MR2501248,
    AUTHOR = {M\"uller, Tobias},
     TITLE = {Two-point concentration in random geometric graphs},
   JOURNAL = {Combinatorica},
  FJOURNAL = {Combinatorica. An International Journal on Combinatorics and
              the Theory of Computing},
    VOLUME = {28},
      YEAR = {2008},
    NUMBER = {5},
     PAGES = {529--545},
      ISSN = {0209-9683,1439-6912},
   MRCLASS = {05C80 (05C15 60C05 60D05)},
  MRNUMBER = {2501248},
       DOI = {10.1007/s00493-008-2283-3},
       URL = {https://doi.org/10.1007/s00493-008-2283-3},
}

@article{otsuka2026limit,
  title={Limit distributions of the threshold radius for the maximum degree and the associated point configurations in random geometric graphs},
  author={Otsuka, Junpei},
  journal={arXiv preprint arXiv:2604.24624},
  year={2026}
}

@book {MR1986198,
    AUTHOR = {Penrose, Mathew},
     TITLE = {Random geometric graphs},
    SERIES = {Oxford Studies in Probability},
    VOLUME = {5},
 PUBLISHER = {Oxford University Press, Oxford},
      YEAR = {2003},
     PAGES = {xiv+330},
      ISBN = {0-19-850626-0},
   MRCLASS = {60-02 (05C80 60D05)},
  MRNUMBER = {1986198},
MRREVIEWER = {Ilya\ S.\ Molchanov},
       DOI = {10.1093/acprof:oso/9780198506263.001.0001},
       URL = {https://doi.org/10.1093/acprof:oso/9780198506263.001.0001},
}

@article {MR4993794,
    AUTHOR = {Penrose, Mathew D. and Yang, Xiaochuan},
     TITLE = {Fluctuations of the connectivity threshold and largest
              nearest-neighbour link},
   JOURNAL = {Ann. Appl. Probab.},
  FJOURNAL = {The Annals of Applied Probability},
    VOLUME = {35},
      YEAR = {2025},
    NUMBER = {6},
     PAGES = {3906--3941},
      ISSN = {1050-5164,2168-8737},
   MRCLASS = {60D05 (05C62 60F05 60G70)},
  MRNUMBER = {4993794},
MRREVIEWER = {D.\ Yogeshwaran},
       DOI = {10.1214/25-AAP2210},
       URL = {https://doi-org.tsukuba.idm.oclc.org/10.1214/25-AAP2210},
}
\end{document}